\definecolor{aliceblue}{rgb}{0.94, 0.97, 1.0}
\definecolor{apricot}{rgb}{0.98, 0.81, 0.69}
\definecolor{antiquewhite}{rgb}{0.98, 0.92, 0.84}
\newcommandx{\info}[2][1=]{\todo[linecolor=blue,backgroundcolor=aliceblue,bordercolor=blue,#1]{#2}}
\definecolor{dgreen}{rgb}{0.,0.45,0.}
\definecolor{dblue}{rgb}{0.,0.,0.55}
\newcommand{\dgt}[1]{\textcolor{black}{#1}}
\newcommand{\dbt}[1]{\textcolor{black}{#1}}
\newcounter{counter}
\theoremstyle{plain}
\newtheorem{thm}[counter]{Theorem}
\newtheorem{lem}[counter]{Lemma}
\newtheorem{cor}[counter]{Corollary}
\newtheorem{prop}[counter]{Proposition}
\theoremstyle{definition}
\newtheorem{example}[counter]{Example}
\newtheorem{defn}[counter]{Definition}
\newtheorem{rem}[counter]{Remark}
\newcommand{\mull}{\textbf{m}}
\newcommand{\diag}{\text{diag}}
\newcommand{\loc}{\text{loc}}
\newcommand{\corr}{\text{corr}}
\newcommand{\cov}{\text{Cov}}
\newcommand{\var}{\text{Var}}
\newcommand{\amsvect}{%
  \mathpalette {\overarrow@\vectfill@}}
\def\vectfill@{\arrowfill@\relbar\relbar{\raisebox{-3.81pt}[\p@][\p@]{$\mathord\mathchar"017E$}}}
\newcommand{\amsvectb}{%
  \mathpalette {\overarrow@\vectfillb@}}
  \newcommand{\vecbar}{%
  \scalebox{0.8}{$\relbar$}}
\def\vectfillb@{\arrowfill@\vecbar\vecbar{\raisebox{-4.35pt}[\p@][\p@]{$\mathord\mathchar"017E$}}}
\newcommand{\fak}{\mathbf{k}}
\newcommand\definealphabetloop[3]{%
  \ifx\relax#3\expandafter\@gobble\else\expandafter\@firstofone\fi
  {\expandafter\providecommand\expandafter*\csname#1#3\endcsname{#2{#3}}%
   \definealphabetloop{#1}{#2}}%
}%
\newcommand\definealphabet[2]{%
  \definealphabetloop{#1}{#2}abcdefghijklmnopqrstuvwxyzABCDEFGHIJKLMNOPQRSTUVWXYZ\relax
}%
\renewcommand{\set}[1]{\left\{#1\right\}}
\renewcommand{\norm}[1]{\left\|#1\right\|}
\newlength{\dhatheight}
\newcommand{\doublehat}[1]{%
    \settoheight{\dhatheight}{\ensuremath{\hat{#1}}}%
    \addtolength{\dhatheight}{-0.35ex}%
    \hat{\vphantom{\rule{1pt}{\dhatheight}}%
    \smash{\hat{#1}}}}
\begin{document}

\title{Moments of Generalized Fractional Polynomial Processes}
\date{\today}

\author{Johannes Assefa}
\address{Department of Mathematical Stochastics\\Dresden Technical University}
\email{johannes.assefa@tu-dresden.de}
\author{Martin Keller-Ressel}
\address{Department of Mathematical Stochastics\\Dresden Technical University}
\email{martin.keller-ressel@tu-dresden.de}

%\subjclass[2010]{}

\begin{abstract}
We derive a moment formula for generalized fractional polynomial processes, i.e., for polynomial-preserving Markov processes time-changed by an inverse L\'evy-subordinator. 
If the time change is inverse $\alpha$-stable, the time-derivative of the Kolmogorov backward equation is replaced by a Caputo fractional derivative of order $\alpha$, and we demonstrate that moments of such processes are computable, in a closed form, using matrix Mittag-Leffler functions. The same holds true for cross-moments in equilibrium, generalizing results of Leonenko, Meerschaert and Sikorskii from the one-dimensional diffusive case of second-order moments to the multivariate, jump-diffusive case of moments of arbitrary order. We show that also in this more general setting, fractional polynomial processes exhibit long-range dependence, with correlations decaying as a power law with exponent $\alpha$.
\end{abstract}

\maketitle

\tableofcontents

\section{Introduction}
   Polynomial processes, which have been extensively studied in both finite and infinite-dimensional settings \cite{Cuchiero12,Filipovic16, Filipovic17, Filipovic19,Cuchiero21,Guida21}, are particularly notable for their tractable moment calculations. Specifically, their conditional moments can be expressed through matrix exponentials, rendering moment calculations for these processes computationally efficient. \dgt{This makes them particularly well-suited to model problems in finance such as asset pricing and equilibrium analysis \cite{Filipovic17}, many of which are naturally described by jump diffusion models that fall within the framework of polynomial processes \cite{Cuchiero12}}. \par 
\dgt{Jump diffusion processes can be derived as scaling limits of continuous-time random walks with exponentially distributed waiting times between successive jumps. If the waiting times are general random times, the corresponding scaling limit is a time-changed jump diffusion, often referred to as anomalous diffusion \cite{Nane11,Meerschaert13,Orsingher18,Leonenko13}.} 
%In extending the framework of jump diffusions to anomalous diffusion, inverse subordination serves as an effective method to transform regular diffusion processes into subdiffusions and superdiffusions 
Time-changed jump diffusions are goverened by generalized time-fractional equations and hence termed generalized fractional jump-diffusions. In particular, for a Markov process $(X_t)_{t\geq0}$ 
with infinitesimal generator $(\cG,\cD(\cG))$ and a subordinator $\sigma^f=(\sigma^f_s)_{s\geq0}$ with Laplace exponent $f$ and hitting time process $L_t$, $t\geq0$, the average behaviour of the time-changed process, $(t,x)\mapsto\bE_x[u(X_{L_t})]$, is governed by time-fractional equations of the form 
\begin{align*}
	\begin{cases}
		\bD^f_tq(t,x)=\cG q(t,x),\quad0<t<\infty,\\
		 q(0,\cdot)=u\in\cD(\cG),
	\end{cases}
\end{align*}
where $\bD^f_t$ is the generalized $f$-Caputo fractional derivative defined in (\ref{def:caputo derivative}), 
introduced in \cite{Chen17}. 
In the special case where $\sigma^f$ is $\alpha$-stable with $f(\lambda)=\lambda^\alpha$ and $\alpha\in(0,1)$, one obtains the (conventional) Caputo fractional derivative 
\begin{align*}
		\bD^f_tq(t)
		%=\bD^\alpha_tq(t)
		=\frac{1}{\Gamma(1-\alpha)}\frac{d}{dt}\int_0^t(t-s)^{-\alpha}(q(s)-q(0))\,ds,\quad0<t<\infty,
	\end{align*}
and the time-changed process $(X_{L_t})_{t\geq0}$ is known as a fractional jump-diffusion.
Generalized fractional jump-diffusions are particularly useful for capturing periods of motionlessness interspersed with diffusive periods with applications spanning from biology to finance, see \cite{Sikorskii12} for full details. \par 
\dbt{Here, we consider the case when the original process $(X_t)_{t\geq0}$ is a polynomial process.} Existing literature on polynomial processes relies heavily on their Markov property or on their characterization as Ito semimartingales \cite{Cuchiero12, Filipovic16, Filipovic17}. However, when polynomial processes are subjected to an inverse subordinator, these properties are lost. %their Markov property. The resulting processes, known in specific cases as semi-Markov processes, differ from standard Markov processes in that, in the case of stepped processes, the holding times can follow any distribution, leading to memory effects in the transitions between states \cite{Gikhman04,Orsingher18}. 
%While the class of polynomial processes is closed under independent time change by a L\'evy subordinator \cite[Section 6]{Filipovic17} this does not longer preservation does not extend to the non-stationary case of inverse subordination. 
%Consequently, the loss of Markovian techniques, combined with the failure of polynomial preservation, leaves the analytic description of moments for generalized fractional polynomial processes as an open problem.  \par 
Hence, it is unclear, whether the tractable moment formulas for polynomial processes in the Markov setting can be transferred to the non-Markovian setting of inverse subordination. In this paper, we address this gap, by demonstrating that these inverse time-changed polynomial processes retain their polynomial-preserving structure, in the following sense: in \Cref{sec:inverse subordinated polynomial processes} we show that 
%they are governed by the generalized fractional Kolmogorov backward equation 
the moments of generalized fractional polynomial processes can be determined as solutions to a \dgt{finite-dimensional} linear fractional differential equations associated with the underlying process, which generally takes the form
   \begin{align*}
	\begin{cases}
		\frac{d}{dt}\left(bq(t)+\int_0^tw(t-s)(q(s)-q(0))\,ds\right)=Aq(t),\quad 0<t<\infty,\\
		q(0)=u\in\bR^N,
	\end{cases}
   \end{align*}
   where $b\geq0$, $A\in\bR^{N\times N}$, $q(t)\in\bR^N$ for each $t\geq0$, and, $t\mapsto w(t)$ is a non-negative decreasing function on $(0,\infty)$ with a singularity at $t=0$. Specifically, if the subordinator is $\alpha$-stable, $\alpha\in(0,1)$, moments of the corresponding fractional polynomial process are obtained as solutions to 
    \begin{align*}
    	\begin{cases}
		\frac{1}{\Gamma(1-\alpha)}\frac{d}{dt}\int_0^t(t-s)^{-\alpha}(q(s)-q(0))\,ds=Aq(t),\quad 0<t<\infty,\\
		q(0)=u\in\bR^N,
	\end{cases}
    \end{align*}
    where $\Gamma(\lambda)\coloneqq \int_0^\infty t^{\lambda-1}e^{-t}\,dt$ is the Gamma function. In this case, in \Cref{sec:moment formula}, analytical solutions are provided in terms of Mittag-Leffler functions with matrix arguments, which allow to derive  easy to calculate expressions for 
    \begin{align*}
    	(t,x)\mapsto\bE_x[p(X_{L_t})],
    \end{align*}
    where $p$ is a polynomial, $X$ is a polynomial process, and $L_t$, $t\geq0$ is the inverse $\alpha$-stable subordinator.
    In \Cref{sec:cross moments}, we provide a cross-moment formula in terms of Mittag-Leffler functions for fractional polynomial processes in equilibrium. In particular, we derive analytically tractable expressions for
    \begin{align*}
    	(r,s)\mapsto\bE_\mu[p(X_{L_{r+s}})q(X_{L_r})],
    \end{align*}
    where $p$ and $q$ are polynomials, $X$ is a polynomial process, and $\mu$ is an \emph{$m$-limiting} distribution ($m\in\bN$), a weaker notion of equilibrium that describes the limiting moments of $X$ up to order $m$, without requiring convergence to a unique limiting distribution.    
     Among other applications, this facilitates the calculation of the (auto\nobreakdash)\-correlation function of the respective process as demonstrated in \Cref{sec:correlation}.   \par 
   We end this introduction with some conventions that will be used throughout this paper. 
   In what follows, $S$ is a closed subset of $\bR^d$ and $\cS$ denotes its Borel $\sigma$-algebra. \dgt{By $\cP_n(S)$ we denote the finite-dimensional vector space of polynomials up to degree $n\geq0$ on $S\subseteq\bR^d$ defined by
\[
\cP_n(S)=\left\{S\ni x\mapsto\sum_{\lvert k\rvert=0}^n\alpha_\fak x^\mathbf{k}\bigg|\alpha_\fak\in\bR\right\},
\]
where we use multi-index notation $\fak=(k_1,\dots,k_d)\in\bN^d_0$, $\lvert\fak\rvert=k_1+\cdots k_d$, and $x^\fak=x_1^{k_1}\cdots x_d^{k_d}$.
%add cemetary point?
We set $N=\dim\cP_n(S)$ and note that in general $N$ could depend on $S$: if $S=\{x\}$ for some $x\in\bR^d$ then $N=1$ and for $S=\bR^d$ a simple combinatorial argument shows that $N=\binom{d+n}{n}$.
In the further course of this work we assume that $S$ has nonempty interior. In this case $\cP_n(S)$ can be identified with the restriction of polynomials in $\cP_n(\bR^d)$ to the set $S$ and we write $\cP_n\coloneqq\cP_n(S)$.} Then each $p\in\cP_n$ has a representation 
	\begin{align*}
		p(x)=H(x)^\top\vec{p},\quad x\in S,
	\end{align*}
	where $\vec{p}\in\bR^N$ and $H(x)^\top=(h_1(x),\dots h_N(x))$ denotes a basis of $\cP_n$, e.g. the monomial basis in graded lexicographic ordering \cite{Cox15}.
	\dgt{Given $q \in \cP_m$, we introduce the \emph{multiplication mapping}
\begin{align}\label{def:mul}
	\mull_q \colon\cP_n\to \cP_{n+m}, \quad p \mapsto pq.
%	\quad (p,q)
%	\mapsto 
%	\left(x\mapsto\sum_{\lvert \mathbf{k}\rvert=0}^{2n}x^\mathbf{k}\sum_{\mathbf{i}+\mathbf{j}=\mathbf{k}}\alpha_{\mathbf{i}}\beta_\mathbf{j}\right).
\end{align}
As a linear mapping, $\mull_q$ has a matrix representation in a basis of $\cP_{n+m}$ which we denote by $M_q$. For example, for $q(x,y) = x + 2y - 1$ and $n=m=1$, the basis representation of $M_q$ in $H(x,y)=(1, x, y, x^2, xy, y^2)$ is given by
\begin{align}\label{eq:multiplication map}
M_q = \begin{pmatrix}-1 & 1 & 2 & 0 & 0 & 0\\ 0 & -1 & 0 & 1 & 2 &  0 \\ 0 & 0 & -1 & 0 & 1 & 2 \end{pmatrix}^\top.
\end{align}}

\section{Preliminaries}
In this section we collect some technical information which will be used throughout the paper.

\subsection{Functions and Laplace transforms evaluated at matrices}\label{sec:matrix}
\begin{defn} Let $A\in\bR^{n\times n}$ be a quadratic matrix with eigenvalues $\xi_1,\dots,\xi_m$. 
\begin{enumerate}[(i)]
\item We call 
\begin{align*}\label{eq:pi}
	\pi(A) = \max\{\Re\xi_i\colon \xi_1,\dots\xi_m\}
\end{align*}
the \emph{index of stability} of $A$. 
\item We say that $A$ is zero-stable, if $\pi(A) = 0$, $\xi = 0$ is the only eigenvalue with $\Re\,\xi = 0$, and $\xi = 0$ is a simple eigenvalue. 
\end{enumerate}
\end{defn}
From standard results on stability of linear ODEs \cite[Ch.~1.9]{Perko13} we have the following:
\begin{lem}\label{lem:A_estimate}Let $A\in\bR^{n\times n}$ be a quadratic matrix. 
\begin{enumerate}[(i)]
\item For any $\epsilon > 0$ there exists $M_\epsilon > 0$ such that $\norm{e^{At}} \le M_\epsilon e^{t(\pi(A) + \epsilon)}$ for all $t \ge 0$.
\item If $A$ is zero-stable, then there exists $M> 0$ such that $\norm{e^{At}} \le M$ for all $t \ge 0$.
\end{enumerate}
\end{lem}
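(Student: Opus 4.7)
The plan is to reduce both claims to the Jordan canonical form of $A$. Writing $A = PJP^{-1}$, where $J = \mathrm{diag}(J_1,\dots,J_r)$ is block-diagonal with Jordan blocks $J_k$ of size $s_k$ associated to eigenvalues $\xi_{i(k)} \in \{\xi_1,\dots,\xi_m\}$, the matrix exponential inherits this structure, so $e^{At} = P\,\mathrm{diag}(e^{J_1 t},\dots,e^{J_r t})\,P^{-1}$. In any submultiplicative norm this gives $\norm{e^{At}} \le \norm{P}\norm{P^{-1}} \max_k \norm{e^{J_k t}}$, and the task reduces to bounding each block.

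For part (i), decompose $J_k = \xi_{i(k)} I + N_k$ where $N_k$ is nilpotent of index at most $s_k$. Since the two summands commute,
\[
e^{J_k t} = e^{\xi_{i(k)} t}\sum_{j=0}^{s_k-1}\frac{t^j}{j!}\,N_k^j,
\]
so $\norm{e^{J_k t}} \le e^{t \Re \xi_{i(k)}} Q_k(t)$ for a polynomial $Q_k$ of degree at most $s_k - 1$ with non-negative coefficients. For every $\epsilon > 0$, polynomial growth is dominated by exponential growth, i.e.\ $Q_k(t) \le C_{k,\epsilon} e^{\epsilon t}$ on $[0,\infty)$. Taking the maximum over the finitely many blocks and using $\Re \xi_{i(k)} \le \pi(A)$ yields the desired estimate with $M_\epsilon = \norm{P}\norm{P^{-1}} \max_k C_{k,\epsilon}$.

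For part (ii), zero-stability says $\pi(A)=0$, the only eigenvalue on the imaginary axis is $0$, and $0$ is simple. Simplicity means algebraic multiplicity $1$, and since the sizes of all Jordan blocks with eigenvalue $0$ sum to the algebraic multiplicity of $0$, the unique such block is $1\times 1$, so $e^{J_k t}=1$ on that block. Every remaining block has eigenvalue with strictly negative real part; choosing $\delta>0$ less than $\min_{\xi_{i(k)}\neq 0}\lvert\Re\xi_{i(k)}\rvert$ and applying the block-wise estimate from part (i) with $\epsilon = \delta/2$, each such block satisfies $\norm{e^{J_k t}} \le C_k\, e^{-\delta t/2}$, which is uniformly bounded on $[0,\infty)$. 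Combining the block bounds and the conjugation by $P$ gives a uniform constant $M$.

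There is no real conceptual obstacle here; the work is purely bookkeeping. The only point that requires any care is the implication that a \emph{simple} zero eigenvalue rules out a non-trivial Jordan block at $0$—without this, the $1\times 1$ reduction fails and the estimate in (ii) picks up a spurious polynomial factor in $t$. Once the Jordan structure is correctly read off from the definition of zero-stability, both parts are routine.
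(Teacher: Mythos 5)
Your proof is correct. The paper does not actually prove this lemma---it simply cites standard results on stability of linear ODEs (Perko, Ch.~1.9)---and your Jordan-form argument, including the key observation that a simple zero eigenvalue forces a $1\times 1$ Jordan block in part (ii), is exactly the standard proof underlying that citation.
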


Let $f: \bC \to \bR$ be a real function. Let $A = Q^{-1} J Q$ be the Jordan normal form of $A$ and let $J_k(\xi)$ be a Jordan block associated to eigenvalue $\xi$. Following \cite{Higham08}, the function $f$ can be extended to the matrix argument $A$ by setting 
\[f(A) = Q f(J) Q^{-1} = Q \diag(f(J_1(\xi_1)), \dotsc, f(J_l(\xi_m))) Q^{-1},\qquad l \le m\]
where
\begin{equation}\label{eq:f_jordan}
f(J_k(\xi)) = \begin{pmatrix} 
f(\xi) & f'(\xi) & \dotsm & \frac{f^{(m_k - 1)}(\xi)}{(m_k - 1)!} \\
 & f(\xi) & \ddots & \vdots \\
 & & \ddots & f'(\xi)\\
 & & & f(\xi)
\end{pmatrix}
\end{equation}
with $m_k$ the size of the Jordan block $J_k$.
The value of $f(A)$ is well-defined, if the derivatives of $f$ appearing in \eqref{eq:f_jordan} exist for all Jordan blocks of $A$. In this case, it is said that \emph{$f$ is defined on the spectrum of $A$}. This definition of $f(A)$ is equivalent to several other possible definitions of $f(A)$, e.g. through polynomial interpolation or Cauchy integrals \cite{Higham08}. \dgt{In the special case where $A$ is diagonalisable this reduces to 
\begin{align*}
	f(A)=Q^{-1}\diag(f(\xi_1),\dots,f(\xi_m))Q,
\end{align*}
consistent with the usual spectral calculus used for normal matrices $A$.}
If $f$ \dgt{is real analytic at a point $x_0$}, this definition also coincides with the result of `plugging $A$' into the Taylor expansion; for $f =\exp$, in particular, it coincides with the usual matrix exponential. 

Next, we review some basic results on Laplace transforms. Let $m: [0,\infty) \to \bC$ be a function of locally finite variation with $m(\infty) < \infty$ and let 
\begin{equation}\label{eq:Laplace}
\hat m(\lambda)  = \mathcal{L}[m](\lambda) = \int_0^\infty e^{-\lambda t}dm(t)
\end{equation}
denote its Laplace transform with complex argument $\lambda$. Following \cite[Ch.~II]{Widder46} there exists $\zeta_c: -\infty \le \zeta_c \le 0$, called \emph{abscissa of convergence}, such that \eqref{eq:Laplace} converges absolutely for all $\lambda \in \mathbb{H}_{\zeta_c}^+ = \set{z \in \bC: \Re z > \zeta_c}$ and diverges for all $\lambda \in \mathbb{H}_{\zeta_c}^- = \set{z \in \bC: \Re z < \zeta_c}$. Moreover, the function $\hat m$ is analytic in $\mathbb{H}_{\zeta_c}^+$. 

Finally, we show the following result on Laplace transforms evaluated at matrix arguments:
\begin{lem} \label{lem:matrix_laplace} Let $\hat m$ be the Laplace transform of $m$ with abscissa of convergence $\zeta_c \le 0$, and let $A \in \bR^{N \times N}$. If any of the following conditions holds:
\begin{enumerate}[(i)]
\item $\pi(A) + \zeta_c < 0$,
\item $\pi(A) < 0$,
\item $A$ is zero-stable,
\end{enumerate}
then $\hat m$ is defined on the spectrum of $A$ and 
\begin{equation}\label{eq:Laplace_at_A}
\int_0^\infty e^{At} dm(t) = \hat m(-A).
\end{equation}
\end{lem}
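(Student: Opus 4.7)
My plan is to reduce the identity to the Jordan blocks of $A$, compute both sides explicitly on each block, and check convergence of the resulting scalar integrals under each of the three hypotheses.

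First I would write $A = Q J Q^{-1}$ in Jordan normal form. Since $e^{At} = Q e^{Jt} Q^{-1}$ and the matrix-function $\hat m(-A)$ is also defined block-wise via \eqref{eq:f_jordan}, it suffices to prove
\[\int_0^\infty e^{J_k(\xi)\, t}\, dm(t) = \hat m(-J_k(\xi))\]
for a single Jordan block $J_k(\xi) = \xi I + N$ of size $m_k$, where $N$ is the nilpotent with $1$'s on the superdiagonal. Writing $e^{J_k(\xi)t} = e^{\xi t}\sum_{j=0}^{m_k-1} \tfrac{t^j}{j!} N^j$, the integral becomes the upper triangular Toeplitz matrix whose $j$-th superdiagonal entry equals $\tfrac{1}{j!}\int_0^\infty t^j e^{\xi t}\, dm(t)$, provided these scalar integrals converge.

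The next step is to check convergence of these scalar integrals under each of (i)--(iii), and to identify them with derivatives of $\hat m$. Whenever $\Re(-\xi) > \zeta_c$, $\hat m$ is analytic at $-\xi$, so differentiation under the integral yields
\[\hat m^{(j)}(-\xi) = (-1)^j \int_0^\infty t^j e^{\xi t}\, dm(t).\]
Under (i), every eigenvalue $\xi$ of $A$ satisfies $\Re\xi \le \pi(A) < -\zeta_c$, so this applies directly; and (ii) is a special case of (i) because $\zeta_c \le 0$. Under (iii), the eigenvalues $\xi \neq 0$ of $A$ all have $\Re\xi < 0$, so again $\Re(-\xi) > 0 \ge \zeta_c$ and the argument goes through. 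The only delicate point is the simple eigenvalue $\xi = 0$ in case (iii) when $\zeta_c = 0$, where $\hat m$ need not be analytic; however, simplicity forces $m_k = 1$, so only $j = 0$ is required, and the integral $\int_0^\infty dm(t) = m(\infty) - m(0)$ is finite by hypothesis, which furnishes the value $\hat m(0)$.

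Plugging these evaluations back in, the $j$-th superdiagonal of $\int_0^\infty e^{J_k(\xi)t}\, dm(t)$ equals $\tfrac{(-1)^j}{j!}\hat m^{(j)}(-\xi)$. On the other side, applying the definition \eqref{eq:f_jordan} to $-J_k(\xi) = -\xi I - N$ (or, equivalently, expanding the Hermite interpolating polynomial of $\hat m$ around $-\xi$) produces exactly the same Toeplitz matrix, since the extra sign from $(-N)^j$ cancels with the sign coming from evaluating $\hat m^{(j)}$ at $-\xi$ vs.\ $\xi$. Thus the identity holds block-wise, and reassembling gives $\int_0^\infty e^{At}\,dm(t) = Q\,\hat m(-J)\,Q^{-1} = \hat m(-A)$.

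The main obstacle is the zero-stable case (iii) with abscissa $\zeta_c = 0$: there the eigenvalue $0$ of $A$ sits on the boundary of the half-plane of convergence, and the usual differentiation-under-the-integral argument breaks down. The rescue is structural rather than analytic, using the simplicity of the eigenvalue $0$ to reduce the requirement to the mere existence of $\hat m(0)$, guaranteed by $m(\infty) < \infty$.
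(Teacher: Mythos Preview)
Your proof is correct and follows essentially the same route as the paper: reduction to Jordan blocks, identification of the scalar integrals with derivatives of $\hat m$ via analyticity in $\Re\lambda > \zeta_c$, the observation that (ii) is a special case of (i), and the structural handling of the simple eigenvalue $0$ in the zero-stable case via $m(\infty)<\infty$. The only cosmetic difference is that the paper first establishes convergence of the full matrix integral through the norm estimate $\|e^{At}\|\le M_\epsilon e^{t(\pi(A)+\epsilon)}$ before passing to Jordan blocks, whereas you go to blocks immediately and check convergence at the scalar level; your sign bookkeeping for $\hat m(-J_k(\xi))$ is also more explicit (and in fact cleaner) than the paper's.
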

\begin{proof}
Assume (i). Then there exists $\epsilon > 0$ such that $\zeta_c < -(\pi(A) + \epsilon)$. To establish convergence, we use \Cref{lem:A_estimate} to estimate
\[\int_0^\infty \norm{e^{At}} dm(t) \le M_\epsilon \int_0^\infty e^{(\pi(A) + \epsilon)t} dm(t).\]
Since $-(\pi(A) + \epsilon)$ is larger than the abscissa of convergence $\zeta_c$ the right hand side is finite. To show the equality \eqref{eq:Laplace_at_A}, let $A = Q^{-1} J Q$ be the canonical Jordan form of $A$. Multiplying \eqref{eq:Laplace_at_A} with $Q^{-1}$ from the left and with $Q$ from the right, it is sufficient to show \eqref{eq:Laplace_at_A} for all Jordan blocks of $A$, i.e. to show
\[\int_0^\infty e^{J_k(\xi) t} dm(t) = \hat m(-J_k(\xi)),\]
for every eigenvalue $\xi \in \set{\xi_1,\dots, \xi_m}$ and associated Jordan block $J_k(\xi)$. Now $-\Re \xi \ge - \pi(A) > \zeta_c$, i.e. $\hat m$ is analytic in a neighborhood of $-\xi$. Hence, by \cite[Thm.~II.5a]{Widder46}, $\hat m^{(j)}(\xi)$ exists for any $j \in \bN_0$, and 
\[\hat m^{(j)}(-\xi) 
%= (-\xi)^j \hat m(-\xi) 
= \int_0^\infty (-t)^j e^{-\xi t} dm(t),\]
showing, element-by-element, the equality \eqref{eq:Laplace_at_A} for each Jordan block.
To show the claim under assumption (ii) it is sufficient to note that (ii) implies (i), because $\zeta_c \le 0$. To show the claim under 
assumption (iii), note that convergence of the integral can be established under the simpler estimate 
\[\int_0^\infty \norm{e^{At}} dm(t) \le M \left(m(\infty) - m(0) \right).\]
As for the equality \eqref{eq:Laplace_at_A}, the same argument as above applies to every eigenvalue $\xi$ with $\Re \xi < 0$. For the remaining eigenvalue $\xi = 0$, which must be simple, there is a single associated Jordan block of size $1$. For this block \eqref{eq:Laplace_at_A} becomes the trivial identity
\[\int_0^\infty  e^0 dm(t) = \hat m(0). \qedhere\]
\end{proof}

\subsection{Bernstein functions and inverse subordinators}\label{sec:bernstein functions}
 
A function $f\colon(0,\infty)\to[0,\infty)$ is said to be Bernstein if it has derivatives of all orders and 
\begin{align*}
	(-1)^{n-1}f^{(n)}(x)\geq0,\quad\forall x>0,\ \forall n \in \bN.
\end{align*}
In this case, we will write $f\in\cB\cF$. Moreover, \cite[Theorem 3.2]{Schilling10} states that $f\in\cB\cF$ if and only if 
\begin{align}\label{eq:bernstein function}
	f(x)=a+bx+\int_0^\infty(1-e^{-sx})\,\nu(ds),\quad x>0,
\end{align}
where $a,b\geq0$ and $\nu$ is a non-negative measure on $(0,\infty)$ with tail $s\mapsto\bar\nu(s)=a+\nu(s,\infty)$ satisfying the integrability criterion 
\begin{align}\label{con:levy measure integrability}
	\int_0^\infty(s\wedge1)\,\nu(ds)<\infty.
\end{align} 
Finally, we remark that any $f \in \cB\cF$ has an analytic extension onto the right complex half-plane $\bH_0^+=\{\lambda\in\bC\colon\Re\lambda>0\}$, such that it can be evaluated at any $\lambda \in \bC$ with $\Re \lambda > 0$.  
\par 
A \emph{subordinator} is a non-decreasing L\'evy process without killing. It is well known that a function $f$ is Bernstein with coefficients $b$, $\nu$, and $a = 0$ in \eqref{eq:bernstein function} if and only if it is the Laplace exponent of a subordinator with L\'evy triplet $(b, 0,\nu)$ \cite[Theorem 1.2]{Bertoin99}. 
In the course of this work $\sigma^f=(\sigma^f_s)_{s\geq0}$ denotes a subordinator with triplet $(b, 0,\nu)$, i.e. 
\begin{align*}
	\bE[e^{-\lambda\sigma^f_s}]=e^{-sf(\lambda)},\quad s,\lambda>0,
\end{align*}
where $f$ has representation (\ref{eq:bernstein function}). We define the inverse of $\sigma^f$ as the hitting time process
	\begin{align}\label{eq:inverse subordinator}
		L^f_t=\inf\{s>0\mid \sigma^f_s> t\},\quad t\geq0,
	\end{align} 
which, if there is no ambiguity with respect to $f$, will be denoted by $L_t$, $t\geq0$. 
Let $l_t(ds)=\bP(L_t\in ds)$ denote the distribution of $L_t$ and let $g_s(dt)=\bP(\sigma^f_s\in dt)$ denote the transition probability of $\sigma^f_s$, $s,t\geq0$. 

%We call an integrable function $t\mapsto \psi(t)$ of \emph{exponential order} if there exists $c,M >0$ such that $\lvert \psi(t)\rvert\leq Me^{c t}$ for all $t>T>0$. In this case, its Laplace transform $\hat\psi$ is well defined and given as
%\begin{align*}
%	\hat\psi(\lambda)=\cL[\psi(\cdot)](\lambda)=\int_0^\infty e^{-s\lambda}\psi(s)\,ds,\quad \Re\lambda>c.
%\end{align*}

\begin{prop}\label{prop:double laplace transform l}
		Let $f\in\cB\cF$ with triplet $(b,0,\nu)$ where $b\geq0$, $\nu(0,\infty)=\infty$, and $s\mapsto\bar\nu(s)=\nu(s,\infty)$ is absolutely continuous on $(0,\infty)$. 
		\begin{enumerate}[(i)]
			\item The distribution $l_t$, $t\geq0,$ has a density such that $l_t(ds)=l_t(s)\,ds$ and $l_t(s)=bg_s(t)+(\bar\nu*g_s)(t)$ where (with slight abuse of notation) $l_t(s)$ and $g_s(t)$ denote the densities of $l_t(ds)$ and $g_s(dt)$, respectively. Furthermore, for any $s\geq0$ the Laplace transform of $t \mapsto l_t(ds)$ is given by 
			\begin{align*}
				\hat l_\lambda(s)
				=\cL[t \mapsto l_t(s)](\lambda)
				=\frac{f(\lambda)}{\lambda}e^{-sf(\lambda)}.
			\end{align*}
			\item Let $A\in\bR^{n\times n}$ with index of stability $\pi(A)$ and set $c = f^{-1}(\max(\pi(A),0) \ge 0$. Then, for any $\lambda \in \bC$ with $\Re \lambda > c$ 
		\begin{align*}%\label{eq: Bernstein justified}
\cL[s \mapsto \hat l_\lambda(s)](-A) = \int_0^\infty e^{sA} \hat l_\lambda(s) ds 
			=\frac{f(\lambda)}{\lambda(f(\lambda)I-A)}. 
		\end{align*}
		\end{enumerate}
	\end{prop}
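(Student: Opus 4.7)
For part (i), my starting point is the standard hitting-time duality $\{L_t^f>s\}=\{\sigma_s^f<t\}$, which holds almost surely because the assumption $\nu(0,\infty)=\infty$ forces $\sigma^f$ to have strictly increasing paths and the absolute continuity of $\bar\nu$ guarantees the existence of a transition density $g_s$ of $\sigma^f_s$ for each $s>0$. Taking expectations, applying the Laplace transform in $t$ and using Fubini gives
\[\int_0^\infty e^{-\lambda t}\bP(L^f_t>s)\,dt=\int_0^\infty g_s(u)\int_u^\infty e^{-\lambda t}\,dt\,du=\frac{1}{\lambda}e^{-sf(\lambda)},\]
where I have used $\cL[g_s](\lambda)=e^{-sf(\lambda)}$. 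Differentiating in $s$ (equivalently, noting that the expression above equals $\int_0^\infty e^{-\lambda t}\int_s^\infty l_t(u)du\,dt$) yields the claimed identity $\hat l_\lambda(s)=\frac{f(\lambda)}{\lambda}e^{-sf(\lambda)}$. For the convolution representation, I would integrate by parts in the Bernstein integral to obtain $f(\lambda)/\lambda=b+\cL[\bar\nu](\lambda)$; multiplying by $e^{-sf(\lambda)}=\cL[g_s](\lambda)$ and invoking the convolution rule for Laplace transforms gives $\hat l_\lambda(s)=\cL[bg_s+\bar\nu\ast g_s](\lambda)$, and Laplace inversion yields $l_t(s)=bg_s(t)+(\bar\nu\ast g_s)(t)$.

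For part (ii), the plan is to apply \Cref{lem:matrix_laplace} to the function $m(s)=\int_0^s\hat l_\lambda(u)\,du$, so that $dm(s)=\hat l_\lambda(s)\,ds$. Using the closed form from (i), the scalar Laplace transform is
\[\hat m(\mu)=\int_0^\infty e^{-\mu s}\hat l_\lambda(s)\,ds=\frac{f(\lambda)}{\lambda(\mu+f(\lambda))},\]
whose abscissa of convergence equals $\zeta_c=-\Re f(\lambda)$. To verify condition (i) of \Cref{lem:matrix_laplace} I need $\pi(A)+\zeta_c<0$, i.e.\ $\pi(A)<\Re f(\lambda)$. The key ingredient here is the inequality $\Re f(\lambda)\ge f(\Re\lambda)$, which follows from the Bernstein representation and the elementary estimate $\cos(s\Im\lambda)\le 1$ inside $\int_0^\infty(1-e^{-s\Re\lambda}\cos(s\Im\lambda))\nu(ds)$. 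Since $\Re\lambda>c=f^{-1}(\max(\pi(A),0))$ and $f$ is monotone on $(0,\infty)$, this yields $\Re f(\lambda)>\max(\pi(A),0)\ge\pi(A)$, as required. \Cref{lem:matrix_laplace} then gives
\[\int_0^\infty e^{sA}\hat l_\lambda(s)\,ds=\hat m(-A)=\frac{f(\lambda)}{\lambda(f(\lambda)-A)}.\]

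The main obstacle I anticipate lies in part (i): carefully establishing the existence of the density $l_t$ and justifying the convolution formula. The interchange of integrals in the Fubini step and the pointwise Laplace inversion must be controlled using the integrability of $\bar\nu$ near zero (condition \eqref{con:levy measure integrability}) and the smoothness of $g_s$ guaranteed by infinite activity together with absolute continuity of $\bar\nu$. Once (i) is in place, part (ii) is essentially bookkeeping on top of \Cref{lem:matrix_laplace}, with the only nontrivial input being the half-plane estimate $\Re f(\lambda)\ge f(\Re\lambda)$ for Bernstein functions.
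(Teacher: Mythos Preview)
Your proposal is correct, and for part (i) you actually do more than the paper: the paper simply cites \cite[Proposition 3.2]{Toaldo15} for the density and the Laplace transform formula, whereas you sketch the standard hitting-time duality argument and the convolution identification via $f(\lambda)/\lambda = b + \cL[\bar\nu](\lambda)$.

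For part (ii) your route differs from the paper's. You package the computation as an application of \Cref{lem:matrix_laplace} to $dm(s)=\hat l_\lambda(s)\,ds$, and to verify its hypothesis $\pi(A)+\zeta_c<0$ you invoke the Bernstein half-plane inequality $\Re f(\lambda)\ge f(\Re\lambda)$. The paper instead works directly with the double integral: it bounds
\[
\int_0^\infty\int_0^\infty \lVert e^{sA}e^{-\lambda t}l_t(s)\rVert\,dt\,ds
\le \int_0^\infty \lVert e^{sA}\rVert \,\hat l_{\Re\lambda}(s)\,ds
= \frac{f(\Re\lambda)}{\Re\lambda}\int_0^\infty \lVert e^{sA}\rVert e^{-sf(\Re\lambda)}\,ds,
\]
controls the right-hand side via \Cref{lem:A_estimate}, and then computes the value by Fubini. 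Because the paper passes to $\Re\lambda$ (a real argument) inside the norm bound, it never needs the inequality $\Re f(\lambda)\ge f(\Re\lambda)$ explicitly. Your approach is more modular---it reuses \Cref{lem:matrix_laplace} as a black box---at the cost of one extra (easy) estimate on Bernstein functions; the paper's approach is more hands-on but entirely self-contained with \Cref{lem:A_estimate}. Both arrive at $\frac{f(\lambda)}{\lambda}(f(\lambda)-A)^{-1}$ by the same final exponential integral.
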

	\begin{rem}
	Note that under the condition $\nu(0,\infty)=\infty$ the Bernstein function $f$ is a strictly increasing bijection from $[0,\infty)$ to $[0,\infty)$. Hence its inverse $f^{-1}$ and the value $c = f^{-1}(\max(\pi(A),0)$ are well-defined and $c$ must satisfy $c \ge 0$.
	\end{rem}
	\begin{proof}
	The proof of (i) can be found in \cite[Proposition 3.2]{Toaldo15}. In order to show (ii), we first estimate 		\begin{align*}%\label{eq:absolute convergence double Laplace}
			%\hat\cT_\lambda\vec{u}
			%\doublehat{l}_\lambda(-A)
			\int_0^\infty\int_0^\infty\lVert e^{sA}e^{-\lambda t}l_t(s)\rVert\,dt\,ds
			&\leq\int_0^\infty\lVert e^{sA}\rVert\int_0^\infty e^{-\Re\lambda t}l_t(s)\,dt\,ds\nonumber \\
			&=\frac{f(\Re\lambda)}{\Re\lambda}\int_0^\infty \lVert e^{sA}\rVert e^{-sf(\Re\lambda)}\,ds,
			%&=\frac{f(\lambda)}{\lambda(f(\lambda)-A)},\quad\lambda\in\bC,
		\end{align*}
using part (i) in the second equality. By assumption $\Re \lambda > f^{-1}(\max(\pi(A),0))$, or equivalently, $f(\Re \lambda) > \max(\pi(A),0)$. Let $\epsilon > 0$ be small enough that $f(\Re \lambda) > \max(\pi(A) + \epsilon,0)$. 
Making use of \Cref{lem:A_estimate} we can find $M_\epsilon > 0$ such that
		%Combining this estimate with (\ref{eq:absolute convergence double Laplace}) yields 
		\begin{align*}
			\int_0^\infty\int_0^\infty\lVert e^{sA}e^{-\lambda t}l_t(s)\rVert\,dt\,ds
			\leq\frac{M_\epsilon f(\Re\lambda)}{\Re\lambda} \int_0^\infty e^{-s(f(\Re\lambda)- \pi(A)- \epsilon)}\,ds < \infty.
		\end{align*}
Hence, we have 
		\begin{align*}
	\cL[s \mapsto \hat l_\lambda(s)](-A)
			=\int_0^\infty\int_0^\infty e^{sA}e^{-\lambda t}l_t(s)\,dt\,ds
			&=\frac{f(\lambda)}{\lambda}\int_0^\infty e^{-s(f(\lambda)I-A)}\,ds\\
			&=\frac{f(\lambda)}{\lambda(f(\lambda)I-A)}. \qedhere
		\end{align*}
 		\end{proof}
\begin{example}\label{ex:Mittag Leffler}
	If $f(\lambda)=\lambda^\alpha$, for $\lambda\in(0,\infty)$ and $\alpha\in(0,1)$, $\sigma^f$ is the \emph{$\alpha$-stable subordinator} and its inverse $L_t$ is called the \emph{inverse $\alpha$-stable subordinator} which has the density
\begin{align}\label{eq:density inverse subordinator}
	l_t(x)=\frac{t}{\alpha}x^{-1-\frac{1}{\alpha}}g_1(tx^{-\frac{1}{\alpha}}),\quad x,t\geq0,
\end{align}
where $t\mapsto g_s(t)$ is the density of $\sigma^f_s$, which has Laplace transform $\hat g_s(\lambda)=\exp(-s\lambda^\alpha)$, $\lambda\in\bR$, $s\in(0,\infty)$ \cite[Corollary 3.1]{Meerschaert04}. Bingham \cite[Proposition 1(a)]{Bingham71} showed that the inverse $\alpha$-stable subordinator has a Mittag Leffler distribution: 
\begin{align*}
	\bE[e^{-sL_t}]=E_{\alpha}(-st^\alpha),\quad s,t\geq0,
\end{align*}
where $E_{\alpha}$, for $\alpha>0$, is the one parameter Mittag-Leffler function,
\begin{align}\label{def:mittag leffler function}
	E_{\alpha}(z)=\sum_{k=0}^\infty\frac{z^k}{\Gamma(\alpha k+1)},\quad z\in\bC
\end{align} 
and $\Gamma(\lambda)\coloneqq \int_0^\infty t^{\lambda-1}e^{-t}\,dt$ is the Gamma function. Note that $E_{\alpha}$ is an entire function. Therefore it is defined on the spectrum of any square matrix $A \in \bR^{N \times N}$ and can be evaluated by plugging $A$ into \eqref{def:mittag leffler function}.
\end{example}

\subsection{General time-fractional derivatives}
%\begin{defn}
%	Let $q\in AC_\loc((0,\infty))$ that is the space of locally absolutely continuous functions on $(0,\infty)$. 
%\end{defn}
For $f\in\cB\cF$ with triplet $(b, 0,\nu)$ we define the generalized $f$-Caputo derivative of a suitable function $q$ as
\begin{align}\label{def:caputo derivative}
	\bD^f_tq(t)
	\coloneqq\frac{d}{dt}\left(bq(t)+\int_0^t\bar\nu(t-s)(q(s)-q(0))\,ds\right),\quad t>0.
\end{align}
We note that the integral in \cref{def:caputo derivative} 
%of $t\mapsto\bD^f_tq(t)$ 
is well defined if $q$ is absolutely continuous on $(0,t]$ for each $t\geq0$, that is, $q\in AC((0,t])$ for all $t>0$, cf. \cite[page 97]{Kilbas06}. In this case, we write $q\in AC_{\loc}((0,\infty))$ and note that $q$ is differentiable a.e. by Rademacher's Theorem, that is, $\frac{d}{dt}q(t)$ exists for a.e. $t\geq0$. 
\begin{lem}\label{lem:laplace of fractional operator}
Let $q$ be of exponential order, that is,
$\lvert q(t)\rvert\leq Me^{\lambda_0 t}$, for some $\lambda_0,M>0$ and all $t\geq0$.
Then 
	\begin{align*}
		\cL\left[t \mapsto \bD^f_t q(t)\right](\lambda)
		=f(\lambda)\hat q(\lambda)-\frac{f(\lambda)}{\lambda}q(0)
	\end{align*}
exists with an abscissa of convergence $\zeta_c \le \lambda_0$. 
\end{lem}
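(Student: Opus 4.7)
The plan is to introduce the inner antiderivative
\begin{align*}
g(t) := b\, q(t) + \bigl(\bar\nu * (q - q(0))\bigr)(t), \qquad t \ge 0,
\end{align*}
so that $\bD^f_t q(t) = g'(t)$ by definition \eqref{def:caputo derivative}. I would then (a) compute $\hat g$ via the convolution theorem, (b) apply the classical rule $\cL[g'](\lambda) = \lambda \hat g(\lambda) - g(0)$, and (c) certify that everything converges on $\Re \lambda > \lambda_0$.

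The analytic core of step (a) is the identity
\begin{align*}
\cL[\bar\nu](\lambda) = \int_0^\infty e^{-\lambda t} \nu(t,\infty)\, dt = \frac{1}{\lambda}\int_0^\infty \bigl(1 - e^{-\lambda u}\bigr)\, \nu(du) = \frac{f(\lambda) - b\lambda}{\lambda},
\end{align*}
obtained by Fubini (swapping $e^{-\lambda t}$-integration against $\nu$) together with the Lévy--Khintchine representation \eqref{eq:bernstein function} with $a = 0$; finiteness for $\Re\lambda > 0$ is guaranteed by \eqref{con:levy measure integrability}. Combining this with $\cL[q - q(0)](\lambda) = \hat q(\lambda) - q(0)/\lambda$ and the convolution theorem, and adding the $bq$-part, yields an explicit formula for $\hat g(\lambda)$. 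Step (b) is then purely mechanical: since $g(0) = b\, q(0)$, multiplying $\hat g(\lambda)$ by $\lambda$ and subtracting $b\, q(0)$ causes the $b$-dependent and $1/\lambda$-terms to cancel, leaving precisely $f(\lambda)\hat q(\lambda) - f(\lambda) q(0)/\lambda$.

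The main obstacle, and essentially the only non-cosmetic work, is the analytic justification in step (c): one needs absolute convergence of every Laplace integral on $\Re \lambda > \lambda_0$, legitimacy of Fubini in the convolution identity, and the vanishing of the boundary term $e^{-\lambda t} g(t)$ at infinity so that the differentiation rule applies. For this it suffices to show that $g$ itself is of exponential order $\lambda_0$. Starting from
\begin{align*}
|q(t-u) - q(0)| \le M e^{\lambda_0(t-u)} + |q(0)|
\end{align*}
and using the crude bound $1 \le e^{\lambda_0 (t-u)}$ for $0 \le u \le t$ (recall $\lambda_0 > 0$), the convolution is controlled by
\begin{align*}
\int_0^t \bar\nu(u)\, |q(t-u) - q(0)|\, du \le \bigl(M + |q(0)|\bigr)\, e^{\lambda_0 t} \int_0^\infty e^{-\lambda_0 u} \bar\nu(u)\, du,
\end{align*}
and the last integral equals $(f(\lambda_0) - b\lambda_0)/\lambda_0 < \infty$ by the identity established in step (a). Consequently $|g(t)| \le C e^{\lambda_0 t}$ for some constant $C > 0$, which simultaneously underwrites the convolution/Fubini argument, validates integration by parts in $\int_0^\infty e^{-\lambda t} g'(t)\, dt = -g(0) + \lambda \hat g(\lambda)$, and delivers the abscissa bound $\zeta_c \le \lambda_0$.
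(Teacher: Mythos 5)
Your proof is correct and follows essentially the same route as the paper's: both take the Laplace transform of \eqref{def:caputo derivative} using the convolution theorem, the differentiation rule $\cL[g'](\lambda)=\lambda\hat g(\lambda)-g(0)$, and the identity $\cL[\bar\nu](\lambda)=\frac{f(\lambda)-b\lambda}{\lambda}$ coming from the L\'evy--Khintchine representation, and the resulting algebra is identical. The only difference is that you spell out the convergence justification (the exponential-order bound on the convolution term), which the paper handles by a citation to Widder.
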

\begin{proof}
	Clearly, $q\in AC_\loc((0,\infty))$, so $\bD^f_tq(t)$ is well defined. Note that for \( f\in\cB\cF \) expressed as in \eqref{eq:bernstein function}, the function $g(x) = \frac{f(x)}{x}$, $x > 0$, can be represented as $g(x) = b + \int_0^\infty e^{-sx} \bar{\nu}(s) \, ds$. Then, by taking Laplace transforms of \eqref{def:caputo derivative},  we get 
	\begin{align*}
		\cL\left[t \mapsto \bD^f_t q(t)\right](\lambda)
		&=b\lambda\hat q(\lambda)-bq(0)+\left(\hat q(\lambda)-\frac{q(0)}{\lambda}\right)\lambda\hat \nu(\lambda)\\
		&=\left(b+\hat \nu(\lambda)\right)\hat q(\lambda)\lambda-\left(b+\hat\nu(\lambda)\right)q(0)\\
		&=g(\lambda)\hat q(\lambda)\lambda-g(\lambda)q(0)\\
		&=f(\lambda)\hat q(\lambda)-\frac{f(\lambda)}{\lambda}q(0).
	\end{align*}
	The fact that $\zeta_c \le \lambda_0$ follows from \cite[Thm.~II.2.1]{Widder46} applied to $q$. 
\end{proof}
\begin{example}\label{example:classical caputo}
	If $f(\lambda)=\lambda^\alpha$, $\lambda\in(0,\infty)$, $\alpha\in(0,1)$, \cref{eq:bernstein function} can be identified as 
	\begin{align*}
		\lambda^\alpha=\int_0^\infty(1-e^{-s\lambda})\frac{\alpha s^{-\alpha-1}}{\Gamma(1-\alpha)}\,ds,
	\end{align*}
	which implies $a=b=0$ and 
	\begin{align*}
		\nu(ds)=\frac{\alpha s^{-\alpha-1}}{\Gamma(1-\alpha)}\,ds,
	\end{align*}
	and therefore 
	\begin{align*}
		\bar\nu(s)
		=\int_s^\infty\frac{\alpha \xi^{-\alpha-1}}{\Gamma(1-\alpha)}\,d\xi
		=\frac{s^{-\alpha}}{\Gamma(1-\alpha)}.
	\end{align*}
	Performing these substitutions in \cref{def:caputo derivative} shows that 
	\begin{align*}
	\bD^f_tq(t)=\bD^\alpha_tq(t),
	\end{align*}
	where 
	\begin{align*}
		\bD^\alpha_tq(t)
		=\frac{1}{\Gamma(1-\alpha)}\frac{d}{dt}\int_0^t(t-s)^{-\alpha}(q(s)-q(0))\,ds
	\end{align*}
	is the (conventional) Caputo derivative of order $\alpha$ \cite[Section 2.3]{Sikorskii12}.
\end{example}
\subsection{Polynomial processes}
In this section, we review the definition of a polynomial process, which is given in \cite{Cuchiero12} within the framework of time-homogeneous Markov processes. Throughout, let \( S \) be a closed subset of \( \mathbb{R}^d \), and let \( \mathcal{S} \) denote its Borel \( \sigma \)-algebra. \dbt{Let $(\Omega, \cF, \bP)$ be a measurable space, equipped with a right-continuous filtration $\bF = (\cF_t)_{t \geq 0}$. On this stochastic basis, we consider a time-homogeneous cadlag Markov process $X = (X_t)_{t \geq 0}$ with associated semigroup \( (P_t)_{t \geq 0} \), i.e. 
\begin{align}\label{eq:markov semi group}
    P_t f(x) \coloneqq \bE_x[f(X_t)], \quad x \in S,
\end{align}
acting on all Borel measurable functions \( f \colon S \to \mathbb{R} \) for which the integral is well defined. As usual, for any $x \in S$, we denote by $\bP_x(.) = \bP(.|X_0 = x)$ the conditional probability, given that $X$ starts in $x$. }
% As polynomial processes are Markov processes \( X=(X_t)_{t \geq 0} \) with the property that \( (f(X_t))_{t \geq 0} \) is a special semimartingale for all linear functionals \( f \) on \( \mathbb{R}^n \), we can safely assume that the probability space \( \Omega \) is the space of c\`adl\`ag functions \( \omega \colon \mathbb{R}_+ \to S \), as noted in \cite[Section 2]{Cuchiero12}. Thus, \( X \) is understood as the coordinate process \( X_t(\omega) = \omega(t) \), and \( (\mathcal{F}^0_t) \) denotes the filtration generated by \( X \), with \( \mathcal{F}^0 = \bigvee_{t \geq 0} \mathcal{F}^0_t \). In the sequel, we consider a right-continuous filtration \( (\mathcal{F}_t)_{t \geq 0} \) satisfying \( \mathcal{F}^0_t \subseteq \mathcal{F}_t \) and \( \mathcal{F} = \bigvee_{t \geq 0} \mathcal{F}_t \). Finally, we assume that for each \( x \in S \), there exists a probability measure \( \mathbb{P}_x \) on \( (\Omega, \mathcal{F}) \) such that \( X \) is Markovian with respect to \( (\mathcal{F}_t)_{t\geq0} \) and has semigroup \( (P_t)_{t\geq0} \), cf. \cite[Chapter 3]{Rogers00}. 
\dbt{We remark that in contrast to \cite{Cuchiero12} we do not allow for killing or explosion of $X$ and therefore do not attach a `cemetery state' to $S$.}\par 
%In the following we review the definition of polynomial processes, present their governing backward equation, and outline the derivation of the moment formula in the Markovian case as in \cite{Cuchiero12}. Let $X$ be an $S$-valued time-homogenous Markov process.
 An operator $\cG$ 
	%(is the codomain of $\cG$ in general included in $L^1_\loc(0,\infty)$? + add $m$-polynomial) 
	is called \emph{extended generator} for $X$ if its domain 
	%$M_t^f\in\cM_\loc(\cF_t,\bP_x)$ for $f\in\cD(\cG)$ and $x\in S$, where 
	$\cD_\cG$ consists of those Borel-measurable functions $u\colon S\to\bR$ for which there exists a function $\cG u$ such that $M^u$ defined by
	\begin{align*}
		M_t^u= u(X_t)-u(x)-\int_0^t\cG u(X_s)\,ds,\quad t\geq0,
	\end{align*}
	is an $\bP_x$-local martingale for every $x\in S$. Note that on bounded Borel functions, the extended generator coincides with the classical generator of the semigroup $(P_t)_{t \geq 0}$. 
\begin{defn}[Cf.~\cite{Cuchiero12}]\label{def:polynomial process}
	An $S$-valued time-homogeneous Markov process $X$ with extended generator $\cG$ is called \emph{$m$-polynomial} if for all $k\in\{1,\dots,m\}$, $u\in\cP_k$, and $x\in S$, it holds that 
	\begin{align*}
		P_t\lvert u\rvert(x)=\bE_x\left[\lvert u(X_t)\rvert\right]<\infty, \quad \text{and}\quad 
		\cG(\cP_k)\subseteq\cP_k.%,\quad\forall k\in\{1,\dots,m\}.
	\end{align*}
	If $X$ is $m$-polynomial for every $m\geq0$, then it is called \emph{polynomial}.
\end{defn}
Examples of polynomial processes include Brownian motion, L\'evy processes (under suitable conditions on finiteness of moments), Gaussian and Non-Gaussian Ornstein-Uhlenbeck processes, the Cox-Ingersoll-Ross process, all Pearson diffusions, and many stochastic volatility models used in financial mathematics, such as the Heston model, the Bates model, and the Barndorff-Nielsen-Shepard model. 
	In the sequel, $X$ denotes an $m$-polynomial process with extended generator $\cG$ and $k\in\{1,\dots,m\}$. Using Markovian techniques and the fact that \( \mathcal{G} \) leaves \( \mathcal{P}_k \) invariant, \cite[Lemma 2.6]{Cuchiero12} show that \( q(t,x) := \mathbb{E}_x[u(X_t)] \) is the unique solution to the Kolmogorov backward equation
\begin{align}\label{eq:KBE}
		\begin{cases}
			\partial_tq(t,x)=\cG q(t,x),\quad0<t<\infty,\\
			q(0,\cdot)=u\in\cP_k. 
		\end{cases}
	\end{align}
%Following \cite[Theorem 2.5]{Cuchiero12}, 
Given a basis\footnote{Typically, a monomial basis is used here, e.g. $(1,x,y,x^2,xy,y^2)$ for $\cP_2$ over $S = \bR^2$.} $H=(h_1,\dots,h_N)$ of $\cP_k$ where $N=\dim\cP_k$, the restriction of $\cG$ to $\cP_k$ can be represented in the basis $H$ by a matrix $A \in \bR^{N \times N}$, i.e., we have
	\begin{align*}
		\cG h_i\eqqcolon\sum_{j=1}^NA_{ij}h_j \qquad \text{for all } i=1, \dotsc, N.
	\end{align*}
In the sequel, we denote this matrix as $A=\cG|_{\cP_k}$. Similarly, we write $\vec{u} \in \bR^N$ for the coordinate representation of $u \in \cP_k$ in the basis $H$. On $\cP_k$, (\ref{eq:KBE}) is then equivalent to the vector-valued linear ordinary differential equation 
	\begin{align*}
		\begin{cases}
			\partial_tq(t)=Aq(t),\quad0<t<\infty,\\
			q(0)=\vec{u}\in\bR^N,
		\end{cases}
	\end{align*}
	whose unique solution is given by $t\mapsto e^{tA}\vec{u}$ \cite[Theorem 2.9]{Engel01}. This yields the following property of polynomial processes. 
	%Hence on $\cP_k$, $P_tu$ equals $e^{tA}u$, i.e. for all $x\in S$,
%\begin{align*}
%		\bE_x[u(X_t)]=H(x)^\top e^{tA}\vec{u},\quad t\geq0. 
%\end{align*}
\begin{thm}[Moment formula, cf.~\cite{Cuchiero12}]\label{thm:moment formula for polynomial processes}
	Let $X$ be $m$-polynomial with extended generator $\cG$. 
	%fix $k\in\{1,\dots,m\}$, and let $H=(h_1,\dots,h_N)$ be a basis of $\cP_k$. 
	For each $k\in\{1,\dots,m\}$ we find $A\in \bR^{N \times N}, N = \dim\cP_k$, such that $P_t|_{\cP_k}=e^{tA}$ for all $t\geq0$, i.e. for all $x\in S$ and $u\in\cP_k$, 
	\begin{align*}%\label{eq:moment formula with A}
		P_tu(x)=\bE_x[u(X_t)]=H(x)^\top e^{tA}\vec{u},\quad t\geq0. 
	\end{align*}
\end{thm}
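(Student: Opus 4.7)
The strategy is to reduce the theorem to a finite-dimensional linear ODE, exploiting the invariance $\cG(\cP_k)\subseteq\cP_k$ provided by the $m$-polynomial assumption. I would fix $k\in\{1,\dots,m\}$, pick a basis $H=(h_1,\dots,h_N)$ of $\cP_k$, and let $A\in\bR^{N\times N}$ be the matrix of $\cG|_{\cP_k}$ in this basis, so that $\cG h_i=\sum_{j}A_{ij}h_j$ by construction.

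For $u\in\cP_k$ with coordinate vector $\vec u\in\bR^N$, the function $q(t,x)\coloneqq \bE_x[u(X_t)]=P_tu(x)$ is well-defined by the integrability built into \Cref{def:polynomial process}. By \cite[Lemma 2.6]{Cuchiero12}, it is the unique solution of the Kolmogorov backward equation \Cref{eq:KBE} with initial data $u$, and moreover $P_t(\cP_k)\subseteq\cP_k$, so we may write $q(t,\cdot)=H^{\top}\vec q(t)$ for a unique $\vec q(t)\in\bR^N$. Substituting this ansatz into \Cref{eq:KBE} and using $\cG H=AH$ componentwise, I would compare coefficients in the basis $H$ to obtain the vector-valued Cauchy problem
\begin{equation*}
\frac{d}{dt}\vec q(t)=A\,\vec q(t),\qquad \vec q(0)=\vec u,
\end{equation*}
whose unique solution is $\vec q(t)=e^{tA}\vec u$ by standard linear ODE theory (cf.\ \cite[Theorem 2.9]{Engel01}, already cited above). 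Reinserting yields $P_tu(x)=H(x)^\top e^{tA}\vec u$, the desired formula; equivalently, $P_t|_{\cP_k}=e^{tA}$ as an operator on the coordinate space $\bR^N$.

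The main obstacle is really the preliminary step of establishing that $q(t,\cdot)$ lies in $\cP_k$ for all $t\ge 0$ and solves \Cref{eq:KBE} pointwise in $t$. This is where the $m$-polynomial property is genuinely used: starting from the local martingale $M^u_t=u(X_t)-u(x)-\int_0^t\cG u(X_s)\,ds$, one must localise via stopping times, upgrade to a true martingale using the moment bound $P_t|u|(x)<\infty$, and then differentiate under the expectation to recover \Cref{eq:KBE}. Since this is precisely the content of \cite[Lemma 2.6]{Cuchiero12}, in the present write-up I would simply invoke that lemma and devote the proof to the (elementary) linear-algebra reduction above; the matrix exponential then identifies itself as the semigroup of $P_t|_{\cP_k}$.
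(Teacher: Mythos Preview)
Your proposal is correct and follows essentially the same argument the paper gives in the paragraph preceding the theorem: invoke \cite[Lemma~2.6]{Cuchiero12} to obtain the backward equation \eqref{eq:KBE} and the invariance $P_t(\cP_k)\subseteq\cP_k$, then reduce to the vector-valued linear ODE and solve via the matrix exponential using \cite[Theorem~2.9]{Engel01}. The paper does not provide a separate proof environment for this theorem, so your write-up is in fact more detailed than what appears there.
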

\begin{rem}	
\begin{enumerate}[(i)]
%    \item In \Cref{def:polynomial process} it is implicitly assumed that $\cP_m\subseteq\cD(\cG)$. 
    \item Taking advantage of the fact that polynomial processes are special semimartingales with polynomial coefficients, \Cref{thm:moment formula for polynomial processes} can be used to characterise polynomial processes. A detailed exposition of this can be found in \cite{Cuchiero12}. 
    \item An important extension of the moment formula is presented in \cite[Theorem 2.5]{Filipovic17}, where the finiteness of absolute moments in \Cref{def:polynomial process} is not required. In this case, the conditional moment formula holds, i.e. for $u\in\cP_k$,
    % finiteness of unconditional moments is not assumed \cite[Example 2.6]{Filipovic19}.
    \begin{align}\label{eq:conditional moment formula}
        \mathbb{E}[u(X_T) \mid \cF_t] = H(X_t)^\top e^{(T-t)A} \vec{u}, \quad t \leq T.
    \end{align}
\end{enumerate}
%\begin{rem}
%	Notes on polynomial jump diffusions.\info{why did we choose the Markovian framework?}
%\end{rem}
\end{rem}

\subsection{Stochastic Representations} 
\dbt{
We conclude the section with some stochastic representations of polynomial processes, which can be found in \cite{Cuchiero12} and \cite{Filipovic16}. The first represents the process in terms of semimartingale characteristics, while the second provides an SDE in the diffusion case. Note that further representations can be found in the literature; see, e.g. \cite{Filipovic17} for the `jump-diffusion' case. We will compare these representations to the subordinated case in \Cref{sub:stochastic_sub}.}
\begin{prop}\label{prop:stochastic}
\dbt{
\begin{enumerate}[(a)]
\item Let $X$ be a polynomial process on $S$ without killing or explosion. Then, on any of the filtered probability spaces $(\Omega, \cF, \bF, \bP_x), x \in S$, the process $X$ is a special Ito semimartingale, and its characteristics $(B, C, \eta)$ associated with the ``truncation function'' $\chi(\xi) =\xi$ satisfy 
\begin{align}
 &B_{t,i}=\int_0^{t} \mathfrak{b}_i(X_{s}) ds,\label{eq:B}\\
 & C_{t,ij}+\int_0^t\int_{\mathbb{R}^n}\xi_i\xi_j \eta(ds, d\xi)= \int_0^{t}  \mathfrak{a}_{ij}(X_{s}) ds,\label{eq:C}
\end{align}
where $\mathfrak{b}_i \in \cP_1$ and
$\mathfrak{a}_{ij} \in \cP_2$, for all $i,j \in \set{1, \dotsc, d}$. Moreover, $C$ and $\eta$ can be written as
\begin{align}\label{eq:C_eta}
C_{t,ij}=\int_{0}^{t}c_{s,ij}ds, \quad \eta(\omega; [0,t], d\xi)=\int_0^t K_{\omega,s}(d\xi)ds,
\end{align}
where $(c_{ij})_{i,j \leq n}$ is a predictable process and $K_{\omega,t}(d\xi)$ is a predictable random measure on $(\mathbb{R}^n, \mathcal{B}(\mathbb{R}^n))$, which satisfies 
\begin{align}\label{eq:K}
\int_{\mathbb{R}^{n}} \xi^{\mathbf{k}} K_{\omega,t}(d\xi) =\sum_{|\mathbf{l}|=0}^{|\mathbf{k}|}\alpha_{\mathbf{l}} X_t^{\mathbf{l}}(\omega), \quad \textrm{for almost all } t \geq 0,
\end{align}
with some coefficients $\alpha_{\mathbf{l}}$, for every multi-index $\mathbf{k} \in \bN_0^d$ with $|\mathbf{k}| \ge 3$.
\item Let $X$ be a polynomial diffusion (i.e., a diffusion process which is also a polynomial process), then it satisfies the SDE
\begin{equation}\label{eq:SDE}
dX_t = \mathfrak{b}(X_t)\,dt + \sigma(X_t)\, dW_t,
\end{equation}
\end{enumerate}
where $W$ is a $d$-dimensional Brownian motion, and where, for every $i,j \in \set{1, \dotsc, d}$, it holds that $\mathfrak{b}_i \in \cP_1$ and
$\mathfrak{a}_{ij}  = (\sigma \sigma^\top)_{ij} \in \cP_2$.}
\end{prop}
\begin{proof}
\dbt{
Assertion (a) follows from \cite[Prop.~2.12]{Cuchiero12} and assertion (b) from \cite[Lem~2.2]{Filipovic16}.}
\end{proof}

\section{Generalized fractional polynomial processes}\label{sec:inverse subordinated polynomial processes}
\dbt{
Recall, that we work on the probability space $(\Omega, \cF, \bP)$ equipped with a right-continuous filtration $\bF = (\cF_t)_{t \geq 0}$, and carrying a polynomial process $X = (X_t)_{t \geq 0}$. Moreover, let $\sigma^f = (\sigma_t^f)_{t \geq 0}$ be a L\'evy subordinator with Laplace exponent $f \in \cB\cF$, independent from $X$, and adapted to $\bF$. Its inverse process $L = (L_t)_{t \geq 0}$ is then an increasing process with the property that each $L_t$ is a $\bF$-stopping time. Hence, the time-changed filtration $\bH = (\cH_t)_{t \geq 0}$, is well defined by setting $\cH_t = \cF_{L_t}$, and the process $Y_t = X_{L_t}$ -- our main object of interest -- is $\bH$-adapted.}\par
\dbt{We will derive a generalization of Kolmogorov's backward equation for the time-changed semi-group $\cT_tu(.) =\bE_.[u(X_{L_t})]$ associated to $Y$ in \Cref{sub:bw_eq} and show that $\cT_t$ preserves polynomials. Then, in \Cref{sub:moment}, we provide a moment formula for $Y$ in analogy to \Cref{thm:moment formula for polynomial processes}. Finally, we give two stochastic representations of $Y$, parallel to \Cref{prop:stochastic}, in \Cref{sub:stochastic_sub}. }
\subsection{Generalized fractional Kolmogorov backward equation}\label{sub:bw_eq}
%In view of (\ref{def:poly property exp}), there exists a linear map $A$ on $\cP_k$ such that for all $t\geq0$,  
Let $\cG$ denote the extended generator of the polynomial process $X$. We present the generalized fractional analogue to \cref{eq:KBE}, which, as we will show, 
serves as the Kolmogorov backward equation for inverse subordinated polynomial processes.  
%yields an analogue to \Cref{thm:moment formula for polynomial processes} for inverse subordinated polynomial processes. 
Here, the ordinary time derivative is replaced by the generalized $f$-Caputo derivative defined in \cref{def:caputo derivative} with respect to $f \in \cB\cF$ with triplet $(b,a,\nu)$, i.e.
\begin{align*}
	\begin{cases}
		\bD^f_tq(t,x)=\cG q(t,x),\quad0<t<\infty,\\
		 q(0,\cdot)=u\in\cP_k.
	\end{cases}
\end{align*}
We set $A=\cG|_{\cP_k}\in\bR^{N\times N}$ where $N=\dim\cP_k$. Then on $\cP_k$, above problem is equivalent to the vector-valued linear generalized fractional differential equation
%\todo{get notation consistent: $f$ for Bernstein functions, $p$ for polynomials, $q$ for the DEs, $u$ for vector. Dimension should be $n$ to match the proof.}
\begin{align}\label{problem:linear FDE1}
	\begin{cases}
		\bD^f_tq(t)=Aq(t),\quad 0<t<\infty,\\
		q(0)=\vec{u}\in\bR^N.
	\end{cases}
\end{align}
	\begin{thm}\label{thm:timechangedSG}
		Let $A \in \bR^{N \times N}$ and let $\sigma^f$ be a subordinator with Laplace exponent $f$ with representation (\ref{eq:bernstein function})
		%$f\in\cB\cF$ with triplet $(b,0, \nu)$ 
		where $a=0$, $b\geq0$, $\nu(0,\infty)=\infty$, and $s\mapsto\bar\nu(s)=\nu(s,\infty)$ is absolutely continuous on $(0,\infty)$. Further, let $L_t$, $t\geq0$ be the inverse of $\sigma^f$ and 
		define for each $\vec{u}\in\bR^N$ and $t\geq0$ the linear mapping
		\begin{align*}%\label{eq:timechangedSG}
			T_t\vec{u}=\bE[e^{L_tA}]\vec{u}.
		\end{align*}
		Then %for arbitrary $k\in\{1,\dots,m\}$ 
		the following holds: 
		\begin{enumerate}[(i)]
			\item $T_t$ is well-defined for all $t \geq 0$;
			\item $(t\mapsto T_t\vec{u})\in AC_\loc((0,\infty))$;
			\item $t\mapsto T_t\vec{u}$ uniquely solves (\ref{problem:linear FDE1}).
		\end{enumerate}
		%In particular, $t\mapsto\bD^f_tq(t)$ is well defined for $q(t)=T_t\vec{u}$, $t\geq0$.
		%and, for $b>0$, the function $t\mapsto T_t\vec{u}$ is differentiable a.e.
	\end{thm}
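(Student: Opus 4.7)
The overall plan is to attack all three claims simultaneously via Laplace transforms: compute the Laplace transform of $t \mapsto T_t\vec u$ using \Cref{prop:double laplace transform l}, show it agrees with the Laplace transform of any solution of (\ref{problem:linear FDE1}) obtained via \Cref{lem:laplace of fractional operator}, and invert. The absolute continuity claim (ii) is what actually makes the fractional derivative on the left-hand side of \eqref{problem:linear FDE1} meaningful, so it has to be established before (iii) can even be formulated; I expect this to be the main obstacle.

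For well-definedness (i), I would write $T_t\vec u = \int_0^\infty e^{sA} l_t(s)\,ds\,\vec u$ via the density representation in \Cref{prop:double laplace transform l}(i) and bound
\[
\lVert T_t\vec u \rVert \;\le\; M_\epsilon\, \bE\!\left[e^{(\pi(A)+\epsilon)L_t}\right] \lVert \vec u\rVert
\]
using \Cref{lem:A_estimate}(i). The finiteness of $\bE[e^{cL_t}]$ for $c \ge \max(\pi(A),0)+\epsilon$ and every $t \ge 0$ follows by applying \Cref{prop:double laplace transform l}(ii) with the $1\times 1$ matrix $A = c$: the resulting Laplace transform $\frac{f(\lambda)}{\lambda(f(\lambda)-c)}$ is analytic on $\{\Re\lambda > f^{-1}(c)\}$, forcing $\bE[e^{cL_t}]<\infty$ for a.e.\ $t$, hence for all $t$ by monotonicity of $c \mapsto \bE[e^{cL_t}]$ and lower-semicontinuity. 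This also shows $t \mapsto T_t\vec u$ is of exponential order, which is needed later.

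For identification and uniqueness (iii), Fubini plus \Cref{prop:double laplace transform l}(ii) give
\[
\cL[t \mapsto T_t\vec u](\lambda) \;=\; \int_0^\infty e^{sA}\,\hat l_\lambda(s)\,ds\,\vec u \;=\; \frac{f(\lambda)}{\lambda(f(\lambda)-A)}\,\vec u
\]
for $\Re\lambda > f^{-1}(\max(\pi(A),0))$. Conversely, if $q \in AC_\loc((0,\infty))$ is any exponential-order solution of \eqref{problem:linear FDE1}, \Cref{lem:laplace of fractional operator} turns the equation into $(f(\lambda)-A)\hat q(\lambda) = \frac{f(\lambda)}{\lambda}\vec u$, which yields the same rational expression. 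Uniqueness of Laplace transforms for continuous functions then forces $q(t) = T_t\vec u$, giving both that $T_t\vec u$ solves the equation and that it is the only such solution.

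The hard part is (ii). To prove $t \mapsto T_t\vec u \in AC_\loc((0,\infty))$, I would split $l_t(s) = b g_s(t) + (\bar\nu * g_s)(t)$ from \Cref{prop:double laplace transform l}(i) and argue pointwise-in-$s$ absolute continuity in $t$: the convolution $(\bar\nu * g_s)(t) = \int_0^t \bar\nu(t-u) g_s(u)\,du$ is locally AC in $t$ because $\bar\nu \in L^1_\loc((0,\infty))$ and $g_s \in L^1$, and the drift term inherits its regularity from smoothness of $t \mapsto g_s(t)$ away from $0$. The real work is to interchange $d/dt$ with $\int_0^\infty e^{sA}(\cdot)\,ds$ on compact subintervals $[t_0,t_1]\subset(0,\infty)$; I would produce an $s$-integrable dominating function by combining the exponential bound on $\lVert e^{sA}\rVert$ from \Cref{lem:A_estimate} with uniform-in-$t\in[t_0,t_1]$ tail estimates on $g_s(t)$ and $|\partial_t g_s(t)|$ as $s \to \infty$ (the subordinator's transition density concentrates exponentially in $s$ once $t$ is bounded). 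Once the interchange is justified, the derivative of $T_t\vec u$ lies in $L^1_\loc((0,\infty))$, establishing (ii) and closing the loop back to (iii).
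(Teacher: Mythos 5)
Your treatment of (i) and (iii) is essentially the paper's own route: well-definedness via the bound $\lVert e^{sA}\rVert\le M_\epsilon e^{s(\pi(A)+\epsilon)}$ together with \Cref{prop:double laplace transform l}, and identification/uniqueness by matching the Laplace transform $\frac{f(\lambda)}{\lambda(f(\lambda)-A)}\vec{u}$ of $T_t\vec u$ against the transformed equation $(f(\lambda)-A)\hat q(\lambda)=\frac{f(\lambda)}{\lambda}\vec u$ from \Cref{lem:laplace of fractional operator}, then invoking injectivity of the Laplace transform on continuous functions. (Your direct computation of $\cL[T_\cdot\vec u]$ is in fact cleaner than the paper's integration-by-parts verification; just note that $f(\lambda)-A$ is invertible for $\Re\lambda>c$.) One small slip in (i): upgrading finiteness of $\bE[e^{cL_t}]$ from a.e.\ $t$ to all $t$ uses monotonicity in $t$ (since $t\mapsto L_t$ is non-decreasing), not monotonicity in $c$.

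The genuine gap is in (ii), exactly where you predicted difficulty. Your plan requires (a) that $t\mapsto g_s(t)$ be differentiable away from $0$ so that the drift term $b\int_0^\infty e^{sA}g_s(t)\,ds$ "inherits regularity", and (b) uniform-in-$t$ tail estimates on $g_s(t)$ and $\lvert\partial_t g_s(t)\rvert$ as $s\to\infty$ to justify interchanging $d/dt$ with $\int_0^\infty(\cdot)\,ds$. Neither is available under the stated hypotheses: for a general subordinator with $\nu(0,\infty)=\infty$ and absolutely continuous tail, nothing guarantees that the transition density $g_s(t)$ is differentiable in $t$, let alone that its $t$-derivative admits integrable-in-$s$ bounds. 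Your justification for the convolution term is also misattributed: a convolution of two merely locally integrable functions need not be locally absolutely continuous; what is needed (and what the hypotheses supply) is that $\bar\nu\in AC_{\loc}((0,\infty))$. The paper's proof is structured precisely to avoid differentiating $g_s$ in $t$: for $b=0$ it integrates over $s$ \emph{first}, writing $T_t\vec u=(\bar\nu*F)(t)\vec u$ with $F(r)=\int_0^\infty e^{sA}g_s(r)\,ds$, shows $F\in L^1_{\loc}$ by a Laplace-transform estimate, and cites the fact that the convolution of an $AC_{\loc}$ function with an $L^1_{\loc}$ function is $AC_{\loc}$; for $b>0$ it bypasses densities altogether using the pathwise Lipschitz bound $\lvert L_t-L_s\rvert\le\lvert t-s\rvert/b$. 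To close your argument you would need to either prove the missing regularity of $g_s$ (not possible in this generality) or restructure (ii) along these lines.
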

	\begin{proof} For (i) we have to show that 
	\begin{align*}%\label{eq:singular integral}
		T_t\,
		%&=\int_\Omega\int_\Omega u(M(L^f(t)(\omega_2))(\omega_1))\,\p_x(d\omega_1)\,\p_x(d\omega_2)\nonumber \\
		%&=\int_0^\infty\int_\Omega u(M(s)(\omega_1)\,\p_x(d\omega_1)\,l_t(ds)\nonumber \\
		%=\int_0^\infty P_su\,l_t(ds)
		=\bE[e^{L_tA}]
		=\int_0^\infty e^{sA}l_t(s)\,ds
	\end{align*}
	is finite for all $t \ge 0$. Here, $l_t(s)$ resp. $g_s(t)$ represents the density of $l_t(ds)$ resp. $g_s(dt)$ which exists because $\nu(0,\infty)=\infty$ and $s\mapsto\bar\nu(s)$ is absolutely continuous; see \Cref{prop:double laplace transform l} (i). 
		\par 
Using \Cref{prop:double laplace transform l}, we know that the integral
		\begin{align*}%\label{eq:laplace of T}
			\cL[t \mapsto T_t](\lambda) =\int_0^\infty\int_0^\infty e^{-\lambda t}e^{sA}l_t(s)\,ds\,dt
			%=\doublehat{l}_\lambda(-A)
		\end{align*}
%		\begin{align}
%			\doublehat{l}_\lambda(-A)
%			=\int_0^\infty\int_0^\infty e^{sA}e^{-\lambda t}l_t(s)\,dt\,ds
%		\end{align}
		is absolutely convergent for $\lambda \in \bC$ with $\Re\lambda>c$, where $c = f^{-1}(\max(\pi(A),0)) \ge 0$. An application of Fubini's Theorem \cite[Corollary 14.9]{Schilling17} implies that the function $s\mapsto e^{-\lambda t}e^{sA}l_t(s)$ 
		is integrable in $\bR^{N\times N}$. Dropping the scaling factor $e^{-\lambda t}$ yields the integrability of $s\mapsto e^{sA}l_t(s)$ in $\bR^{N\times N}$, thus, $T_t = \bE[e^{L_tA}]$ is well-defined for all $t \ge 0$.
		\par 
	To (ii): first assume $b=0$. Then $l_t(s)=(\bar\nu*g_s)(t)$ by \Cref{prop:double laplace transform l} (i). For each $t\geq0$ and $\vec{u}\in\bR^n$ this gives 
		\begin{align*}
			T_t\vec{u}
			&=\int_0^\infty e^{sA}\vec{u}\int_0^t\bar\nu(t-r)g_s(r)\,dr\,ds\\
			&=\left(\int_0^t\bar\nu(t-r)\int_0^\infty e^{sA}g_s(r)\,ds\,dr\right)u\\
			&=(\bar\nu*F)(t)u,
		\end{align*}
		where we used Fubini' Theorem in the second equality and set 
		\begin{align*}
			F(r)=\int_0^\infty e^{sA}g_s(r)\,ds,\quad r\geq0.
		\end{align*}
		%Then $F\colon(0,\infty)\to\cP_k$ is continuous (\textcolor{red}{Problem: boundary condition for $s\mapsto e^{sG}$}), which follows from uniform continuity of $r\mapstog_s(r)$ for each $s\geq0$, 
		In order to see that $(t\mapsto T_t\vec{u})\in AC_\loc((0,\infty))$, it is sufficient to show that $F\in L^1_\loc((0,\infty))$ and $\bar\nu\in AC_\loc((0,\infty))$ \cite[Chapter 3.8, Corollary 7.4]{Gripenberg90}. Now $\bar\nu\in AC_\loc((0,\infty))$ by assumption. Hence it is left to show that $F\in L^1_\loc((0,\infty))$: for each $t\geq0$, we have to show that $I_t := \lVert F\rVert_{L^1((0,t))}$ is finite. We first estimate
		\begin{align*}
			I_t = \int_0^t\lVert F(r)\rVert\,dr &\leq\int_0^t\int_0^\infty\lVert e^{sA}g_s(r)\rVert\,ds\,dr\\
			&=\int_0^\infty\lVert e^{sA}\rVert\int_0^tg_s(r)\,dr\,ds\\
			&=\int_0^\infty\lVert e^{sA}\rVert\int_s^\infty l_t(r)\,dr\,ds\\
			&=\int_0^\infty l_t(r)\int_0^r\lVert e^{sA}\rVert\,ds\,dr,
			%&=\int_0^\infty\lVert e^{sG}\rVert_zg_s((0,t])\,ds\\
			%&=\int_0^\infty\lVert e^{sG}\rVert_z l_t([s,\infty))\,ds.
		\end{align*}
		where we have used that $\bP(\sigma_s\leq t)=\bP(L_t\geq s)$ for all $s,t\geq0$. Now choose $\epsilon > 0$ and set $c_\epsilon = \max(\pi(A) + \epsilon,0)$. For any $\lambda > f^{-1}(c_\epsilon)$, we have
		 
		\begin{align}
			\int_0^\infty e^{-\lambda t} I_t dt &\leq M_\epsilon \int_0^\infty \int_0^\infty e^{-\lambda t} l_t(r) \int_0^r e^{s c_\epsilon} ds\,dr\,dt \\
			&\leq \frac{M_\epsilon}{c_\epsilon} \int_0^\infty \int_0^\infty e^{-\lambda t} l_t(r)\,dt e^{r c_\epsilon}\,dr \notag\\
			&= \frac{M_\epsilon}{c_\epsilon} \frac{f(\lambda)}{\lambda} \int_0^\infty e^{r (c_\epsilon - f(\lambda)}\,dr < \infty, \label{eq:F is L1}
		\end{align}
		which proves that $F\in L^1_\loc((0,\infty))$.
		 \par 
		Let us now consider the case $b>0$: 
%		%First, note that
%			\begin{align*}
%				\frac{d}{ds}P_su=GP_su=P_sGu,\quad\forall u\in\cP_k,
%			\end{align*}
%			since $X$ is $m$-polynomial \cite[Lemma 2.6]{Cuchiero12}. 
		since $t\mapsto e^{tA}$ is continuous and locally bounded it is locally Lipschitz continuous on $(0,\infty)$, i.e. for $u\in\bR^N$ and every $z>t_2>t_1\geq0$, we have that
		\begin{align*}
			\lVert e^{t_2A}u-e^{t_1A}u\rVert
			\leq\int_{t_1}^{t_2}\lVert \partial_s e^{sA}u\rVert\,ds
			&=\int_{t_1}^{t_2}\lVert Ae^{sA}u\rVert\,ds\\
			&=\int_{t_1}^{t_2}\lVert e^{sA}(Au)\rVert\,ds\\
			&\leq c(z)\lVert Au\rVert \lvert t_2-t_1\rvert.
		\end{align*}
	    Note that when $b>0$, $\lvert L_t-L_s\rvert\leq\lvert t-s\rvert/b$. Hence for every $z>t>s\geq0$ we get
		\begin{align*}
			\lVert T_t\vec{u}- T_s\vec{u}\rVert
			=\lVert\bE[e^{L_tA}\vec{u}-e^{L_sA}\vec{u}]\rVert
			\leq c_1(z)\bE[\lvert L_t-L_s\rvert]
			\leq c_1(z)(t-s)/b.
		\end{align*}
		Therefore, $t\mapsto T_t\vec{u}$ is locally Lipschitz continuous which directly implies that $t\mapsto T_t\vec{u}$ is locally absolutely continuous on $(0,\infty)$. \par 		%Therefore, $t\mapsto T_t\vec{u}$ is locally Lipschitz continuous on $(0,\infty)$ which implies that 
		%$t\mapsto T_t\vec{u}$ is differentiable a.e. Thus, \cref{eq:FPDE} is well defined. \par
		To (iii): the map $t\mapsto\bD^f_tq(t)$ is well defined for $q(t)=T_t\vec{u}$, $t\geq0$, by (ii) and the remarks after \cref{def:caputo derivative}. We proceed with showing that $t\mapsto T_t\vec{u}$ uniquely solves \cref{problem:linear FDE1}.	
		By \Cref{lem:laplace of fractional operator} we note that the Laplace transform of \cref{problem:linear FDE1} becomes 
			\begin{align}\label{eq:laplace transformed problem}
				\begin{cases}
					f(\lambda)\hat q(\lambda)-\frac{f(\lambda)}{\lambda}q(0)=A \hat q(\lambda),\quad 0<t<\infty,\\
					q(0)=\vec{u}\in\bR^N.
				\end{cases}
			\end{align} 
			In view of \Cref{prop:double laplace transform l} the Laplace transform of $t\mapsto T_t\vec{u}$ reads
			\begin{align*}
				\hat{T}_\lambda\vec{u}
			=\int_0^\infty e^{-\lambda t}T_t\vec{u}\,dt
			&=\left(\int_0^\infty e^{sA}\int_0^\infty e^{-\lambda t}l_t(s)\,dt\,ds\right)\vec{u}\\
			&=\doublehat{l}_\lambda(-A)\vec{u},
			\end{align*}
			for $\Re\lambda>c$ where $c = f^{-1}(\max(\pi(A),0)) \ge 0$.
			% and we used Fubini's Theorem \cite[Corollary 14.9]{Schilling17}. 
			In this case, $\hat T_\lambda\vec{u}$ is absolutely convergent as remarked in \Cref{prop:double laplace transform l} (ii) and we derive 
			\begin{align*}
				A\hat{T}_\lambda\vec{u}
				&=\int_0^\infty e^{-\lambda t}AT_t\vec{u}\,dt
				=\int_0^\infty e^{-\lambda t}\int_0^\infty Ae^{sA}\vec{u}\,l_t(s)\,ds\,dt\\
				&=\int_0^\infty e^{-\lambda t}\int_0^\infty \frac{d}{ds}e^{sA}\vec{u}\,l_t(s)\,ds\,dt\\
				&=\frac{f(\lambda)}{\lambda}\int_0^\infty e^{-sf(\lambda)}\frac{d}{ds}e^{sA}\vec{u}\,ds\\
				&=\frac{f(\lambda)}{\lambda}\lim_{h\to0}\left(\frac 1h\int_0^\infty e^{-sf(\lambda)}e^{(s+h)A}\vec{u}\,ds-\frac 1h\int_0^\infty e^{-s(f(\lambda)-A)}\vec{u}\,ds\right)\\
				&=\frac{f(\lambda)}{\lambda}\lim_{h\to0}\left(\frac{e^{hf(\lambda)}}{h}\int_h^\infty e^{-sf(\lambda)}e^{sA}\vec{u}\,ds
				-\frac 1h\int_0^\infty e^{-s(f(\lambda)-A)}\vec{u}\,ds\right)\\
				&=\frac{f(\lambda)}{\lambda}\lim_{h\to0}\left(\frac{e^{hf(\lambda)}-1}{h}\int_0^\infty e^{-sf(\lambda)}e^{sA}\vec{u}\,ds
				-\frac{e^{hf(\lambda)}}{h}\int_0^h e^{-s(f(\lambda)-A)}\vec{u}\,ds\right)\\
				&=\frac{f(\lambda)}{\lambda}\left(f(\lambda)\frac{\lambda}{f(\lambda)}\hat{T}_\lambda\vec{u}
				-e^{0f(\lambda)}e^{0(f(\lambda)-A)}\vec{u}\right)\\
				&=f(\lambda)\hat{T}_\lambda\vec{u}-\frac{f(\lambda)}{\lambda}\vec{u},
			\end{align*}
			where in the fourth step we used \Cref{prop:double laplace transform l} (i). Thus, $\hat{T}_\lambda\vec{u}$ solves \cref{eq:laplace transformed problem} for all $\lambda\in\bC$ for which $\Re\lambda>c$, and from the uniqueness of the Laplace transform \cite[Theorem 1a, p. 432]{Feller71} it follows that  $t\mapsto T_t\vec{u}$ is the unique continuous 
			%(\textcolor{red}{both, local Lipschitz and absolute continuity imply continuity}) 
			solution to \cref{problem:linear FDE1}.  		
	\end{proof}
\begin{cor}\label{cor:solution pol}
	Suppose the assumptions in \Cref{thm:timechangedSG} are satisfied and let $X$ be an $m$-polynomial process. Let $A \in \bR^{N \times N}$, $N = \dim \cP_k$, be its generator matrix on $\cP_k$, $k \le m$ and, for each $u\in\cP_k$ and $t\geq0$, define the operator 
	\begin{align}\label{eq:time-changed semigroup}
		\cT_tu(x)
		=\bE_x[u(X_{L_t})],\quad x\in S.
	\end{align}
	Then %for arbitrary $k\in\{1,\dots,m\}$ 
		the following holds: 
		\begin{enumerate}
			\item[(i)] $\cT_t$ preserves polynomials, i.e. $\cT_t(\cP_k)\subseteq\cP_k$ for all $t\geq0$; 
			\item[(ii)] $t\mapsto\overrightarrow{\cT_tu}$ uniquely solves (\ref{problem:linear FDE1}).
%			\begin{align}\label{eq:FPDE pol}
%				\begin{cases}
%					\bD^f_tq(t)=A q(t),\quad 0<t<\infty,\\
%					q(0)=\vec{u}\in\bR^N,\quad\text{for}\ u\in\cP_k.
%				\end{cases}
%			\end{align}
		%The evolution equation transforms into a linear fractional differential equation. 
		\end{enumerate}
	%In particular, $t\mapsto\bD^f_tq(t)$ is well defined for $q(t)=\overrightarrow{\cT_tu}$, $t\geq0$.
\end{cor}
\begin{proof}
	Since $X$ is $m$-polynomial and $\cP_k\cong\bR^N$ we can write
	\begin{align*}
		\cT_tu(x)
		=\bE_x\left[H(X_{L_t})^\top\vec{u}\right]
		&=\bE\left[\bE_x\left[H(X_{L_t})^\top\vec{u}\mid\cF_{L_t}\right]\right]\\
		&=H(x)^\top\bE\left[e^{L_tA}\vec{u}\right],
		%&=\int_\Omega\int_\Omega u(M(L^f(t)(\omega_2))(\omega_1))\,\p_x(d\omega_1)\,\p_x(d\omega_2)\nonumber \\
		%&=\int_0^\infty\int_\Omega u(M(s)(\omega_1)\,\p_x(d\omega_1)\,l_t(ds)\nonumber \\
		%=\int_0^\infty P_su\,l_t(ds)
		%=\left(\int_0^\infty e^{sG}l_t(s)\,ds\right)u,\quad u\in\cP_k.
	\end{align*}
	where $\vec{u}\in\bR^N$. Now \Cref{thm:timechangedSG} (i) yields that for $u\in\cP_k$ and each $t\geq0$,
	\begin{align*}
		\overrightarrow{\cT_tu}
		=T_t\vec{u}
		=\bE\left[e^{L_tA}\vec{u}\right]\in\bR^N,
	\end{align*} 
	and hence, $\cT_tu\in\cP_k$. Then \Cref{thm:timechangedSG} (ii) and (iii) prove the remaining assertion.
\end{proof}
\begin{rem}
	\dgt{
	\begin{enumerate}
		\item Note that $(\mathcal{T}_t)_{t\geq0}$, as defined in (\ref{eq:time-changed semigroup}), generally does not inherit the semigroup property of  $(P_t)_{t\geq0}$ (see (\ref{eq:markov semi group})), unless in special cases, such as when the time-change $(L_t)_{t\geq0}$ is deterministic.
		\item The findings in \Cref{cor:solution pol} can be extended to an infinite-dimensional setting of polynomial processes in Banach spaces, as introduced in \cite{Cuchiero21,Cuchiero24}. In this framework, conditional moments for polynomial processes are expressed through a deterministic dual process which is the solution of a linear ODE. In a similar approach, replacing the linear ODE by a linear FDE yields the conditional moments of the corresponding time-changed polynomial process. The approach is generic and primarily requires weak formulations of \Cref{problem:linear FDE1} and suitable solution concepts in Banach spaces. 
	\end{enumerate}
	}	%defines a time-changed semigroup $\cT_t$ of the process $X_t$ which does not inherit the semigroup property unless of course $L_t$ is deterministic. }
\end{rem}
\subsection{A moment formula}\label{sub:moment}
We are now in a position to generalize the moment formula in \Cref{thm:moment formula for polynomial processes} for conventional polynomial processes to polynomial processes time-changed by an inverse subordinator $L_t$. 
\begin{thm}[Moment Formula I]\label{cor:gen_moment_formula}Let $f \in \cB\cF$ with triplet $(b,0,\nu)$ and let $L_t$ be the corresponding inverse subordinator. Let $\bD_t^f$ be the associated $f$-Caputo derivative, and let $q(t,a)$ be the solution of the \emph{scalar linear fractional differential equation}
\begin{equation}\label{problem:linear_FDE_scalar}
		\bD^f_tq(t,a)=a q(t,a),\qquad q(0)=1.
\end{equation}
Moreover, let $X$ be $m$-polynomial with extended generator $\cG$ and let $A \in \bR^{N \times N}$ be the matrix representation of $\cG$ in a basis $H(x)$ of $\cP_m$. Then, for any $t \ge 0$ the mapping $a \mapsto q(t,a)$ is defined on the spectrum of $A$ and for any $u \in \cP_m$, we have
	\begin{align*}%\label{eq:moment formula with A}
		\cT_tu(x) =\bE_x[u(X_{L_t})]=H(x)^\top q(t,A) \vec{u}. 
	\end{align*}
\end{thm}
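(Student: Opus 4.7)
The plan is to link $\cT_t$ from \Cref{cor:solution pol} to the matrix function $q(t,A)$ via a Laplace-transform analysis of the scalar FDE (\ref{problem:linear_FDE_scalar}), together with the matrix function calculus of \Cref{sec:matrix}. First I would apply \Cref{lem:laplace of fractional operator} to (\ref{problem:linear_FDE_scalar}) to obtain the explicit Laplace transform
\[
\hat q(\lambda, a) \;=\; \frac{f(\lambda)}{\lambda\,(f(\lambda)-a)},
\]
valid for $\Re\lambda$ large enough. The scalar case $N=1$ of \Cref{thm:timechangedSG} identifies the unique scalar solution as $q(t,a) = \bE[e^{aL_t}]$, which suggests that $q(t,A)$ should coincide with $T_t = \bE[e^{L_tA}]$ once we give both sides a meaning compatible with Higham's matrix function calculus.

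To set up the matrix extension, I need $a \mapsto q(t,a)$ to be defined on the spectrum of $A$ in the sense of \Cref{sec:matrix}. I would establish this by expanding $\hat q(\lambda,a) = \tfrac{1}{\lambda}\sum_{n\ge0} (a/f(\lambda))^n$ and inverting term-by-term to get the moment expansion $q(t,a) = \sum_{n\ge0} \bE[L_t^n]\, a^n/n!$. Showing that this series has infinite radius of convergence --- equivalently, $(\bE[L_t^n]/n!)^{1/n} \to 0$ --- is the key analytic task; the required moment bounds follow from the density representation $l_t(s)= bg_s(t) + (\bar\nu * g_s)(t)$ of \Cref{prop:double laplace transform l}(i) together with the exponential decay encoded by $\hat l_\lambda(s) = (f(\lambda)/\lambda) e^{-sf(\lambda)}$, yielding super-exponential tails of $L_t$. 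Consequently $a \mapsto q(t,a)$ is entire and $q(t,A)$ is well-defined for every square matrix $A$.

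With $q(t,A)$ defined, I would compare Laplace transforms in $t$. Under the matrix function calculus, substituting $A$ for $a$ in $\hat q(\lambda, a)$ gives $\widehat{q(\cdot, A)}(\lambda) = f(\lambda)/[\lambda(f(\lambda)-A)]$, while the proof of \Cref{thm:timechangedSG}(iii) already established $\hat T_\lambda = f(\lambda)/[\lambda(f(\lambda)-A)]$ via \Cref{prop:double laplace transform l}(ii) and \Cref{lem:matrix_laplace}. Entry-wise uniqueness of the Laplace transform forces $\bE[e^{L_tA}] = q(t,A)$ for all $t \ge 0$. Combining with \Cref{cor:solution pol}, which gives $\overrightarrow{\cT_t u} = \bE[e^{L_tA}]\vec u$, yields
\[
\cT_t u(x) \;=\; H(x)^\top \overrightarrow{\cT_t u} \;=\; H(x)^\top q(t,A)\vec u,
\]
as claimed.

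The main obstacle is the well-definedness of $q(t,A)$: if $q(t, \cdot)$ were only finitely differentiable at an eigenvalue $\xi$ of $A$ whose Jordan block has size $m_k$, then formula (\ref{eq:f_jordan}) already requires $q(t,\cdot) \in C^{m_k-1}$ at $\xi$, so the entire-ness (or at least sufficient smoothness on the spectrum) is the crux of the proof. An appealing alternative that bypasses the moment-growth estimate would be to work directly with the Jordan decomposition $A = Q^{-1}JQ$, compute $\bE[e^{L_t J_k(\xi)}] = \sum_{j=0}^{m_k-1}\bE[L_t^j e^{\xi L_t}]\,N^j/j!$ using the nilpotent part $N$ of each block, and match this block-by-block to the Higham formula (\ref{eq:f_jordan}) for $q(t,J_k(\xi))$ via differentiation under the expectation $q^{(j)}(t,\xi) = \bE[L_t^j e^{\xi L_t}]$, which is justified by the finiteness of all moments of $L_t$.
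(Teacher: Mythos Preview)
Your proposal is correct and arrives at the same endpoint as the paper, but the paper's route is considerably shorter. Both proofs identify the scalar solution as $q(t,a)=\bE[e^{aL_t}]$ via the $N=1$ case of \Cref{thm:timechangedSG}. From there you propose (i) proving $a\mapsto q(t,a)$ is entire by a moment expansion with explicit growth bounds on $\bE[L_t^n]$, and then (ii) identifying $q(t,A)$ with $\bE[e^{L_tA}]$ by matching their Laplace transforms in $t$. The paper shortcuts both steps: since $q(t,-a)=\bE[e^{-aL_t}]$ is precisely the Laplace transform of the law of $L_t$, finiteness for all real $a$ (already given by \Cref{thm:timechangedSG}(i)) means the abscissa of convergence is $\zeta_c=-\infty$, whence $q(t,\cdot)$ is entire by the general analyticity of Laplace transforms discussed in \Cref{sec:matrix}. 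Then \Cref{lem:matrix_laplace} applies directly with $\zeta_c=-\infty$ to give $q(t,A)=\int_0^\infty e^{As}\,l_t(ds)=\bE[e^{L_tA}]$ in one stroke, without any detour through the $t$-Laplace domain or Jordan blocks. Your moment-growth argument and your block-by-block alternative would both work, but they reprove by hand what \Cref{lem:matrix_laplace} was set up to deliver; the paper's argument is essentially three lines once you recognise $q(t,\cdot)$ as a Laplace transform in the $a$-variable.
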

In other words, whenever we can solve the linear fractional scalar differential equation \eqref{problem:linear_FDE_scalar}, we can also give a formula for \emph{moments of arbitrary order} of $X_{L_t}$. 
\begin{proof}
Let $q(t,-a) = \bE[e^{-aL_t}]$ be the Laplace transform of $L_t$. By \Cref{thm:timechangedSG}(i) $q(t,a)$ is well defined for all $t \ge 0$ and $a \in \bR$. Hence, the abscissa of convergence of $\bE[e^{-aL_t}]$ is $\zeta_c = -\infty$, i.e., $q(t,a)$ is an entire function for every $t \ge 0$. By \Cref{lem:matrix_laplace} it follows that $q(t,A)$ is well defined for every matrix $A \in \bR^{N \times N}$ and $q(t,A) = \bE[e^{L_tA}]$. The moment formula now follows from \Cref{thm:timechangedSG} in combination with \Cref{cor:solution pol}.
\end{proof}

\subsection{Stochastic Representations} \label{sub:stochastic_sub}
\dbt{
We provide some stochastic representations of the subordinated polynomial process $Y_t = X_{L_t}$. Recall that $X$ is a semimartingale with respect to $\bF = (\cF_t)_{t \geq 0}$ and that $\bH$ denotes the time-changed filtration defined by $\cH_t = \cF_{L_t}, t \geq 0$.}
\begin{prop}\dbt{Let $f \in \cB\cF$ with triplet $(b, 0,\nu)$, satisfying $\nu(0,\infty) = \infty$ or $b > 0$, and let $L_t$ be the corresponding inverse subordinator.
\begin{enumerate}[(a)]
\item On any of the filtered probability spaces $(\Omega, \cF, \bH, \bP_x), x \in S$, the process $Y_t = X_{L_t}$ is a special semimartingale, and its characteristics $(B', C', \eta')$ associated to the ``truncation function'' $\chi(\xi) =\xi$ satisfy 
\begin{align}\label{eq:B_sub}
 &B'_{t,i}=\int_0^{t} \mathfrak{b}_i(Y_{s}) dL_s,\\
 & C'_{t,ij}+\int_0^t\int_{\mathbb{R}^n}\xi_i\xi_j \nu'(ds, d\xi)= \int_0^{t}  \mathfrak{a}_{ij}(Y_{s}) dL_s,\label{eq:C_sub}
\end{align}
where $\mathfrak{b}_i \in \cP_1$ and
$\mathfrak{a}_{ij} \in \cP_2$, for all $i,j \in \set{1, \dotsc, d}$. Moreover, $C$ and $\nu$ can be written as
\begin{align}\label{eq:C_eta_sub}
C'_{t,ij}=\int_{0}^{t}c_{s,ij}dL_s, \quad \eta'(\omega; [0,t], d\xi)=\int_0^t K'_{\omega,s}(d\xi)dL_s,
\end{align}
where $(c_{ij})_{i,j \leq n}$ is a predictable process and $K'_{\omega,t}(d\xi)$ is a predictable random measure on $(\mathbb{R}^n, \mathcal{B}(\mathbb{R}^n))$, which satisfies 
\begin{align}\label{eq:K sub}
\int_{\mathbb{R}^{n}} \xi^{\mathbf{k}} K'_{\omega,t}(d\xi) =\sum_{|\mathbf{l}|=0}^{|\mathbf{k}|}\alpha_{\mathbf{l}} Y_t^{\mathbf{l}}(\omega), \quad \textrm{for almost all } t \geq 0,
\end{align}
with some coefficients $\alpha_{\mathbf{l}}$, for every multi-index $\mathbf{k} \in \bN_0^d$ with $|\mathbf{k}| \ge 3$.
\item If $b = 0$, then $Y$ is not an Ito semimartingale; more precisely the probability that the characteristics $(B', C', \nu')$ are absolutely continuous is zero. 
\item Let $X$ be a polynomial diffusion as in \Cref{prop:stochastic}(b). Then $Y_t = X_{L_t}$ satisfies the SDE
\begin{equation}\label{eq:SDE_sub}
dY_t = \mathfrak{b}(Y_t)dL_t + \sigma(Y_t) dW_{L_t},
\end{equation}
\end{enumerate}
where $W$ is a $d$-dimensional Brownian motion, and where, for every $i,j \in \set{1, \dotsc, d}$, it holds that $\mathfrak{b}_i \in \cP_1$ and
$\mathfrak{a}_{ij}  = (\sigma \sigma^\top)_{ij} \in \cP_2$.
}
\end{prop}
\begin{proof}
Following \cite{Kobayashi2011}, a process $X$ is \emph{in synchronization} with a time-change $L$, if $X$ is a.s.\ constant on every stochastic interval $[L_{t-},L_t]$. The condition that $\nu(0,\infty) = \infty$ or $b > 0$ guarantees that the associated subordinator $\sigma^f$ is a strictly increasing L\'evy process, cf. \cite{Bertoin99}. Thus, its inverse $L$ has continuous paths, and it follows that any process $X$ is in synchronization with $L$. In the language of \cite{Jacod03}, \emph{``$X$ est adapt\'e $L$''} and we can apply \cite[Thm.~10.17]{Jacod03} and \cite[Thm.~10.19]{Jacod03} to obtain \eqref{eq:B_sub} and the first part of \eqref{eq:C_eta_sub} from \Cref{prop:stochastic}. Regarding the third characteristic $\eta'$, \cite[Thm.~10.27]{Jacod03} and \eqref{eq:C_eta} yield
\begin{align*}
\eta'(\omega, [0,t], d\xi) = \eta(\omega, [0,L_t], d\xi) = \int_0^{L_t} K_{\omega,s}(d\xi) ds = \int_0^t K_{\omega,L_s} dL_s.
\end{align*}
Setting $K'_{\omega,s}(d\xi) = K_{\omega,L_s}(d\xi)$ we obtain the second part of \eqref{eq:C_eta_sub}. Now, using \eqref{eq:K}, it holds for any multi-index $\mathbf{k} \in \bN_0^d$ that
\begin{align*}
\int_{\mathbb{R}^{n}} \xi^{\mathbf{k}} K'_{\omega,t}(d\xi) &= \int_{\mathbb{R}^{n}} \xi^{\mathbf{k}} K_{\omega,L_t}(d\xi) = \sum_{|\mathbf{l}|=0}^{|\mathbf{k}|}\alpha_{\mathbf{l}} X_{L_t}^{\mathbf{l}}(\omega) = 
\sum_{|\mathbf{l}|=0}^{|\mathbf{k}|}\alpha_{\mathbf{l}} Y_{t}^{\mathbf{l}}(\omega),
\end{align*}
which is \eqref{eq:K sub}. A similar calculation, reaplacing $\xi^{\mathbf{k}}$ by $\xi_i\xi_j$ shows \eqref{eq:C_sub}, completing part (a).
\par 
\dbt{To show (b), suppose that $b > 0$. Then, by \cite[Prop~1.8]{Bertoin99}, the range of the subordinator $\sigma^f$ has Lebesgue measure zero a.s. The range of $\sigma^f$ is equal to the support of the Stieltjes measure $dL_t$, cf. \cite[Prop~1.8]{Bertoin99}, and hence $L$ cannot be absolutely continuous. Thus $Y$ is not an Ito semimartingale.}\par
\dbt{Assertion (c) follows from \cite[Thm.~4.2]{Kobayashi2011}, using the fact that $X$ is in synchronization with $L$.}
\end{proof}

\section{Fractional polynomial processes}
In this section, we assume that $X$ is $m$-polynomial and that $L_t$, $t \geq 0$, is the inverse $\alpha$-stable subordinator for $\alpha \in (0,1)$, as discussed in \Cref{ex:Mittag Leffler}. For this setting, we present a moment formula for $(X_{L_t})_{t \geq 0}$ in \Cref{sec:moment formula} which serves as pendant to \Cref{thm:moment formula for polynomial processes} for inverse $\alpha$-stable subordinated polynomial processes. Additionally, we discuss cross-moments in equilibrium and their long-range dependence, as well as the underlying correlation structure of the time-changed process in \Cref{sec:cross moments} and \Cref{sec:correlation}, respectively.
\subsection{Moment formula}\label{sec:moment formula}
According to \Cref{cor:gen_moment_formula}, moments of inverse subordinated polynomial processes can be determined as solutions to linear fractional differential equations. For \(\alpha\)-stable subordination, the following corollary provides an analytic expression for computing these moments.
%------------------EIGENFUNCTIONS CAPUTO DERIVATIVE------------------
%\begin{rem}
%	The eigenfunctions of the Caputo fractional derivative are $f(t)=E_\beta(\lambda t^\beta)$ (Mittag-Leffler function with one parameter) \cite[page 36]{Sikorskii12}. This function is continuous on $[0,\infty)$ and completely monotone \cite[page 6]{Toaldo19}.
%\end{rem}

\begin{thm}[Moment formula II]\label{cor:moment formula for time-changed pol processes}
%Suppose that the assumptions in \Cref{thm:timechangedSG} are satisfied with $f(\lambda)=\lambda^\alpha$ for $\alpha\in(0,1)$. 
\dbt{Let $L$ be the inverse process of an $\alpha$-stable subordinator $\sigma^f$ with $\alpha \in (0,1)$.} Let $X$ be $m$-polynomial with generator $\cG$ and let $A \in \bR^{N \times N}$ be the matrix representation of $\cG$ in a basis $H(x)$ of $\cP_m$. For all $p\in\cP_m$ and $x\in S$ we have
\begin{align*}
	\bE_x\left[p(X_{L_t})\right]
	=H(x)^\top E_{\alpha}(t^\alpha A)\vec{p},\quad t>0,
\end{align*}
where $E_{\alpha}(\cdot)$, for $\alpha>0$, is the Mittag Leffler function defined in \cref{def:mittag leffler function}.
\end{thm}
\begin{proof}
	Since $\sigma^f$ is $\alpha$-stable we have $a=b=0$ and $\bar\nu(ds)=s^{-\alpha}\Gamma(1-\alpha)^{-1}\mathds{1}_{(0,\infty)}(s)\,ds$. In view of \Cref{example:classical caputo} we get 
	\begin{align*}
		\bD^f_tq(t)
		=\bD^\alpha_tq(t)
		=\frac{1}{\Gamma(1-\alpha)}\frac{d}{dt}\int_0^t(t-s)^{-\alpha}(q(s)-q(0))\,ds.
	\end{align*}
	Then the generalized fractional Kolmogorov backward equation in (\ref{problem:linear FDE1}) becomes
	\begin{align*}%\label{problem:linear FDE}
	\begin{cases}
		\bD^\alpha_tq(t)=Aq(t),\quad 0<t<\infty,\\
		 q(0)=\vec{p}\in\bR^N,
	\end{cases}
	\end{align*}
	which is known to admit the explicit solution $t\mapsto E_{\alpha}(t^\alpha A)\vec{p}$, cf. \cite[p. 131]{Garrappa18}. In view of \Cref{cor:solution pol} where $\cT_tp(x)=\bE_x\left[p(X_{L_t})\right]$, for $x\in S$, we conclude
	\begin{align*}
		\overrightarrow{\cT_tp}
		=E_{\alpha}(t^\alpha A)\vec{p},\quad t>0,
	\end{align*}
	which proves the assertion. 
\end{proof}

\begin{example}[Inverse $\alpha$-stable time-changed Brownian motion]\label{ex:time-changed BM}\dbt{
	Let $X$ denote a Brownian motion on $\bR$ with generator $\cG=\frac{1}{2}\frac{d^2}{dx^2}$.
	%and let $L$ be the inverse of an $\alpha$-stable subordinator, $\alpha\in(0,1)$. 
	Trivially, $X$ is polynomial and in view of \Cref{cor:moment formula for time-changed pol processes} we get
\begin{align}\label{eq:explicit moments BM}
	\bE_x\left[p(X_{L_t})\right]
	=H(x)^\top E_{\alpha}(t^\alpha \cG|_{\cP_n})\vec{p},\quad x\in \bR,
\end{align}
for all $p\in\cP_n$. Applying $\cG$ to the basis of monomials $(x^0,x^1,\dots,x^{n})$ of $\cP_n$ yields 
\begin{align*}
	G_n\coloneqq\cG|_{\cP_n}
	=\begin{pmatrix}
    0 & 0 & \binom{2}{2}=1 & 0 & \cdots & 0 & 0 \\
    0 & 0 & 0 & \binom{3}{2}=3 & 0 & \cdots & 0 \\
    \vdots & & & & \ddots & & \vdots \\
    0 & & \cdots & & 0 & \binom{n-1}{2} & 0 \\
    0 & & & \cdots & & 0 & \binom{n}{2} \\
    0 & & & \cdots & & 0 & 0 \\
    0 & & &  & \cdots &  & 0
	\end{pmatrix}.
\end{align*}
Using the power series representation of the Mittag-Leffler function, we can calculate the moments of $X_{L_t}$ as
\begin{equation}\label{eq:moment_BM}
\bE_0[X_{L_t}^n] = e_0^\top E_\alpha(t^\alpha G_n) e_n = e_0^\top \sum_{l=0}^\infty\frac{(t^\alpha G_n)^l}{\Gamma(\alpha l+1)} e_n  =\sum_{l=0}^\infty\frac{t^{\alpha l}}{\Gamma(\alpha l+1)} e_0^\top G_n^l e_n.
\end{equation}
The last factor in each summand is the top right element of the matrix power $G_n^l$. Now $G_n$ is a band matrix, with a single band of non-zero elements $g_{ij}$, described by the relation $j = i+2$. A moment of reflection reveals that also $G_n^l$ is a band matrix, with a single band of non-zero elements $g^{(l)}_{ij}$, described by $j = i + 2l$. Thus, if $n$ is odd, then $e_0^\top G_n^l e_n = 0$ for all $l \in \bN_0$, and we conclude that $\bE_0[X_{L_t}^{2k+1}] = 0$ for all $k \in \bN_0$ and $t \geq 0$. (Of course, a simple conditioning argument gives the same conclusion). If $n$ is even, we write $n = 2k$ and conclude that
\[e_0^\top G_{2k}^l e_{2k} = \begin{cases}0 \quad &\text{if} \quad k \neq l\\
\prod_{m=1}^k \binom{2m}{2} \quad &\text{if} \quad k = l\end{cases}.\]
Inserting into \eqref{eq:moment_BM}, we obtain
\begin{equation*}
\bE_0[X_{L_t}^{2k}] = \frac{t^{\alpha k}}{\Gamma(\alpha k+1)} \prod_{m=1}^k \binom{2m}{2} = \frac{t^{\alpha k} k!}{\Gamma(\alpha k+1)}(2k-1)!!,
\end{equation*}
where $(2k-1)!!$ is the double factorial, cf.~\cite[\S5.4]{NIST:DLMF}. In the boundary case $\alpha=1$, the subordinator $L$ becomes the identity, and our formula is consistent with the moments of Brownian motion without subordination.}
% Now the Mittag Leffler function applied on $t^\alpha G_n$ can be computed as follows: 
% \begin{align*}
% 	E_{\alpha}(t^\alpha G_n)
% 	=\sum_{l=0}^\infty\frac{(t^\alpha G_n)^l}{\Gamma(\alpha l+1)}
% 	=\sum_{l=0}^{\lfloor\frac{n}{2}\rfloor+1}\frac{(t^\alpha G_n)^l}{\Gamma(\alpha l+1)}
% \end{align*} 
% where we used that $G_n$ is nilpotent with $G_n^l=0$ for $\lfloor\frac{n}{2}\rfloor+1<l\in\bN$. It is 
% \begin{align*}
% 	(t^{\alpha}G_n)^l_{ij}
% 	=\begin{cases}
% 		t^{\alpha l}\prod_{m=i}^{l-1+i}\frac{(2m-1)^2+2m-1}{2},  \quad &\text{if}\ j=2l+i\ \text{and}\ j\leq n+1,\\
% 		0,\quad &\text{otherwise}.
% 	\end{cases}
% \end{align*}
% For $x=0$, $n\in\bN$, and $n\leq n$, \cref{eq:explicit moments BM} gives 
% \begin{align*}
% 	\bE_0[X_{L_t}^n]
% 	=e_0^\top \left(\sum_{l=0}^{\lfloor\frac{n}{2}\rfloor+1}\frac{(t^\alpha G_n)^l}{\Gamma(\alpha l+1)}\right) e_{n},
% \end{align*}
% where $e_{n,i}=\delta_{n,i}$ for $i\in\{0,\dots,n\}$. Since $X$ is polynomial above formula holds for all $n\in\bN$. It is then straightforward to verify that
% \begin{align*}
% 	\bE_0[X_{L_t}^{2n}]
% 	&=\frac{t^{\alpha n}}{\Gamma(\alpha n+1)}\prod_{m=2n+1}^{3n}\frac{(2m-1)^2+2m-1}{2}\\
% 	&=\frac{2t^{\alpha n}}{\Gamma(\alpha n+1)}\left(\frac{(3n)!}{(2n+1)!}\right)^2, 
% \end{align*}
% and $\bE_0[X_{L_t}^{2n+1}]=0$ for all $n\in\bN$ and $t\geq0$.
\end{example}
\begin{rem}
	The calculation of the matrix Mittag-Leffler function \( E_{\alpha}(A) \) for \( A=\cG|_{\cP_n} \) follows different methods based on the properties of \( A \). If \( A \) is nilpotent which translates to a generator $\cG$ that is strictly degree reducing on $\cP_l$, for $l\leq n$, the series expansion \dbt{reduces to a finite sum}, as demonstrated in \Cref{ex:time-changed BM}. If \( A \) is diagonalizable, \( E_{\alpha}(A) \) can be explicitly computed using the Jordan normal form of $A$, as shown in \Cref{rem:long range dependence}. In all other cases, numerical approximations have to be considered, see \cite{Garrappa18} for full details.
\end{rem}

\subsection{Cross-moments in equilibrium}\label{sec:cross moments}

%---------------------MORE ON LIMITING DISTRIBUTIONS-----------------------
%https://mathoverflow.net/questions/364325/properties-of-the-total-variation-norm-on-space-of-totally-finite-measure-from
Let $X$ be a polynomial process on $S$ with extended generator $\cG$. \dgt{For $k\in\bN$} we call a probability measure $\mu$ on $S$ \emph{$k$-limiting} for $X$ if
\begin{align*}
\lim_{t \to \infty} \bE_x[u(X_t)] = \int_S u(y) \mu(dy),
	%\int_S\mathds{1}_B(x)\,\mu(dx)
\end{align*}
for all $u \in\cP_k$ and $x\in S$. In other words, a probability measure $\mu$ is $k$-limiting, if all moments of $X$ up to order $k$ converge to the moments of $\mu$ as $t$ goes to infinity. It is obvious that a $k$-limiting measure is not unique, even when it is limiting for all $k\in \bN$, since probability measures can in general not be uniquely characterized by their moments. From \cite[Prop.~A.6]{Filipovic19}, we have the following:
\begin{itemize}
\item If $\left. \cG \right|_{\cP_{k+1}}$ is zero-stable for some $k\in \bN$, then there is an $k$-limiting measure $\mu$.
\end{itemize}

\begin{prop}[Moments in equilibrium]\label{lem:stat result polynomial processes}
	Assume $A=\cG|_{\cP_k}$ is zero-stable for fixed $k\in\{1,\dots,m\}$. Let $v$ denote the left eigenvector corresponding to the eigenvalue $0$ 
	%\info{extendable to semisimple? check \cite{Filipovic19}.} 
	(i.e. $v^\top A=0$), normalized such that its first element is one. Then for each $p\in\cP_k$ and all $x\in S$ we have
	\begin{align*}%\label{eq: stationary expected value}
		\lim_{t\to\infty}\bE_x[p(X_t)]
%		=v^\top\vec{p}
%		\quad\text{and}\quad
		=\lim_{t\to\infty}\bE_x[p(X_{L_t})]
		=v^\top\vec{p}.
	\end{align*}
	Moreover, if $\mu$ is $k$-limiting for $X$, for all $t\geq0$, then
	\begin{align}\label{eq: stationary distrib pol}
		\bE_\mu[p(X_t)]
		=\bE_\mu[p(X_{L_t})]
		%=\lim_{t\to\infty}\bE_x[p(X_t)].
		=v^\top\vec{p}.
	\end{align}
\end{prop}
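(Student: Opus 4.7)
The plan is to stay entirely within the matrix representation supplied by \Cref{thm:moment formula for polynomial processes}: writing $\bE_x[p(X_t)] = H(x)^\top e^{tA}\vec{p}$, the non--time-changed claim reduces to identifying $P := \lim_{t\to\infty} e^{tA}$ and evaluating $H(x)^\top P \vec{p}$. Using the Jordan decomposition $A = Q^{-1} J Q$, zero-stability forces the Jordan block at $\xi = 0$ to be $1\times 1$ while every remaining block corresponds to an eigenvalue with strictly negative real part. Hence $e^{tJ} \to \diag(1, 0, \dots, 0)$ exponentially as $t \to \infty$, so $P$ is the rank-one spectral projector onto the zero-eigenspace and admits a factorisation $P = u w^\top$ where $A u = 0$, $w^\top A = 0$, and $w^\top u = 1$.

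The next step is to identify $u = e_1$. Since the constant polynomial $h_1 \equiv 1$ lies in the kernel of $\cG$, i.e.\ $\cG 1 = 0$, the first basis element furnishes a right eigenvector of $A$ at zero, so $A e_1 = 0$. The normalisation $v_1 = 1$ then ensures $v^\top e_1 = 1$, and the unique rank-one factorisation of $P$ gives $P = e_1 v^\top$. Evaluating, $H(x)^\top P \vec{p} = h_1(x)\, v^\top\vec{p} = v^\top \vec{p}$, independently of $x$. For the inverse-subordinated version, \Cref{cor:solution pol} gives $\bE_x[p(X_{L_t})] = H(x)^\top \bE[e^{L_t A}] \vec{p}$, and I would interchange limit and expectation via dominated convergence: $L_t \to \infty$ almost surely as $t\to\infty$ (a standard property of the inverse $\alpha$-stable subordinator), while \Cref{lem:A_estimate}(ii) furnishes the uniform bound $\lVert e^{sA}\rVert \le M$ for all $s \ge 0$ that supplies the dominating function.

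For the stationary identity, I would first show $\int_S H(y)^\top \mu(dy) = v^\top$ by testing the $m$-limiting property against each basis element $h_i$: the previously established first part gives $\int_S h_i(y)\,\mu(dy) = \lim_{t\to\infty} \bE_x[h_i(X_t)] = v_i$. Fubini and the moment formula then yield $\bE_\mu[p(X_t)] = v^\top e^{tA} \vec{p}$, which collapses to $v^\top \vec{p}$ for every $t \ge 0$ because $v^\top A = 0$ implies $v^\top e^{tA} = v^\top$ term by term in the exponential series. The same invariance transfers to the time-changed case: $\bE_\mu[p(X_{L_t})] = v^\top \bE[e^{L_t A}] \vec{p} = \bE[v^\top] \vec{p} = v^\top \vec{p}$. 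The main hurdle I anticipate is the identification $P = e_1 v^\top$, which is the one place where the abstract generator property $\cG 1 = 0$ must be converted into a concrete statement about a column of $A$; once this is in place the remainder is a mechanical combination of the Jordan decomposition, dominated convergence, and the left-invariance $v^\top e^{sA} = v^\top$.
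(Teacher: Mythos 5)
Your argument is correct, and its overall skeleton coincides with the paper's: Jordan decomposition of the zero-stable matrix $A$, identification of $\lim_{t\to\infty}e^{tA}$ as the rank-one projector $e_1 v^\top$ (you fix the constant via the normalisation $w^\top u=1$ together with $v_1=1$, where the paper instead plugs in $p\equiv 1$ at the end --- a cosmetic difference), and, for the $m$-limiting part, testing against the basis monomials to get $\int_S H(x)^\top\mu(dx)=v^\top$ and then using the left-invariance $v^\top e^{tA}=v^\top$. The one place you genuinely diverge is the time-changed limit $\lim_{t\to\infty}\bE_x[p(X_{L_t})]$: the paper passes through Moment Formula II and asserts $\lim_{t\to\infty}E_\alpha(At^\alpha)=ce_0v^\top$, i.e.\ it leans on the asymptotics of the matrix Mittag--Leffler function, whereas you interchange limit and expectation in $\bE[e^{L_tA}]$ via dominated convergence, using $L_t\to\infty$ a.s.\ and the uniform bound $\lVert e^{sA}\rVert\le M$ from \Cref{lem:A_estimate}(ii). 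Your route is arguably cleaner: it is self-contained (the paper's Mittag--Leffler limit is stated without justification and, for non-diagonalisable $A$, would require controlling derivatives of $E_\alpha$ on Jordan blocks), and it applies verbatim to any inverse subordinator with $L_t\to\infty$ a.s., not just the $\alpha$-stable one. The paper's route, in exchange, keeps everything expressed through the explicit Mittag--Leffler representation used elsewhere in that section. No gaps.
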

\begin{proof}
	Let $A=QJQ^{-1}$ denote the Jordan decomposition of A, where the top-left Jordan block of $J$ contains only the simple eigenvalue $0$. Note that all other diagonal elements of $J$ are strictly negative since $A$ is zero-stable. The columns of $Q$ represent the right (generalized) eigenvectors $A$ and the rows of $Q^{-1}$ represent the left (generalized) eigenvectors of $A$. In particular, the first column of $Q$ is a multiple of $e_0=(1,0,\dots,0)^\top$ and the first row of $Q^{-1}$ is a mulitple of $v$. Hence, 
	\begin{align*}
		\lim_{t\to\infty}e^{tA}
		=Q\lim_{t\to\infty}e^{tJ}Q^{-1}
		=Qe_0e_0^\top Q^{-1}
		=ce_0v^\top,
	\end{align*}
	for some $c\in\bR$. Now let $p\in\cP_k$ and $x\in S$. Since $X$ is $m$-polynomial, we get 
\begin{align*}
	\lim_{t\to\infty}\bE_x[p(X_t)]
	=H(x)^\top\lim_{t\to\infty}e^{tA}\vec{p}
	%=cH(x)^\top e_0v^\top\vec{p}
	=cv^\top\vec{p},
\end{align*}
where $H$ is a polynomial basis $H(x)^\top=(1,h_1(x),\dots,h_{N-1}(x))$ and $N=\dim\cP_k$. With \Cref{cor:moment formula for time-changed pol processes} 
%and the fact that $E_\alpha(At^\alpha)=QE_\alpha(Jt^\alpha)Q^{-1}$
%and the Mittag-Leffler function (\cref{def:mittag leffler function}) 
we similarly get 
\begin{align*}
	\lim_{t\to\infty}\bE_x[p(X_{L_t})]
	=H(x)^\top\lim_{t\to\infty}E_\alpha(At^\alpha)\vec{p}
%	=cv_0^\top E_{\alpha}(At^\alpha)\vec{p}
%	=cv_0^\top E_{\alpha}(\lambda_0t^\alpha)\vec{p}
	=cv^\top\vec{p}.
\end{align*}
Plugging in the constant polynomial $p\equiv 1$ forces $c=1$, and shows the first assertion.
%Now we show \cref{eq: stationary distrib pol}. 
Next, we assume $\mu$ is $k$-limiting for $X$. Then 
\begin{align*}%\label{eq:stationary definition}
	%\bE_\mu[p(X_t)]
	\int_Sp(x)\,\mu(dx)
	=\lim_{t\to\infty}\bE_x[p(X_t)],
\end{align*}
for all $p\in\cP_k$ and $x\in S$. 
Plugging in the monomial $\vec{p}=e_i$ yields
\begin{align*}
	\int_S h_i(x)\,\mu(dx)=v^\top e_i=v_i,\quad 1\leq i\leq N-1,
\end{align*}
and hence
\begin{align*}%\label{eq: technical equation stationary 2}
	\int_S H(x)\,\mu(dx)=v.
\end{align*}
Since $X$ is polynomial we get
\begin{align*}
	\bE_\mu[p(X_t)]
	=\int_S\bE_x[p(X_t)]\,\mu(dx)
	&=\left(\int_SH(x)^\top\,\mu(dx)\right)e^{tA}\vec{p}\\
	&=v^\top e^{tA}\vec{p}
	=v^\top\vec{p},	
\end{align*}
where in the last equality we used that $v^\top A=0$ for all $t\geq0$. Similarly, \Cref{cor:moment formula for time-changed pol processes} gives 
\begin{align*}
	\bE_\mu[p(X_{L_t})]
	=\int_S\bE_x[p(X_{L_t})]\,\mu(dx)
	&=\left(\int_SH(x)^\top\,\mu(dx)\right)E_{\alpha}(At^\alpha)\vec{p}\\
	&=v^\top E_{\alpha}(At^\alpha)\vec{p}
	=v^\top\vec{p}.
\end{align*}
\end{proof}
What follows is an auxiliary Lemma which allows us to determine the distribution of matrix scaled increments of $L_t$ in terms of Mittag Leffler functions.%which are introduced in \Cref{ex:Mittag Leffler}. 
\begin{lem}\label{prop:Laplace of increments}
	Let $L_t$, $t\geq0$, be the inverse of an $\alpha$-stable subordinator, $\alpha\in(0,1)$. Let $A$ be a zero-stable matrix, fix $s,t\geq0$, and let $F_{s,t}$ denote the cumulative distribution function (cdf) of $L_{t+s}-L_t$. Then the Laplace transform $\hat F_{s,t}$ is defined on the spectrum of $-A$ and given by 
	\begin{align}\label{eq:laplace increments}
		\hat{F}_{s,t}(-A)
		=\frac{-\alpha A(t+s)^\alpha}{\Gamma(1+\alpha)}\int_0^{\frac{t}{t+s}}\frac{E_{\alpha}(A(t+s)^\alpha(1-z)^\alpha)}{z^{1-\alpha}}\,dz+E_{\alpha}(A(t+s)^\alpha).
	\end{align}
%	\begin{align*}
%		\bE[e^{A(L_{t+s}-L_t)}]
%		&=\frac{-\alpha A^2}{\Gamma(1+\alpha)}\int_0^t\frac{(t+s-y)^\alpha}{y^{1-\alpha}} E_{\alpha,\alpha+1}(A(t+s-y)^\alpha)\,dy\\
%	&\hspace{5mm}+E_{\alpha}(A(t+s)^\alpha)
%	-\frac{At^\alpha}{\Gamma(1+\alpha)},
%	\end{align*}
	where $E_{\alpha}(\cdot)$, for $\alpha>0$, is the Mittag Leffler function defined in \cref{def:mittag leffler function}.
\end{lem}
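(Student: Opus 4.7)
The plan is to first establish the scalar version of \eqref{eq:laplace increments}, namely
\begin{equation*}
\hat F_{s,t}(\mu) = E_\alpha(-\mu(t+s)^\alpha) + \frac{\alpha\mu(t+s)^\alpha}{\Gamma(1+\alpha)}\int_0^{t/(t+s)}\frac{E_\alpha(-\mu(t+s)^\alpha(1-z)^\alpha)}{z^{1-\alpha}}\,dz
\end{equation*}
for scalar $\mu\in\bR$, and then to lift it to the matrix argument $-A$ via the Jordan-form functional calculus from \Cref{sec:matrix}. Since $F_{s,t}$ is a probability distribution on $[0,\infty)$, it has bounded variation with $F_{s,t}(\infty)=1<\infty$, so the hypotheses of \Cref{lem:matrix_laplace}(iii) apply to the zero-stable matrix $A$. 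This guarantees both that $\hat F_{s,t}$ is defined on the spectrum of $-A$ and that $\hat F_{s,t}(-A)=\int_0^\infty e^{At}\,dF_{s,t}(t)$. The right-hand side of the scalar identity is, moreover, an entire function of $\mu$ (the Mittag--Leffler function is entire, and analyticity under the integral sign holds by dominated convergence on compacts in $\mu$), so it extends unambiguously to any matrix argument via $f(A)=Q\,\diag(f(J_k))\,Q^{-1}$. Evaluating the scalar identity at $\mu=-A$ then reproduces \eqref{eq:laplace increments} directly.

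For the scalar identity, I would exploit the strong Markov property of the underlying $\alpha$-stable subordinator $\sigma$ at the stopping time $L_t$. Setting $\tilde\sigma_u := \sigma_{L_t+u}-\sigma_{L_t}$, the shifted process is an independent copy of $\sigma$; denoting its inverse by $\tilde L$, one obtains the key representation
\begin{equation*}
L_{t+s}-L_t = \tilde L_{(t+s-\sigma_{L_t})_+},\qquad \tilde L \perp (L_t,\sigma_{L_t}).
\end{equation*}
Conditioning on $\sigma_{L_t}$ and using $\bE[e^{-\mu\tilde L_u}] = E_\alpha(-\mu u^\alpha)$ then yields
\begin{equation*}
\hat F_{s,t}(\mu) = \int_t^{t+s} E_\alpha(-\mu(t+s-r)^\alpha)\,\rho_t(r)\,dr + \bP(\sigma_{L_t}\ge t+s),
\end{equation*}
where $\rho_t(r) = \frac{\sin(\pi\alpha)}{\pi}\cdot\frac{t^\alpha}{r(r-t)^\alpha}$ for $r>t$ is the $\alpha$-stable overshoot density (Fristedt's formula) and $\bP(\sigma_{L_t}\ge t+s) = I_{t/(t+s)}(\alpha,1-\alpha)$ is the regularized incomplete beta function. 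Expanding $E_\alpha$ as its defining power series and evaluating each term through the Euler beta integral $B(a,b)=\Gamma(a)\Gamma(b)/\Gamma(a+b)$, followed by the substitution $z = 1 - r/(t+s)$ together with a regrouping that combines the tail probability with the $k=0$ term of the expansion, produces the target right-hand side after invoking the identity $\alpha/\Gamma(1+\alpha)=1/\Gamma(\alpha)$.

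The main obstacle lies in this last reduction. The natural overshoot decomposition yields an integral over $r\in[t,t+s]$ whereas the target presents everything over $z\in[0,t/(t+s)]$; matching the two requires careful bookkeeping of the change of variable and of how the tail probability $\bP(\sigma_{L_t}\ge t+s)$ combines with the Mittag--Leffler expansion. The coefficient-wise identity for each power of $\mu$ then reduces to $\Gamma$-function manipulations via the beta integral, which is routine once the correct pairing of terms has been identified. All other ingredients---the strong Markov representation, Fristedt's overshoot formula, and the matrix lift through \Cref{lem:matrix_laplace}---are standard, so the scalar computation is where the effort concentrates.
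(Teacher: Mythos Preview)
Your overall architecture matches the paper's: prove the scalar identity for $\hat F_{s,t}(\mu)$ and then lift to $-A$ via \Cref{lem:matrix_laplace}(iii), using zero-stability of $A$ and the fact that $F_{s,t}$ is a cdf. For the scalar part, however, the paper does not argue at all: it writes $\hat F_{s,t}(\lambda)=\iint e^{-\lambda|v-u|}\,l_{t+s,t}(v,u)\,dv\,du$ and invokes \cite[Theorem~3.1]{Leonenko13} verbatim for the explicit evaluation. Your overshoot-based derivation is therefore a genuinely different, self-contained route, and the representation
\[
\hat F_{s,t}(\mu)=\int_t^{t+s}E_\alpha\bigl(-\mu(t+s-r)^\alpha\bigr)\,\rho_t(r)\,dr+\bP(\sigma_{L_t}\ge t+s)
\]
you obtain from the strong Markov property and the Dynkin--Lamperti overshoot density is correct.

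The gap is in your final reduction. The substitution $z=1-r/(t+s)$ maps $r\in[t,t+s]$ to $z\in[0,s/(t+s)]$, not $[0,t/(t+s)]$; equivalently, in your integral the Mittag--Leffler argument $t+s-r$ ranges over $[0,s]$, while in the target $(t+s)(1-z)$ ranges over $[s,t+s]$. These intervals meet only at the endpoint $s$, so no change of variables alone can match the two integrands. The identity between the two expressions is of course true (both equal $\bE[e^{-\mu(L_{t+s}-L_t)}]$), but establishing it from your representation requires more than ``substitution plus regrouping'': one must either expand both sides in powers of $\mu$ and match coefficients through incomplete-beta identities, or integrate by parts against the tail $\bP(\sigma_{L_t}>r)$ to bring in the potential density $u^{\alpha-1}/\Gamma(\alpha)$ and then reparametrise. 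Either route works but is substantially longer than what you sketch; as written, this step does not go through.
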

\begin{proof}
	For arbitrary $\lambda>0$ we can write
	\begin{align*}
		\hat{F}_{s,t}(\lambda)
		&=\int_0^\infty e^{-\lambda x}\,F_{s,t}(dx)\\
		&=\int_0^\infty\int_0^\infty e^{-\lambda(v-u)}\,l_{t+s,t}(v,u)\,dv\,du\\
		&=\int_0^\infty\int_0^\infty e^{-\lambda|v-u|}\,l_{t+s,t}(v,u)\,dv\,du
	\end{align*} 
	where we used the fact that $L_{t+s}>L_t$ a.s. Moreover, 
	in \cite[Theorem 3.1]{Leonenko13} the authors 
	%compute the correlation function of a fractional Pearson diffusion in terms of Mittag Leffler functions which boils down to the identity
	prove the identity
	\begin{align*}
		&\int_0^\infty\int_0^\infty e^{-\lambda|v-u|}\,l_{t+s,t}(v,u)\,dv\,du\\
		&\hspace{5mm}=\frac{\alpha \lambda(t+s)^\alpha}{\Gamma(1+\alpha)}\int_0^{\frac{t}{t+s}}\frac{E_{\alpha}(-\lambda(t+s)^\alpha(1-z)^\alpha)}{z^{1-\alpha}}\,dz+E_{\alpha}(-\lambda(t+s)^\alpha),
	\end{align*}
	for arbitrary $\lambda>0$.
	%is the smallest positive eigenvalue of the corresponding backward generator, cf. \cite[Chapter 3]{Leonenko13}. 
	Since $A$ is zero-stable we can apply \Cref{lem:matrix_laplace} and plug $-A$ into $\hat{F}_{s,t}$ which shows the assertion. 	
\end{proof}
\dgt{To introduce a formula for cross-moments we recall that the multiplication map $\mull_q$, as defined in (\ref{eq:multiplication map}) for $q\in\cP_k$, admits a matrix representation $M_q$ with respect to a basis $H$ of $\cP_k$.}% where $\overrightarrow{pq}=M_q\vec{p}$.
\begin{thm}[Cross-moments in equilibrium]\label{lem:autocovariance}
\dbt{
Let $q \in \cP_k$, $p \in \cP_n$, and let $X$ be an $m$-polynomial process, where $k+n \le m$. Assume that $\mu$ is $m$-limiting for $X$. Denote by $H_n$ and $H_m$ two bases of $\cP_{n}$ and $\cP_m$ respectively. Furthermore, 
	assume that $\cG|_{\cP_{m}}$ is zero-stable, \dgt{and let $M_q$ denote the matrix representation of $\mull_q\colon\cP_n\to\cP_{m}$.}
	%Moreover, let $\tilde H$ denote a basis of $\cP_{2k}$ and 
	Set $A=\cG|_{\cP_k}$ and let $v$ denote the left eigenvector of $\cG|_{\cP_{m}}$ corresponding to the simple eigenvalue $0$ (i.e. $v^\top\cG|_{\cP_{m}}=0$), normalized such that its first element is one. Then
	\begin{align*}
		\bE_\mu[p(X_{t+s})q(X_t)]
		=v^\top M_qe^{sA}\vec{p} \qquad \forall\,s,t \geq 0.
	\end{align*}
	In addition, the time-changed process satisfies
	\begin{align*}
	\bE_\mu\left[p(X_{L_{t+s}})q(X_{L_t})\right] %&\hspace{5mm}=v^\top\mul\left(\vec{q},\bE[e^{A(L_{t+s}-L_t)}]\vec{p}\right)\\
	%&=v^\top\mul(\vec{q},\hat{F}_{s,t}(-A)\vec{p})\\
	=v^\top M_q\hat F_{s,t}(-A)\vec{p}
    \qquad\forall\,s,t \geq 0,
    %	\frac{-\alpha A^2}{\Gamma(1+\alpha)}\int_0^t\frac{(t+s-y)^\alpha}{y^{1-\alpha}} E_{\alpha,\alpha+1}(A(t+s-y)^\alpha)\,dy\vec{p}\\
%	&\hspace{25mm}+E_{\alpha}(-A(t+s)^\alpha)\vec{p}-\frac{At^\alpha}{\Gamma(1+\alpha)}\vec{p}\big),
\end{align*}
where $\hat F_{s,t}$ is given by \eqref{eq:laplace increments}.}
%where $F_{s,t}$ is the cdf of $L_{t+s}-L_t$ and $\hat{F}_{s,t}(-A)$ is provided in \Cref{eq:laplace_increments}. 
	%\textcolor{red}{$v(s,\cdot)=\mul(\hat l_s(-A)\vec{p},\vec{q})(\cdot)$.}
\end{thm}
\begin{proof}
\dbt{
We use the notation of the statement and note that without loss of generality $H_m$ can be chosen such that it extends $H_n$, i.e. set   
\begin{align*}
	H_m=(h_0,\dots,h_{n-1},h_{n},\dots,h_{m}),
\end{align*}
where $H_n=(h_0,\dots,h_n)$ is an ordered basis of $\cP_{n}$. 
%, i.e. $\tilde h_i=h_i$ for $0\leq i\leq N$. 
Now using the tower law, the conditional moment formula (\ref{eq:conditional moment formula}), and \Cref{lem:stat result polynomial processes} one gets
\begin{align*}
	\bE_\mu[p(X_{t+s})q(X_t)]
	&=\bE_\mu\left[q(X_t)\bE[p(X_{t+s})\mid X_t]\right]\\
	&=\bE_\mu\left[q(X_t)H_n(X_t)^\top e^{(t+s-t)A}\vec{p}\right]\\
	%&=\bE_\mu\left[H_n(X_t)^\top \vec{q}\ H_n(X_t)^\top e^{sA}\vec{p}\right]\\
	&=\bE_\mu\left[H_m(X_t)^\top M_qe^{sA}\vec{p}\right]\\
	&=v^\top M_qe^{sA}\vec{p},
\end{align*}
which shows the first statement. For the proof of the second statement, we first note that again with (\ref{eq:conditional moment formula}) and the independence of $X$ and $L$ we derive
\begin{align*}
	\bE\left[p(X_{L_{t+s}})\mid X_{L_t}, L_t\right]
		&=\bE\left[\bE\left[p(X_{L_{t+s}})\mid X_{L_t},L_t,L_{t+s}\right]\mid X_{L_t}, L_t\right]\\
		&=\bE\left[H_n(X_{L_t})^\top e^{(L_{t+s}-L_t)A}\vec{p}\mid X_{L_t}, L_t\right]\\
		&=H_n(X_{L_t})^\top \bE\left[e^{(L_{t+s}-L_t)A}\vec{p}\mid L_t\right]\\
		&=H_n(X_{L_t})^\top\left(\int_0^\infty e^{(v-L_t) A}l_{t+s}(dv\mid L_t)\right)\vec{p}\\
		&=H_n(X_{L_t})^\top \;\varpi(s\mid L_t)\;\vec{p},
%		&=\bE\left[\bE\left[
%		p(Y_{t+s})\mid Y_t,L_t,L_{t+s}\right]\mid Y_t\right]\\
%		&=\bE\left[
%		H(Y_t)^\top e^{(L_{t+s}-L_t)A}\vec{p}\mid Y_t\right]\\
%		&=H(Y_t)^\top\bE\left[
%		e^{L_{s}A}\vec{p}\mid Y_t\right]\\
%		&=H(Y_t)^\top\hat l_s(-A)\vec{p},
\end{align*}
where $l_{t+s}(dv\mid L_t)$ is the distribution of $L_{t+s}$ conditioned on $L_t$, and
\begin{align*}
	\varpi(s\mid L_t=u)
		&=\int_0^\infty e^{(v-u) A}l_{t+s}(dv\mid L_t=u)\\
		&=l_t(u)^{-1}\int_0^\infty e^{(v-u)A}l_{t+s,t}(v,u)\,dv,\quad s,u\in(0,\infty).
\end{align*}
%and $l_{t+s}(dv\mid L_t)=\mathds{1}_{\{v\geq L_t\}}l_{t+s}(dv\mid L_t)$ since $L_t$, $t\geq0$, is non-decreasing. 
Using above calculations and (\ref{eq:conditional moment formula}), we get 
\begin{align*}
	\bE_\mu\left[p(X_{L_{t+s}})q(X_{L_t})\right]
	&=\bE_\mu\left[q(X_{L_t})\bE\left[p(X_{L_{t+s}})\mid X_{L_t}, L_t\right]\right]\\
	%&=\bE\left[q(X_{L_t})H(X_{L_t})^\top e^{-L_tA}\varpi(s,L_t)\vec{p}\right]\\
	%&=\bE\left[\bE\left[q(X_{L_t})\mid \cF_{L_t}\right]H(X_{L_t})^\top \left(e^{-L_tA}\varpi(s,L_t)\vec{p}\right)\right]\\
%	&=\bE_\mu\left[H^1(X_{L_t})^\top \vec{q}\  H^1(X_{L_t})^\top \;\varpi(s\mid L_t)\;\vec{p}\right]\\
	&=\bE_\mu\left[H_m(X_{L_t})^\top M_q\;\varpi(s\mid L_t)\;\vec{p}\right].
\end{align*}
Conditioning on $L_t$ and \Cref{lem:stat result polynomial processes} further yield
\begin{align*}
	%\bE_\mu\left[p(X_{L_{t+s}})q(X_{L_t})\right]
%	&=\bE\left[H(X_{L_t})^\top e^{L_t A}\vec{q}H(X_{L_t})^\top e^{-L_tA}\varpi(s,L_t)\vec{p}\right]\\
%	&=\bE\left[\tilde H(X_{L_t})^\top \mul\left(e^{L_t A}\vec{q},e^{-L_tA}\varpi(s,L_t)\vec{p}\right)\right]\\
	&\bE_\mu\left[H_m(X_{L_t})^\top M_q\;\varpi(s\mid L_t)\;\vec{p}\right]\\
	&\hspace{5mm}=\bE_\mu\left[\bE_\mu\left[H_m(X_{L_t})^\top M_q\;\varpi(s\mid L_t)\;\vec{p}\mid L_t\right]\right]\\
	%&\hspace{5mm}=\bE_\mu\left[\tilde H(X_0)^\top e^{L_tA_2}\mul\left(\vec{q},\;\varpi(s\mid L_t)\;\vec{p}\right)\right]\\
	%&=\bE_\mu\left[\tilde H(\cdot)^\top e^{L_t\tilde A}\mul\left(\vec{q},\;\varpi(s\mid L_t)\;\vec{p}\right)\right]\\
%	&=\left(\int_S\tilde H(x)^\top\mu(dx)\right)\bE\left[e^{L_t\tilde A}\mul\left(\vec{q},\;\varpi(s\mid L_t)\;\vec{p}\right)\right]\\
	&\hspace{5mm}=\bE\left[v^\top M_q\;\varpi(s\mid L_t)\;\vec{p}\right].
	%&\hspace{5mm}=v^\top \bE\left[\mul\left(\vec{q},\;\varpi(s\mid L_t)\;\vec{p}\right)\right].
\end{align*} 
Plugging in $\varpi$ gives
%The fact that $v^\top uA_2=0$, for all $u\in\bR$, gives
\begin{align*}
	%&\bE_\mu\left[p(X_{L_{t+s}})q(X_{L_t})\right]\\
	&\bE\left[v^\top M_q\;\varpi(s\mid L_t)\;\vec{p}\right]\\
	%&v^\top \bE\left[e^{L_tA_2}\mul\left(\vec{q},\;\varpi(s\mid L_t)\;\vec{p}\right)\right]\\
	&\hspace{5mm}=v^\top M_q\left(\int_0^\infty \;\varpi(s\mid L_t=u)\;l_t(u)\,du\right)\vec{p}\\\nonumber
	%&\hspace{5mm}=v^\top\int_0^\infty M_ql_t(u)^{-1}\int_0^\infty 
	%e^{(v-u) A}l_{t+s,t}(v,u)\,dv\,\vec{p}l_t(u)\,du\\\nonumber
	&\hspace{5mm}=v^\top M_q\left(\int_0^\infty\int_0^\infty
	e^{(v-u) A}l_{t+s,t}(v,u)\,dv\,du\right)\vec{p}.
	%&\hspace{5mm}=v^\top\mul\left(\vec{q},\hat{F}_{s,t}(-A)\vec{p}\right),
%	&\hspace{5mm}=\int_0^\infty\int_0^\infty
%	B(u,v)l_{t+s,t}(u,v)\,dv\,du
\end{align*}
%where in the last step we used \Cref{lem:A_estimate}. Note that
%\Cref{prop:Laplace of increments} allows us to write  
An appeal to \Cref{prop:Laplace of increments} shows
\begin{align*}%\label{eq:laplace_increments}
	%&=\int_0^\infty\int_0^\infty e^{-\lambda|v-u|}\,l_{t+s,t}(v,u)\,dv\,du\\
	&\int_0^\infty\int_0^\infty
	e^{(v-u) A}l_{t+s,t}(v,u)\,dv\,du
	=\hat{F}_{s,t}(-A)\\
	&\hspace{5mm}=\frac{-\alpha A(t+s)^\alpha}{\Gamma(1+\alpha)}\int_0^{\frac{t}{t+s}}\frac{E_{\alpha}(A(t+s)^\alpha(1-z)^\alpha)}{z^{1-\alpha}}\,dz+E_{\alpha}(A(t+s)^\alpha),
\end{align*}
where $F_{s,t}$ is the cdf of $L_{t+s}-L_t$ and which concludes the proof. 
% A direct application of \Cref{prop:Laplace of increments} yields\todo{change accordingly to assertion.}
%\begin{align*}
%	&\bE_\mu\left[p(X_{L_{t+s}})q(X_{L_t})\right]\\
%	&\hspace{5mm}=v^\top\mul\big(\vec{q},\frac{-\alpha A^2}{\Gamma(1+\alpha)}\int_0^t\frac{(t+s-y)^\alpha}{y^{1-\alpha}} E_{\alpha,\alpha+1}(A(t+s-y)^\alpha)\,dy\vec{p}\\
%	&\hspace{25mm}+E_{\alpha}(A(t+s)^\alpha)\vec{p}-\frac{At^\alpha}{\Gamma(1+\alpha)}\vec{p}\big),
%\end{align*}
%which was to show.}
}
\end{proof}

\begin{rem}[Long-range dependence]
\label{rem:long range dependence}
	Polynomial processes with zero-stable generator matrix exhibit short-range dependence, as their cross moments in equilibrium decay exponentially fast (see \Cref{lem:autocovariance}). However, under inverse $\alpha$-stable time change, this class of processes exhibits long-range dependence, with the aforementioned quantities decaying according to a power law with exponent $\alpha \in (0, 1)$. This property has been shown in \cite{Leonenko13} for the autocovariance and autocorrelation of time-changed Pearson diffusions (i.e., one-dimensional polynomial diffusions). Our results extend this property to the multivariate case and allow to include jumps, even with state-dependent behaviour. In more detail, fix $k\in\bN$, $\alpha\in(0,1)$, and assume all assumptions of \Cref{lem:autocovariance} are satisfied. Moreover, assume \(A =\mathcal{G}|_{\mathcal{P}_k}\) is diagonalisable, which, under mild parameter constraints, holds true for \dbt{most} examples considered in this paper. Its Jordan decomposition is given as
	%We know from \cite[(2.14)]{Piryatinska05} that $E_{\alpha}(-\beta t^\alpha)\sim\frac{1}{\Gamma(1-\alpha)\beta t^\alpha}$ ($t\to\infty$). 
	\begin{align*}
		A=QDQ^{-1},
	\end{align*}
	where $D=\diag(0,-\lambda_1,\dots,-\lambda_l)$ with $\lambda_1,\dots,\lambda_l>0$ and the columns of $Q$ are the right eigenvectors of $A$. Note that in this case, for $t\geq0$, the matrix Mittag Leffler function can be decomposed as
	\begin{align*}
		E_{\alpha}(At^\alpha)
		=QE_{\alpha}(Dt^\alpha)Q^{-1}.
	\end{align*}
	where $E_{\alpha}(Dt^\alpha)=\diag(1,E_{\alpha}(-\lambda_1 t^\alpha),\dots,E_{\alpha}(-\lambda_l t^\alpha))$. Let $v$ denote the left eigenvector to the leading eigenvalue of $\cG|_{\cP_{2k}}$ and let $p,q\in\cP_k$. Then, for $s,t\geq0$, \Cref{lem:autocovariance} yields
	\begin{align*}
		\bE_\mu[p(X_{L_{t+s}})q(X_{L_t})]
		=v^\top M_qQ \hat F_{s,t}(-D)
%		\Big[\frac{-\alpha D^2}{\Gamma(1+\alpha)}\int_0^t\frac{(t+s-y)^\alpha}{y^{1-\alpha}} E_{\alpha,\alpha+1}(D(t+s-y)^\alpha)\,dy\\
%	&\hspace{25mm}+E_{\alpha}(D(t+s)^\alpha)-\frac{Dt^\alpha}{\Gamma(1+\alpha)}\Big]
	Q^{-1}\vec{p},
	\end{align*}
	where $\hat F_{s,t}(-D)$ is again a diagonal matrix, and given by
	\[\hat F_{s,t}(-D)=\diag(1,\hat F_{s,t}(\lambda_1),\dots,\hat F_{s,t}(\lambda_l)),\]
	with $\hat F_{s,t}$ given by \eqref{eq:laplace increments}.
		%the $i+1$-th diagonal entry of the matrix $B$, for $i\in\{1,\dots,m\}$ is given as 
%	\begin{align*}
%		B_i(s,t)
%		&=\frac{-\alpha\lambda_i^2}{\Gamma(1+\alpha)}\int_0^t\frac{(t+s-y)^\alpha}{y^{1-\alpha}}E_{\alpha,\alpha+1}(-\lambda_i(t+s-y)^\alpha)\,dy\\
%		&\hspace{20mm}+E_{\alpha}(-\lambda_i (t+s)^\alpha)+\frac{\lambda_i t^\alpha}{\Gamma(1+\alpha)}\\
%		&=\frac{\alpha\lambda_i}{\Gamma(1+\alpha)}\int_0^t\frac{E_{\alpha}(-\lambda_i(t+s-y)^\alpha)}{y^{1-\alpha}}\,dy\\
%		&\hspace{20mm}+E_{\alpha}(-\lambda_i (t+s)^\alpha)+\frac{\lambda_i t^\alpha}{\Gamma(1+\alpha)}\\
%		=\frac{\alpha\lambda_i(t+s)^\alpha}{\Gamma(1+\alpha)}\int_0^{\frac{t}{t+s}}\frac{E_{\alpha}(-\lambda_i(t+s)^\alpha(1-z)^\alpha)}{z^{1-%\alpha}}\,dz+E_{\alpha}(-\lambda_i (t+s)^\alpha).
%	\end{align*}
	In \cite[Remark 3.3]{Leonenko13}, the authors show that for any fixed $t\geq0$,
	\begin{align*}
		\hat F_{s,t}(\lambda_i) \sim R_{s,t}(\lambda_i)\coloneqq
		\frac{1}{(t+s)^\alpha\Gamma(1-\alpha)}\left(\frac{1}{\lambda_i}+\frac{t^\alpha}{\Gamma(1+\alpha)}\right)\quad\text{as}\ s\to\infty,
	\end{align*}
	which shows that for any fixed $t\geq0$ we have 
	\begin{align}\label{eq:R}
		\bE_\mu[p(X_{L_{t+s}})q(X_{L_t})]
		\sim v^\top M_qQR_{s,t}(D)Q^{-1}\vec{p}\quad\text{as}\ s\to\infty,
	\end{align}
	where $R_{s,t}(D)=\diag(1,R_{s,t}(-\lambda_1),\dots,R_{s,t}(-\lambda_l))$. Since $R_{s,t}(-\lambda_i)\to 0$ for $s\to\infty$, we get for all $t\geq0$, and $i\in\{1,\dots,m\}$, using \Cref{lem:stat result polynomial processes}, that
    \dbt{
	\begin{align*}
		\lim_{s\to\infty}\bE_\mu[p(X_{L_{t+s}})q(X_{L_t})]
		&=v^\top M_qQ\diag(1,0,\dots,0)Q^{-1}\vec{p} = v^\top M_q e_0\;v^\top \vec{p}\\
		&=v^\top \vec{q}\; v^\top\vec{p}
		=\mu(p) \mu(q),
	\end{align*}
    }
	where $\mu(p)\coloneqq\int_S p(x)\,\mu(dx)$ and analogously for $q$. Concluding, since the right term in (\ref{eq:R}) is a linear combination of entries of $R$ we get, as $s\to\infty$,
	\begin{align*}
		\bE_\mu[p(X_{L_{t+s}})q(X_{L_t})]
		\dbt{\sim \mu(p) \mu(q)}
		+\frac{1}{(t+s)^\alpha\Gamma(1-\alpha)}\sum_{i=1}^lc_i\left(\frac{1}{\lambda_i}+\frac{t^\alpha}{\Gamma(1+\alpha)}\right),
	\end{align*}
	where $c_i\in\bR$ is a constant depending on $p$, $q$, and $Q$.	
	\end{rem}
	
\subsection{Correlation structure in equilibrium}\label{sec:correlation}
In terms of applications, \Cref{lem:autocovariance} allows to determine the correlation structure for various types of inverse $\alpha$-stable subordinated polynomial processes in equilibrium, as demonstrated in the following corollary and subsequent examples. The correlation function of a stochastic process $X$ with initial distribution $\mu$ at times $s,t \geq 0$ is defined as
\begin{align}\label{eq:correlation function}
	\corr_\mu\left(X_t,X_s\right)
		%&=\frac{\cov\left(X_{L_t},X_{L_s}\right)}{\sqrt{\var(X_{L_t})\var(X_{L_s})}\\
		=\frac{\bE_\mu\left[X_tX_s\right]-\bE_\mu\left[X_t\right]\bE_\mu\left[X_s\right]}{\sqrt{\left(\bE_\mu[X_t^2]-\bE_\mu\left[X_t\right]^2\right)\left(\bE_\mu[X_s^2]-\bE_\mu\left[X_s\right]^2\right)}}.
\end{align}
\begin{cor}\label{cor:correlation function}
	Let $X$ be $2$-polynomial with one-dimensional state space $S \subseteq \bR$ and with $2$-limiting distribution $\mu$. Suppose that the assumptions in \Cref{lem:autocovariance} are satisfied with $k=1$, i.e. $\cG|_{\cP_1}$ has eigenvalues $0$ and $-\beta<0$. Then, for $s,t\geq0$, it holds that 
	\begin{align*}
		&\corr_\mu(X_{L_{t+s}},X_{L_t}) = \hat F_{s,t}(\beta) \\
	&\hspace{5mm}=\frac{\alpha\beta(t+s)^\alpha}{\Gamma(1+\alpha)}\int_0^{\frac{t}{t+s}}\frac{E_{\alpha}(-\beta (t+s)^\alpha(1-z)^\alpha)}{z^{1-\alpha}}\,dz+E_{\alpha}(-\beta (t+s)^\alpha).
	\end{align*} 
\end{cor}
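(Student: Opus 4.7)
The plan is to apply \Cref{lem:autocovariance} with $p(x)=q(x)=x\in\cP_1$ and to reduce the resulting expression to $\hat F_{s,t}(\beta)$ by exploiting the two-dimensional spectral structure of $A=\cG|_{\cP_1}$. Throughout, set $H^1(x)=(1,x)^\top$, $H^2(x)=(1,x,x^2)^\top$ and $\vec p=\vec q=(0,1)^\top$.

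First I would invoke \Cref{lem:stat result polynomial processes} for $k=1$ and $k=2$ to see that $m:=\bE_\mu[X_{L_t}]$ and $\bE_\mu[X_{L_t}^2]$, and hence $\sigma^2:=\var_\mu(X_{L_t})$, are time-independent in equilibrium; in particular the denominator of \eqref{eq:correlation function} reduces to $\sigma^2$ for both time-indices. This reduces the problem to computing the covariance $\cov_\mu(X_{L_{t+s}},X_{L_t})$.

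Second, I would apply \Cref{lem:autocovariance}, which yields
\begin{equation*}
\bE_\mu[X_{L_{t+s}}X_{L_t}] \;=\; \tilde v^\top\,\mul\bigl((0,1)^\top,\hat F_{s,t}(-A)(0,1)^\top\bigr),
\end{equation*}
where $\tilde v$ is the normalised left null-vector of $\cG|_{\cP_2}$. The key algebraic step is to evaluate $\hat F_{s,t}(-A)$: since the Markov generator $\cG$ annihilates constants, $A$ is upper-triangular with eigenvalues $0$ and $-\beta$, and is therefore diagonalisable. Combining the spectral calculus of \Cref{sec:matrix} with $\hat F_{s,t}(0)=1$ (total mass of the increment law of $L_{t+s}-L_t$) and writing $c:=\hat F_{s,t}(\beta)$ produces a $2\times 2$ matrix whose action on $(0,1)^\top$ is the $\cP_1$-coordinate vector of the polynomial $m(1-c)+cx$. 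Multiplying by $x$ and re-coordinatising in $H^2$, then pairing against $\tilde v$, gives
\begin{equation*}
\bE_\mu[X_{L_{t+s}}X_{L_t}] \;=\; m^2 + \hat F_{s,t}(\beta)\,\sigma^2,
\end{equation*}
so that $\cov_\mu(X_{L_{t+s}},X_{L_t}) = \hat F_{s,t}(\beta)\,\sigma^2$; dividing by $\sigma^2$ establishes the first equality. The second equality is then immediate upon substituting $\lambda=\beta$ into the explicit expression for $\hat F_{s,t}$ from \Cref{prop:Laplace of increments}.

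The main obstacle is the bookkeeping at the end of the second step: one must verify that the constant part produced by the eigenvalue-$0$ component of $\hat F_{s,t}(-A)$ contributes exactly $m^2=\bE_\mu[X_{L_t}]\bE_\mu[X_{L_{t+s}}]$ when paired with $\tilde v$, so that the mean-squared terms cancel cleanly in the numerator of \eqref{eq:correlation function}. This compatibility relies on the observation that the left null-vector of $\cG|_{\cP_2}$, restricted to the $\cP_1$-indices, agrees with the left null-vector of $\cG|_{\cP_1}$, which in turn follows from the uniqueness of the equilibrium moments up to order $2$ provided by \Cref{lem:stat result polynomial processes}.
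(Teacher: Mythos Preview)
Your proposal is correct and follows essentially the same route as the paper: the paper also writes $A=\cG|_{\cP_1}=\begin{psmallmatrix}0&\gamma\\0&-\beta\end{psmallmatrix}$, diagonalises it explicitly, computes $\hat F_{s,t}(-A)\vec p=\bigl(\tfrac{\gamma}{\beta}(1-\hat F_{s,t}(\beta)),\hat F_{s,t}(\beta)\bigr)^\top$ (your $m(1-c)+cx$), pairs against the left null-vector $v=(1,\gamma/\beta,c)^\top$ of $\cG|_{\cP_2}$ via $\mul$, and cancels the $(\gamma/\beta)^2$ terms. The only minor difference is that the paper obtains the compatibility of the first two components of $v$ with the left null-vector of $\cG|_{\cP_1}$ from the block-triangular structure of $\cG|_{\cP_2}$ in the monomial basis, whereas you deduce it from the uniqueness of equilibrium moments in \Cref{lem:stat result polynomial processes}; both arguments are equivalent here.
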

\begin{proof}
Using the notation of the statement we have 
\begin{align}\label{eq:Jordan form of A}
		A = \cG|_{\cP_1}&=
		\begin{pmatrix}
			0 & \phi \\
			0 & -\beta 
		\end{pmatrix}
		=\begin{pmatrix}
			1&-\frac{\phi}{\beta}\\
			0&1
		\end{pmatrix}
		\begin{pmatrix}
			0&0\\
			0&-\beta
		\end{pmatrix}
		\begin{pmatrix}
			1&\frac{\phi}{\beta}\\
			0&1
		\end{pmatrix},
\end{align}
for some $\phi\in\bR$. Note that $\cG|_{\cP_1}=(\cG_{\cP_2})_{3,3}$, where $(\cG_{\cP_2})_{3,3}$ denotes the submatrix of $\cG_{\cP_2}$ with the third row and third column removed. It is then easy to verify that every left-eigenvector of $\cG_{\cP_2}$ corresponding to the eigenvalue $-\beta$ is of the form $(v_1,v_2,c)^\top$ where $c\in\bR$ and $(v_1,v_2)^\top$ is a left-eigenvector of $\cG_{\cP_1}$ corresponding to $\lambda$. 
%geht das auch einfacher zu zeigen? 
Therefore, by \cref{eq:Jordan form of A}, we deduce that $v=(1,\frac{\phi}{\beta},c)^\top$, for some $c\in\bR$, is the left-eigenvector of $\cG|_{\cP_2}$ corresponding to the eigenvalue $0$. Now, for all $t\geq0$, \Cref{lem:stat result polynomial processes} yields
\begin{align*}
	\bE_\mu[X_{L_t}]&=v^\top(0,1,0)^\top=\frac{\phi}{\beta},\\
	\bE_\mu[X_{L_t}^2]&=v^\top(0,0,1)^\top=c.
\end{align*}
For fixed $s,t\geq0$, \cref{eq:correlation function} then gives 
\begin{align*}
	\corr_\mu(X_{L_{t+s}},X_{L_t})
	=\frac{\bE_\mu[X_{L_{t+s}}X_{L_t}]-(\frac{\phi}{\beta})^2}{c-(\frac{\phi}{\beta})^2}.
\end{align*}
In order to determine the remaining cross-moment we apply \Cref{lem:autocovariance}. 
%Note that for $\alpha,\beta>0$ and $x\in\bR$ the two parameter Mittag Leffler function 
%%(\cref{def:mittag leffler function}) 
%at $Ax$ is given as  
%	\begin{align}\label{eq:matrix mittag leffler explained}
%		E_{\alpha}(Ax)
%		&=\begin{pmatrix}
%			1&-\frac{\phi}{\beta}\\
%			0&1
%		\end{pmatrix}
%		\begin{pmatrix}
%			1&0\\
%			0&E_{\alpha}(-\beta x)
%		\end{pmatrix}
%		\begin{pmatrix}
%			1&\frac{\phi}{\beta}\\
%			0&1
%		\end{pmatrix}\\\nonumber
%		&=\begin{pmatrix}
%			1&\frac{\phi}{\beta}(1-E_{\alpha}(-\beta x))\\
%			0&E_{\alpha}(-\beta x)
%		\end{pmatrix}.
%	\end{align} 
Setting $\vec{p}=(0,1)^\top$ with respect to $H_1(x)=(1,x)$ and 
\begin{align*}
	M_p=\begin{pmatrix}
		0&1&0\\
		0&0&1
	\end{pmatrix}^\top,
\end{align*}
\Cref{lem:autocovariance} gives
\[\bE_\mu\left[X_{L_{t+s}}X_{L_t}\right] = v^\top M_p\hat F_{s,t}(-A) \vec p.\]
Now, 
\begin{align*}
\hat F_{s,t}(-A) \vec p &= 
		\begin{pmatrix}
			1&-\frac{\phi}{\beta}\\
			0&1
		\end{pmatrix}
		\begin{pmatrix}
			1&0\\
			0&\hat F_{s,t}(-\beta)
		\end{pmatrix}
		\begin{pmatrix}
			1&\frac{\phi}{\beta}\\
			0&1
		\end{pmatrix}
			\begin{pmatrix}
			0\\
			1
		\end{pmatrix} =
				\begin{pmatrix}
			\frac{\phi}{\beta} (1 - \hat F_{s,t}(\beta))\\
			\hat F_{s,t}(\beta)
		\end{pmatrix},
\end{align*}
and hence
\begin{align*}
\bE_\mu\left[X_{L_{t+s}}X_{L_t}\right] &= 				
\begin{pmatrix}
			1 &
			\frac{\phi}{\beta} &
			c
		\end{pmatrix} 
		\begin{pmatrix}
		0&0\\
		1&0\\
		0&1
	\end{pmatrix}
		\begin{pmatrix}
			\frac{\phi}{\beta} (1 - \hat F_{s,t}(\beta))\\
			\hat F_{s,t}(\beta)
		\end{pmatrix} \\
		&=  \left(c - \left(\frac{\phi}{\beta}\right)^2\right) \hat F_{s,t}(-\beta) + \left(\frac{\phi}{\beta}\right)^2.
		 \end{align*}
	Concluding, our calculations show 
	\begin{align*}
		\corr_\mu(X_{L_{t+s}},X_{L_t}) = 
 \frac{(c-(\frac{\phi}{\beta})^2)  \hat F_{s,t}(\beta) + (\frac{\phi}{\beta})^2-(\frac{\phi}{\beta})^2}{c-(\frac{\phi}{\beta})^2}= \hat F_{s,t}(\beta),
	\end{align*}
	\dbt{with $c$ cancelled out}, as desired.
\end{proof}
\begin{rem}
Our result in \Cref{cor:correlation function} can be viewed as a complement to \cite[Theorem 2.1]{Leonenko14} in the $\alpha$-stable setting. In \cite{Leonenko14}, the authors show that if the process \( X_t \) has independent increments 
	the correlation function of \( X_{L_t} \) can be explicitly computed given the identity 
\begin{align*}
&\cov(X_{L_{t+s}}, X_{L_t}) \\
&\hspace{2mm}= \var(X_1)U(t) + \mathbb{E}[X_1]^2 \int_0^t (U(t+s-\tau) + U(t-\tau)) \, U(d\tau) - U(t+s)U(t),
\end{align*}
for \( s,t \geq 0 \), where $U(t)=\frac{t^\alpha}{\Gamma(1+\alpha)}$ is the renewal function of $L_t$. While we relax the requirement on \( X_t \), allowing it to be merely $2$-polynomial, we impose the existence of a $2$-limiting distribution. 
\end{rem}
Using \Cref{cor:correlation function}, we present the correlation function of a fractional one-dimensional Pearson diffusion in equilibrium. This result recovers (and coincides with) \cite[Theorem 3.1]{Leonenko13}. Note that all one-dimensional processes considered in the subsequent examples exhibit a $2$-limiting distribution $\mu$ which is straightforward to verify due to the zero-stability of their generator matrix $\cG|_{\cP_{3}}$.%which allows the application of \Cref{cor:correlation function}.
\begin{example}[Pearson Diffusions]\label{ex:pearson diffusion}
	%In \cite{Leonenko13} the correlation function of a one-dimensional fractional Pearson diffusion (see \cite{Sorensen07}) in steady state is presented which fits nicely into our framework. 
	Consider the stochastic differential equation 
	\begin{align*}
		dX_t=-\beta(X_t-\theta)\,dt+\sqrt{(a_0+a_1X_t+a_2X_t^2)}\,dW_t,
	\end{align*}
	where $\beta>0$, $a_0$, $a_1$, and $a_2$ are specified such that the square root is well-defined, and $(W_t)_{t\geq0}$ is a standard Brownian motion.
%	\begin{align*}
%		\mu(x)=a_0+a_1x,\quad\frac{\sigma(x)}{2}=d_0+d_1x+d_2x^2,\quad x\in\bR.
%	\end{align*}
	Then, $X=(X_t)_{t\geq0}$ is called a Pearson-diffusion (see \cite{Sorensen07}), which is known to be polynomial \cite[Example 3.6]{Cuchiero12}. For every $C^2$-function $g$ its extended generator $\cG$ is given as 
	%by application of Ito's formula
	\begin{align*}
		\cG g(x)=-\beta(x-\theta)\frac{dg(x)}{dx}+\frac{1}{2}(a_0+a_1x+a_2x^2)\frac{d^2g(x)}{dx^2},\quad x\in\bR,
	\end{align*}
	and one has 
	%$A=\cG|_{\cP_1}$ and $\tilde A=\cG|_{\cP_2}$ in Jordan normal form are given as
	\begin{align*}
		\cG|_{\cP_1}=
		\begin{pmatrix}
			0 & \beta\theta \\
			0 & -\beta 
		\end{pmatrix}.
	\end{align*} 
	For fixed $s,t\geq0$, a direct application of \Cref{cor:correlation function} yields 
	\begin{align*}
			\corr_\mu(X_{L_{t+s}},X_{L_t}) = \hat F_{s,t}(\beta)
\sim \frac{(t+s)^{-\alpha}}{\Gamma(1-\alpha)}\left(\frac{1}{\beta}+\frac{t^\alpha}{\Gamma(1+\alpha)}\right)\quad\text{as}\ s\to\infty,
	\end{align*}
\end{example}
\Cref{cor:correlation function} also applies to polynomial diffusions with jumps (not covered in \cite{Leonenko13}), as demonstrated in the following two examples. We note that jumps with state-dependent jump-height affect the correlation structure, in contrast to state-independent jumps.
\begin{example}[Jacobi process with jumps]
	The Jacobi process \cite{Gourieroux06} with jumps corresponding to a Poisson Process $J=(J_t)_{t\geq0}$ with intensity $\lambda$ is governed by the stochastic differential equation
\begin{align*}
    dX_t = -\beta(X_t - \theta) dt + \sigma \sqrt{X_t(1 - X_t)} dW_t+(1-2X_t)dJ_t,
\end{align*}
where $\theta \in [0,1]$ and $\beta, \sigma > 0$ and $W$ is a standard Brownian motion. It is polynomial \cite[Example 3.5]{Cuchiero12} and the size of each jump depends on the current level of the process. Specifically, when a jump takes place, the process is reflected at $1/2$, ensuring that it remains within the interval $S=[0,1]$. Its extended generator, \dbt{using the identity as truncation function}, is given by
	\begin{align*}
		\cG g(x)
		=-\beta(x-\theta)\frac{dg(x)}{dx}+\frac{1}{2}\sigma^2(x(1-x))\frac{d^2g(x)}{dx^2}+\lambda(g(1-x)-g(x)),
	\end{align*}
	where the predictable version of the corresponding jump kernel $K$ in \cite[Proposition 2.12]{Cuchiero12} is the pushforward of the L\'evy measure $\nu(d\xi)=\lambda\delta_1(d\xi)$ under the affine function $x\mapsto p_x(\xi)=-2\xi x+\xi$, for each $\xi\in\bR$, i.e. 
	\begin{align*}
		K(x,d\xi)
		=(p_x)_*\nu(d\xi)=\lambda\delta_{-2x+1}(d\xi).
	\end{align*}
	Applying $\cG$ to $(x^0,x^1)$ gives
	%and $(x^0,x^1,x^2)$, respectively, gives
	%$A=\cG|_{\cP_1}$ and $\tilde A=\cG|_{\cP_2}$ are given as 
	\begin{align*}
		\cG|_{\cP_1}
		=\begin{pmatrix}
			0&\beta\theta+\lambda\\
			0&-(\beta+2\lambda)
		\end{pmatrix}.
%		\quad 
%		\cG|_{\cP_2}
%		=\begin{pmatrix}
%			0&\beta\theta &\lambda\\
%			0&-\beta&2\beta\theta+\sigma^2-4\lambda\\
%			0&0&-2\beta-\sigma^2+4\lambda
%		\end{pmatrix}.
	\end{align*}
	Since $\beta,\lambda>0$, for fixed $s,t\geq0$, a direct application of \Cref{cor:correlation function} yields 
	\begin{align*}
		\corr_\mu(X_{L_{t+s}},X_{L_t}) = \hat F_{s,t}(2\lambda + \beta)
\sim \frac{(t+s)^{-\alpha}}{\Gamma(1-\alpha)}\left(\frac{1}{2\lambda + \beta}+\frac{t^\alpha}{\Gamma(1+\alpha)}\right),\quad\text{as}\ s\to\infty.
	\end{align*}
\end{example}
\begin{example}(L\'evy-driven Ornstein-Uhlenbeck process)
Consider for $X$ a one-dimensional stationary L\'evy-driven Ornstein Uhlenbeck process which can be characterised by the stochastic differential equation
\begin{align*}
	dX_t=-\beta(X_t-\theta)\,dt+\sigma\,dY_t,
\end{align*}
where $\beta,\sigma>0$, $Y=(Y_t)_{t\geq0}$ is a L\'evy process with characteristics $(a,b,\nu)$, and $\nu$ has finite second moments.
%, i.e.
%\begin{align*}
%	\nu_m\coloneqq\int_\bR\xi^mh(\xi)\,\nu(d\xi)<\infty,\quad m\in\{1,2\},
%\end{align*}
%with truncation function $h(\xi)=\mathds{1}_{\{\lVert\xi\rVert\leq1\}}(\xi)$. 
Then $X$ is $2$-polynomial (\cite[Example 3.3]{Cuchiero12}) and for every $C^2$-function $g$ its extended generator $\cG$ is given as
\begin{align*}
	\cG g(x)&=
	(b\sigma-\beta(x-\theta))\frac{dg(x)}{dx}
	+\frac{1}{2}(\sigma a)^2\frac{d^2g(x)}{dx^2}\\
	&\hspace{5mm}+\int_\bR\left(g(x+\sigma\xi)-g(x)-\sigma \xi\frac{dg(x)}{dx}\right)\,\nu(d\xi),
\end{align*}
\cite[Theorem 4.6.1]{Kolokoltsov11}, \dbt{using the identity as truncation function }
%\mathds{1}_{\{\lVert\xi\rVert\leq1\}}(\xi)

%Applying $\cG$ to $(1,x)$ and $(1,x,x^2)$, respectively, yields 
and 
%$A=\cG|_{\cP_1}$ and $\tilde A=\cG|_{\cP_2}$ are given as 
\begin{align*}
	\cG|_{\cP_1}
	=\begin{pmatrix}
			0&\beta\theta+b\sigma\\
			0&-\beta
		\end{pmatrix}.
%		\quad 
%		\cG|_{\cP_2}
%		=\begin{pmatrix}
%			0&\beta\theta+b\sigma&\sigma^2(a^2+\nu_2)\\
%			0&-\beta&2(\beta\theta+2b\sigma)\\
%			0&0&-2\beta
%		\end{pmatrix},
		\end{align*}
		%where 
		%$\nu_1=\int_\bR\xi(1-h(\xi))\,\nu(d\xi)$ and 
		%$\nu_2\coloneqq\int_\bR\xi^2\,\nu(d\xi)$. 
		Note that our choice of truncation function is feasible since $\nu$ has finite second moments. %In this case the drift term of $Y$ is adapted accordingly (see \cite[Chapter 2.8]{Sato99}). 
		For fixed $s,t\geq0$, a direct application of \Cref{cor:correlation function} yields 
	\begin{align*}
		\corr_\mu(X_{L_{t+s}},X_{L_t}) = \hat F_{s,t}(\beta)
\sim \frac{(t+s)^{-\alpha}}{\Gamma(1-\alpha)}\left(\frac{1}{\beta}+\frac{t^\alpha}{\Gamma(1+\alpha)}\right),\quad\text{as}\ s\to\infty.
	\end{align*}%Where can we read the moments of the L\'evy jump measure? Somehow in the generator matrix, just the constant terms. 
\end{example}
In the following and final example, we demonstrate that \Cref{lem:stat result polynomial processes} and \Cref{lem:autocovariance} can be applied to determine the correlation structure of polynomial diffusions in a multivariate setting.
Recall that the class of polynomial diffusions is closed under polynomial transformations \cite[Section 4]{Filipovic19}.
\begin{example}(Quadratic term structure model)
%\todo{rough interest rates, thorsten schmidt. modellierung illequider Aktienpreise. interest rate with illequidity effects.}
Consider a quadratic term structure model for $r$ with illiquidity effects, characterized by a concatenation of diffusive periods and motionless periods of the interest rate.
 This can be specified as nonnegative quadratic function of an inverse $\alpha$-stable subordinated one-dimensional Ornstein-Uhlenbeck process $Y$, i.e.
	\begin{align*}
		r_t=R_0+R_1Y_{t}+R_2Y_{t}^2
	\end{align*}
	for appropriate $R_i\in\bR$. Here, $L_t$, $t\geq0$, is the hitting time process of the $\alpha$-stable subordinator, $\alpha\in(0,1)$, and $Y$ is given by
	\begin{align*}
		dY_t=(b-\beta Y_t)\,dt+\sigma\,dW_t,
	\end{align*}
	where $W$ is a standard Brownian motion. The joint process $X=(Y,r)$ then satisfies the dynamics 
	\begin{align*}
		\begin{pmatrix}
			dY_t\\
			dr_t
		\end{pmatrix}
		&=\left(
		\begin{pmatrix}
			b\\
			R_1b+R_2\sigma^2 + 2R_0\beta
		\end{pmatrix}
		+\begin{pmatrix}
			- \beta\\
			2R_2b + R_1\beta
		\end{pmatrix}
		Y_t
		+\begin{pmatrix}
			0\\
			- 2\beta
		\end{pmatrix}
		r_t
		\right)\,dt\\
		&\hspace{5mm}+\begin{pmatrix}
			\sigma\\
			(R_1+2R_2Y_t)\sigma
		\end{pmatrix}
		\,dW_t
	\end{align*}
	and is therefore a polynomial process \cite[Example 3.4]{Cuchiero12}. For $C^2$-functions $g$ the extended generator of $X$ is given by 
	\begin{align*}
		\cG g(x)
		&=\frac{\partial g}{\partial x_2}(x)(R_1b+R_2\sigma^2 + 2R_0\beta+(2R_2b + R_1\beta)x_1 - 2\beta x_2)\\
		&\hspace{2mm}+\frac{\partial g}{\partial x_1}(x)(b-\beta x_1)
		+\frac{1}{2}\frac{\partial^2 g}{\partial x_1^2}(x)\sigma^2\\
		&\hspace{2mm}+\frac{\partial^2 g}{\partial x_1\partial x_2}(x)(\sigma^2R_1+\sigma^22R_2x_1)\\
		&\hspace{2mm}+\frac{1}{2}\frac{\partial^2 g}{\partial x_2^2}(x)(\sigma^2R_1^2+\sigma^24R_1R_2x_1+\sigma^24R_2x_1^2),
	\end{align*}
	\cite[Proposition 2.12]{Cuchiero12} and applying $\cG$ to $(1,x_1,x_2)$
	and $(1,x_1,x_2,x_1^2,x_1x_2,x_2^2)$, respectively, gives
	\begin{align*}
		\cG|_{\cP_1}
%		=\begin{pmatrix}
%			0&b&R_1b+R_2\sigma^2-2R_0\beta\\
%			0&\beta&2R_2b-R_1\beta\\
%			0&0&2\beta
%		\end{pmatrix}.
		&=\diag\left(0,- \beta,- 2\beta\right)+N,\\
		\cG|_{\cP_2}
		&=\diag(0,-\beta,-2\beta,-2\beta,-3\beta,-4\beta)
		+\tilde N,
	\end{align*}
	where $N$ resp. $\tilde N$ is a nilpotent upper triangular matrix. If $\beta > 0$, \Cref{lem:stat result polynomial processes} and \Cref{lem:autocovariance} are applicable and the correlation structure of $(r_{L_t})_{t\geq0}$ can be determined using \cref{eq:correlation function}. With a similar approach as in the proof of \Cref{cor:correlation function}, the exact form can be computed using symbolic calculation for example using SymPy. 
\end{example}
\section{A conjecture on state-dependent subordination}
So far, we have considered stochastic processes that result from time changing a Markov process by an \emph{independent} inverse L\'evy-subordinator. This framework has been extended by \cite{Orsingher18} to a large class of semi-Markovian processes that can be constructed by \emph{dependent} subordination. \cite{Orsingher18} start by considering a `stepped' (i.e. piecewise constant) Markov process $M = (M_t)_{t \geq 0}$ on a state space $S$, which is time-changed by the first-hitting time process $(L_t)_{t \geq 0}$ of a subordinator $(\sigma_t)_{t \geq 0}$. The subordinator $\sigma$ depends on the path of $M$ and is characterised by a state-dependent L\'evy triplet $(b,a,\nu(\cdot,x))$ where $a=b=0$ and $\nu(\cdot,x)$ is a family of measures on $(0,\infty)$, indexed by $x \in S$ and with tail $s\mapsto\bar\nu(s,x)=\nu((s,\infty),x)$ satisfying $\int_0^\infty(s\wedge1)\,\nu(ds,x)<\infty$ for each $x\in S$. Subsequently, \cite{Orsingher18} use weak limits to generalize this construction to non-stepped semi-Markov processes. Moreover, they show in \cite[p. 832]{Orsingher18} that functionals $q(t,x) = \bE_x[f(Y_t)]$ of the time-changed process $Y_t = M_{L_t}$ satisfy a Volterra integral-differential equation of the form 
	\begin{align}\label{eq:gen_reg_Rieman_Louiv}
				\begin{cases}
					\frac{d}{dt}\int_0^tq(s,\cdot)\,\bar\nu(t-s,\cdot)\,ds-\bar\nu(t,\cdot)\,q(0,\cdot)=(\cG q)(t, \cdot),\\
					q(0)=f\in\cB(S).
				\end{cases}
	\end{align}
	
In this context, we put forward the following conjecture: 
\begin{quote}
There exists a semi-Markovian process $Y$ on a state-space $S \subseteq \bR^n$, resulting from a non-trivial state-dependent time change as described in \cite{Orsingher18}, with the polynomial property, i.e. for every $m \in \bN$ and polynomial $u \in \cP_m$ we have that 
\[q(t,.): x \mapsto \bE_x[u(Y_t)] \quad \text{is again in $\cP_m$.}\]
\end{quote}
While we were not able to \dbt{rigorously} construct such a process, our conjecture is based on the following observation: There is a Markov process $X$ on $S = [0,\infty)$ and a state-dependent L\'evy measure $\nu(ds,x)$, such that the corresponding Kolmogorov equation \eqref{eq:gen_reg_Rieman_Louiv} has polynomial solutions, i.e. when $q(0) = u \in \cP_m$ then also $q(t,.) \in \cP_m$ for all $t \geq 0$. In concrete terms, we show the following: 
	
\begin{prop}\label{prop:state dependent}
%	Let $\bar\nu$ be defined as in \cref{eq:kappa}. 
%	 Moreover, if 
	Let $X$ be a solution of the SDE
		\begin{align*}
	dX_t
	= b\,dt+\sigma\sqrt{X_t}\,dW_t,\quad b,\sigma\in\bR_+, X_0 \in S = (0,\infty),
\end{align*}
 and denote its extended generator by $\cG$. Moreover, let
	\begin{align}\label{eq:kappa}
	\bar\nu(t,x)=\frac{\kappa(t)}{x},\quad t>0,\ x\in S,
\end{align}
where $\kappa$ is a non-increasing function with $\lim_{t \downarrow 0}\kappa(t) = \infty$ and $\int_0^1\kappa(s) ds < \infty$. Then, for any polynomial initial condition $u \in \cP_m$, there exists a polynomial solution $q(t,x)$ of the form 
\begin{equation}\label{eq:ansatz}
q(t,x) = c_0 + \sum_{j=1}^m x^j c_j(t) 
\end{equation}
to the Volterra integral-differential equation \eqref{eq:gen_reg_Rieman_Louiv}.
\end{prop}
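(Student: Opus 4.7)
The plan is to plug the ansatz \(q(t,x) = c_0 + \sum_{j=1}^m x^j c_j(t)\) directly into the Volterra integral-differential equation \eqref{eq:gen_reg_Rieman_Louiv} and reduce it, by matching coefficients of powers of $x$, to a decoupled system of scalar Volterra equations for the time-dependent coefficients $c_j$. First I would record that for the generator $\cG g(x) = bg'(x) + \tfrac12 \sigma^2 x g''(x)$ one has $\cG 1 = 0$ and $\cG x^j = \alpha_j x^{j-1}$ with $\alpha_j = j\bigl(b + \tfrac{\sigma^2}{2}(j-1)\bigr)$ for $j\ge 1$, so $\cG$ is strictly degree-reducing on the monomial basis of $\cP_m$.

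Next, exploiting the factorization $\bar\nu(t,x) = \kappa(t)/x$ from \eqref{eq:kappa}, the convolution inside the left-hand side of \eqref{eq:gen_reg_Rieman_Louiv} separates as
\[
\int_0^t q(s,x)\,\bar\nu(t-s,x)\,ds \;=\; \frac{c_0}{x}\int_0^t \kappa(t-s)\,ds \;+\; \sum_{j=1}^m x^{j-1}\,(c_j * \kappa)(t).
\]
Differentiating in $t$, using $\frac{d}{dt}\int_0^t\kappa(r)\,dr = \kappa(t)$, and subtracting $\bar\nu(t,x)q(0,x) = \kappa(t)u(x)/x$, the $1/x$ pole generated by the constant term $c_0$ cancels exactly against the $c_0$-contribution coming from $\bar\nu(t,x)u(x)$. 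What remains on the left is a genuine polynomial in $x$ of degree $m-1$, and this cancellation is what makes the ansatz consistent in the first place. Matching coefficients of $x^{j-1}$ on both sides of \eqref{eq:gen_reg_Rieman_Louiv} for $j=1,\dotsc,m$ then produces the decoupled scalar Volterra equations
\[
\frac{d}{dt}\int_0^t \bigl(c_j(s) - c_j(0)\bigr)\kappa(t-s)\,ds \;=\; \alpha_j\,c_j(t), \qquad c_j(0) = u_j,
\]
where $u_j$ are the monomial coefficients of the prescribed initial polynomial $u$, and we set $c_0 = u_0$.

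Finally I would observe that the assumptions imposed on $\kappa$ (non-increasing, $\lim_{t\downarrow 0}\kappa(t)=\infty$, $\int_0^1 \kappa(s)\,ds < \infty$) are precisely those ensuring that $\kappa$ arises as the tail $\bar\nu$ of a L\'evy measure with $\nu(0,\infty)=\infty$ and $\int (s\wedge 1)\,\nu(ds)<\infty$, i.e. as the L\'evy measure of a Bernstein function $f$ with triplet $(0,0,\nu)$. The scalar Volterra equation above is therefore exactly the generalized $f$-Caputo equation $\bD^f_t c_j = \alpha_j c_j$ with initial value $u_j$, which by the $N=1$ case of \Cref{thm:timechangedSG} admits the unique continuous solution $c_j(t) = u_j\,\bE[e^{\alpha_j L^f_t}]$. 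Substituting these into \eqref{eq:ansatz} yields the required polynomial solution. The main obstacle is the careful bookkeeping in the second step: justifying differentiation under the integral with the singular kernel $\kappa$ and verifying the crucial $1/x$ cancellation; a secondary technicality is that \Cref{thm:timechangedSG} assumes absolute continuity of $\bar\nu$, which one may either add as a standing assumption on $\kappa$ or circumvent by solving the scalar Volterra equation directly via Laplace transform, obtaining $\hat c_j(\lambda) = u_j\hat\kappa(\lambda)/\bigl(\lambda\hat\kappa(\lambda) - \alpha_j\bigr)$ under the hypotheses already in force.
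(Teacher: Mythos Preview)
Your proposal is correct and follows essentially the same route as the paper: plug the ansatz into \eqref{eq:gen_reg_Rieman_Louiv}, observe that the $1/x$ contribution from the constant term cancels, and match coefficients of $x^{j-1}$ to obtain decoupled scalar Volterra equations for the $c_j$. The paper simply asserts that these scalar equations ``can be solved'' without further detail, whereas you go further and identify them explicitly as the scalar $f$-Caputo equation $\bD^f_t c_j = \alpha_j c_j$, invoking the $N=1$ case of \Cref{thm:timechangedSG} (or a direct Laplace-transform argument) to produce the solution $c_j(t)=u_j\,\bE[e^{\alpha_j L^f_t}]$; your remark about the absolute-continuity hypothesis on $\kappa$ is a genuine technical point that the paper does not address.
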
 
\begin{rem}
\begin{enumerate}[(a)]
\item We note that choosing $\bar \nu(t,x)$ as in \eqref{eq:kappa} immediately yields a family of state-dependent measures $\nu(dt,x)$ by setting 
\begin{align*}
	\nu((t,\infty),x)=\bar\nu(t,x) = \frac{\kappa(t)}{x},\quad t\ge0, x \in S.
\end{align*}
Since $\kappa$ is non-increasing, $\nu(\cdot,x)$ is non-negative. Moreover, $\nu$ satisfies $\nu(0,\infty) = \infty$ and 
\begin{align*}
		\int_0^\infty(s\wedge1)\,\nu(ds,x)
		&=\int_0^1s\,\nu(ds,x)+\bar\nu(1,x)\\
		&=\int_0^1\int_0^s\,dt\,\nu(ds,x)+\bar\nu(1,x)\\
		&=\int_0^1\int_t^1\,\nu(ds,x)\,dt+\bar\nu(1,x)\\
		&=\int_0^1\bar\nu(t,x)\,dt = \frac{1}{x}\int_0^1\kappa(t)\,dt <\infty,
	\end{align*}
	for all $x\in S$ which shows that also \eqref{con:levy measure integrability} is satisfied. 
\item While \cite{Orsingher18} also provide some existence results for processes with dependent subordination, they assume that 
\begin{align*}
	f(\lambda,x)
	=\int_0^\infty(1-e^{-\lambda s})\,\nu(ds,x),\quad \lambda>0,
\end{align*}
is bounded over $x \in S$, which is not the case in our conjectured example. 
\end{enumerate}
\end{rem}

\begin{proof}
The extended generator $\cG$ of X is given, for $g \in C^2(0,\infty)$ by  
\begin{align*}
	\cG g(x)
	=b\frac{dg(x)}{dx}+\frac{1}{2}\sigma^2x\frac{d^2g(x)}{dx^2},
\end{align*}
see \cite[Section 4]{Cuchiero12}. Representing $\cG$ in the basis $H(x) = (x^0,x^1,\dots,x^m)$ yields the $(m+1)\times(m+1)$ matrix 
\begin{align*}
    	A_m = \cG|_{\cP_m}=\begin{pmatrix}
    		0 & b & 0 & \dots  \\
    		0 & 0 & 2b+\sigma^2 & 0 & \dots \\
    		0 & 0 & 0 & 3b+3\sigma^2&0&\dots \\
    		&&&&\ddots&\\
    		0&\dots&&& 0 &mb+\frac{m(m-1)}{2}\sigma^2\\
    		0&\dots&&&& 0
    	\end{pmatrix}.
\end{align*}
Applying $A_m$ to the Ansatz \eqref{eq:ansatz}, we obtain 
\begin{equation}\label{eq:RHS}
\overrightarrow{\cG q(t,x)} = \left(bc_1(t), (2b + \sigma^2) c_2(t), \dotsc, \left(mb+\frac{m(m-1)}{2}\sigma^2\right) c_m(t), 0\right)^\top,
\end{equation}
as vector representation in the basis $H(x)$ of the right hand side of \eqref{eq:gen_reg_Rieman_Louiv}. Switching to the left hand side of \eqref{eq:gen_reg_Rieman_Louiv}, we set 
\begin{align*}
	L(t,x)=\frac{d}{dt}\int_0^tq(s,x)\,\bar\nu(t-s,x)\,ds-\bar\nu(t,x)\,q(0,x),\quad t\geq0.
\end{align*}
 Plugging in \eqref{eq:kappa} and with the ansatz \eqref{eq:ansatz}, we obtain
\begin{align}
	\nonumber 
	L(t,x)&=\frac{d}{dt}\int_0^t\left(c_0 + \sum_{j=1}^m x^jc_j(s)\right)\kappa(t-s)x^{-1}\,ds\\\nonumber
	&\hspace{1cm}-\left(c_0 + \sum_{j=1}^m x^jc_j(0)\right)\kappa(t)x^{-1}\\\nonumber
%	&=\sum_{j=0}^lx^{j-1}\left(\frac{d}{dt}\int_0^tc_j(s)\kappa_1(t-s)\,ds-c_j(0)\kappa_1(t)\right)\\\nonumber
%	&\hspace{1cm}+\underbrace{\sum_{j=0}^lx^{j}\left(\frac{d}{dt}\int_0^tc_j(s)\kappa_0(t-s)\,ds-c_j(0)\kappa_0(t)\right)}_{\eqqcolon p_l(t,x)}\\\nonumber 
	&=x^{-1}c_0 \left(\frac{d}{dt}\int_0^t\kappa(t-s)\,ds-\kappa(t)\right)\\\nonumber
	&\hspace{1cm}+\sum_{j=0}^{m-1}x^j\left(\frac{d}{dt}\int_0^tc_{j+1}(s)\kappa(t-s)\,ds-c_{j+1}(0)\kappa(t)\right)\\\nonumber
    &=\sum_{j=0}^{m-1}x^j\left(\frac{d}{dt}\int_0^tc_{j+1}(s)\kappa(t-s)\,ds-c_{j+1}(0)\kappa(t)\right).
\end{align}
This is a polynomial of degree $m-1$, and we can express it in the basis $H(x)$ as vector $\overrightarrow{L(t,x)}$ with components 
\[\overrightarrow{L(t,x)}_j = \frac{d}{dt}\int_0^tc_{j+1}(s)\kappa(t-s)\,ds-c_{j+1}(0)\kappa(t), \quad j=0,\dotsc,m-1,\]
and $\overrightarrow{L(t,x)}_m = 0$. Setting $\overrightarrow{L(t,x)}$ equal to \eqref{eq:RHS} we obtain the scalar generalized fractional linear differential equations
\[\frac{d}{dt}\int_0^tc_{j+1}(s)\kappa(t-s)\,ds-c_{j+1}(0)\kappa(t) = \left(jb + \frac{j(j-1)}{2}\sigma^2\right) c_{j+1},\]
which, together with the initial conditions for $c_j(0)$, can be solved to obtain $c_{j+1}(t), j=0, \dotsc, m-1$. The coefficient $c_0$ is constant and hence directly determined from the initial condition. Together, we have shown that there is a polynomial solution $q(t,x)$ of form \eqref{eq:ansatz} to the Volterra integral-differential equation \eqref{eq:gen_reg_Rieman_Louiv}, as claimed.
\end{proof}

\printbibliography

@article{Cuchiero24,
	abstract = {We introduce a class of measure-valued processes, which --in analogy to their finite dimensional counterparts --will be called measure-valued polynomial diffusions. We show the so-called moment formula, i.e. a representation of the conditional marginal moments via a system of finite dimensional linear PDEs. Furthermore, we characterize the corresponding infinitesimal generators obtaining a representation analogous to polynomial diffusions on R+m, in cases where their domain is large enough. In general the infinite dimensional setting allows for richer specifications strictly beyond this representation. As a special case, we recover measure-valued affine diffusions, sometimes also called Dawson--Watanabe superprocesses. From a mathematical finance point of view, the polynomial framework is especially attractive since it allows to transfer many famous finite dimensional models and their tractability properties to an infinite dimensional measure-valued setting.},
	author = {Cuchiero, Christa and Di Persio, Luca and Guida, Francesco and Svaluto-Ferro, Sara},
	date = {2024/09/01/},
	date-added = {2025-08-06 15:39:43 +0200},
	date-modified = {2025-08-06 15:39:49 +0200},
	doi = {https://doi.org/10.1016/j.spa.2024.104392},
	isbn = {0304-4149},
	journal = {Stochastic Processes and their Applications},
	keywords = {Measure-valued processes; Polynomial and affine diffusions; Dawson--Watanabe type superprocesses; Martingale problem; Maximum principle},
	pages = {104392},
	title = {Measure-valued affine and polynomial diffusions},
	url = {https://www.sciencedirect.com/science/article/pii/S030441492400098X},
	volume = {175},
	year = {2024},
	bdsk-url-1 = {https://www.sciencedirect.com/science/article/pii/S030441492400098X},
	bdsk-url-2 = {https://doi.org/10.1016/j.spa.2024.104392}}

@article{Leonenko14,
	author = {Leonenko, Nikolai and Meerschaert, Mark and Schilling, Ren{\'e} and Sikorskii, Alla},
	date-added = {2024-10-04 13:31:10 +0200},
	date-modified = {2024-10-04 13:31:20 +0200},
	doi = {10.1685/journal.caim.483},
	journal = {Communications in Applied and Industrial Mathematics},
	month = {10},
	title = {Correlation Structure of Time-Changed L{\'e}vy Processes},
	volume = {6},
	year = {2014},
	bdsk-url-1 = {https://doi.org/10.1685/journal.caim.483}}

@book{Widder46,
	author = {David V. Widder},
	date-added = {2024-09-27 16:02:09 +0200},
	date-modified = {2024-09-27 16:03:37 +0200},
	publisher = {Princeton Mathematical Series},
	title = {The {L}aplace Transform},
	year = {1946}}

@book{Perko13,
	author = {Perko, Lawrence},
	date-added = {2024-09-27 16:00:37 +0200},
	date-modified = {2024-09-27 16:00:48 +0200},
	publisher = {Springer Science \& Business Media},
	title = {Differential equations and dynamical systems},
	volume = {7},
	year = {2013}}

@book{Higham08,
	author = {Higham, NJ},
	date-added = {2024-09-27 15:59:22 +0200},
	date-modified = {2024-09-27 15:59:39 +0200},
	publisher = {SIAM},
	title = {Functions of Matrices: Theory and Computation},
	year = {2008}}

@misc{Guida21,
	archiveprefix = {arXiv},
	author = {Christa Cuchiero and Francesco Guida and Luca di Persio and Sara Svaluto-Ferro},
	date-added = {2024-08-22 12:58:57 +0200},
	date-modified = {2024-08-22 12:59:06 +0200},
	eprint = {2112.15129},
	primaryclass = {math.PR},
	title = {Measure-valued affine and polynomial diffusions},
	url = {https://arxiv.org/abs/2112.15129},
	year = {2021},
	bdsk-url-1 = {https://arxiv.org/abs/2112.15129}}

@article{Bingham71,
	author = {Bingham, N. H.},
	date = {1971/03/01},
	date-added = {2024-07-25 15:16:08 +0200},
	date-modified = {2024-07-25 15:16:16 +0200},
	doi = {10.1007/BF00538470},
	id = {Bingham1971},
	isbn = {1432-2064},
	journal = {Zeitschrift f{\"u}r Wahrscheinlichkeitstheorie und Verwandte Gebiete},
	number = {1},
	pages = {1--22},
	title = {Limit theorems for occupation times of Markov processes},
	url = {https://doi.org/10.1007/BF00538470},
	volume = {17},
	year = {1971},
	bdsk-url-1 = {https://doi.org/10.1007/BF00538470}}

@misc{Nane11,
	archiveprefix = {arXiv},
	author = {Mark M. Meerschaert and Erkan Nane and P. Vellaisamy},
	date-added = {2024-06-14 12:50:58 +0200},
	date-modified = {2024-06-14 15:23:48 +0200},
	eprint = {1007.5051},
	title = {The fractional Poisson process and the inverse stable subordinator},
	year = {2011}}

@book{Kolokoltsov11,
	address = {Berlin, New York},
	author = {Vassili N. Kolokoltsov},
	date-added = {2024-06-11 09:58:48 +0200},
	date-modified = {2024-06-11 09:58:55 +0200},
	doi = {doi:10.1515/9783110250114},
	isbn = {9783110250114},
	lastchecked = {2024-06-11},
	publisher = {De Gruyter},
	title = {Markov Processes, Semigroups and Generators},
	url = {https://doi.org/10.1515/9783110250114},
	year = {2011},
	bdsk-url-1 = {https://doi.org/10.1515/9783110250114}}

@techreport{Sorensen07,
	abstract = {The Pearson diffusions is a flexible class of diffusions defined by having linear drift and quadratic squared diffusion coefficient. It is demonstrated that for this class explicit statistical inference is feasible. Explicit optimal martingale estimating func- tions are found, and the corresponding estimators are shown to be consistent and asymptotically normal. The discussion covers GMM, quasi-likelihood, and non- linear weighted least squares estimation too, and it is discussed how explicit likeli- hood or approximate likelihood inference is possible for the Pearson diffusions. A complete model classification is presented for the ergodic Pearson diffusions. The class of stationary distributions equals the full Pearson system of distributions. Well-known instances are the Ornstein-Uhlenbeck processes and the square root (CIR) processes. Also diffusions with heavy-tailed and skew marginals are included. Special attention is given to a skew t-type distribution. Explicit formulae for the conditional moments and the polynomial eigenfunctions are derived. The analyti- cal tractability is inherited by transformed Pearson diffusions, integrated Pearson diffusions, sums of Pearson diffusions, and stochastic volatility models with Pearson volatility process. For the non-Markov models explicit optimal prediction based estimating functions are found and shown to yield consistent and asymptotically normal estimators.},
	author = {Michael Sorensen and Julie Lyng Forman},
	date-added = {2024-06-10 12:17:07 +0200},
	date-modified = {2024-06-10 12:30:22 +0200},
	institution = {Department of Economics and Business Economics, Aarhus University},
	keywords = {eigenfunction; ergodic diffusion; integrated diffusion; martingale estimating function; likelihood i},
	month = Sep,
	number = {2007-28},
	title = {{The Pearson diffusions: A class of statistically tractable diffusion processes}},
	type = {CREATES Research Papers},
	url = {https://ideas.repec.org/p/aah/create/2007-28.html},
	year = 2007,
	bdsk-url-1 = {https://ideas.repec.org/p/aah/create/2007-28.html}}

@book{Jacod03,
	added-at = {2009-09-15T18:15:02.000+0200},
	author = {Jacob, J. and Shiryayev, A. N.},
	biburl = {https://www.bibsonomy.org/bibtex/2d64e1701aa055627e21a3d67dcd77f8b/tipanverella},
	date-added = {2024-06-05 12:19:00 +0200},
	date-modified = {2024-06-05 12:19:08 +0200},
	file = {:/home/tiparis/Documents/MenLi/Readings/bJacodShiryaev2002.djvu:Djvu},
	interhash = {a32bf8446da227244b1aa756ac6a1394},
	intrahash = {d64e1701aa055627e21a3d67dcd77f8b},
	keywords = {Stochastic-Processes},
	publisher = {Springer},
	timestamp = {2009-09-16T04:10:59.000+0200},
	title = {Limit Theorems for Stochastic Processes},
	year = 2002}

@article{Gourieroux06,
	abstract = {We introduce the multivariate Jacobi process as a representation for the dynamics of a stochastic discrete probability distribution. Its domain of application is dynamic analysis of switching regimes in asset return volatility, business cycle and corporate credit ratings. The paper shows how the multivariate Jacobi process is derived from the multivariate Cox--Ingersoll--Ross (CIR) model by time deformation and presents the main distributional properties. For illustration, selected continuous time models of prices and returns on financial assets are extended to smooth transitions processes featuring regimes of different volatilities and persistence. In this framework the effects of transitions between the regimes on derivative prices and long memory are examined.},
	author = {Christian Gourieroux and Joann Jasiak},
	date-added = {2024-05-29 08:11:46 +0200},
	date-modified = {2024-05-29 08:12:06 +0200},
	doi = {https://doi.org/10.1016/j.jeconom.2005.01.014},
	issn = {0304-4076},
	journal = {Journal of Econometrics},
	keywords = {Jacobi process, Regime switching, Stochastic volatility, Contingency table, Credit risk},
	number = {1},
	pages = {475-505},
	title = {Multivariate Jacobi process with application to smooth transitions},
	url = {https://www.sciencedirect.com/science/article/pii/S0304407605000199},
	volume = {131},
	year = {2006},
	bdsk-url-1 = {https://www.sciencedirect.com/science/article/pii/S0304407605000199},
	bdsk-url-2 = {https://doi.org/10.1016/j.jeconom.2005.01.014}}

@article{Engel01,
	author = {Engel, Klaus-Jochen and Nagel, Rainer},
	date = {2001/06/01},
	date-added = {2024-01-17 09:56:52 +0100},
	date-modified = {2024-01-17 09:57:00 +0100},
	doi = {10.1007/s002330010042},
	id = {Engel2001},
	isbn = {1432-2137},
	journal = {Semigroup Forum},
	number = {2},
	pages = {278--280},
	title = {One-parameter semigroups for linear evolution equations},
	url = {https://doi.org/10.1007/s002330010042},
	volume = {63},
	year = {2001},
	bdsk-url-1 = {https://doi.org/10.1007/s002330010042}}

@article{Cuchiero21,
	abstract = {We introduce polynomial processes taking values in an arbitrary Banach space {\$}{\{}B{\}}{\$}via their infinitesimal generator {\$}L{\$}and the associated martingale problem. We obtain two representations of the (conditional) moments in terms of solutions of a system of ODEs on the truncated tensor algebra of dual respectively bidual spaces. We illustrate how the well-known moment formulas for finite-dimensional or probability-measure-valued polynomial processes can be deduced in this general framework. As an application, we consider polynomial forward variance curve models which appear in particular as Markovian lifts of (rough) Bergomi-type volatility models. Moreover, we show that the signature process of a {\$}d{\$}-dimensional Brownian motion is polynomial and derive its expected value via the polynomial approach.},
	author = {Cuchiero, Christa and Svaluto-Ferro, Sara},
	date = {2021/04/01},
	date-added = {2024-01-02 14:46:44 +0100},
	date-modified = {2024-01-02 14:46:53 +0100},
	doi = {10.1007/s00780-021-00450-x},
	id = {Cuchiero2021},
	isbn = {1432-1122},
	journal = {Finance and Stochastics},
	number = {2},
	pages = {383--426},
	title = {Infinite-dimensional polynomial processes},
	url = {https://doi.org/10.1007/s00780-021-00450-x},
	volume = {25},
	year = {2021},
	bdsk-url-1 = {https://doi.org/10.1007/s00780-021-00450-x}}

@article{Kobayashi2011,
	author = {Kobayashi, Kei},
	journal = {Journal of Theoretical Probability},
	number = {3},
	pages = {789--820},
	publisher = {Springer},
	title = {Stochastic calculus for a time-changed semimartingale and the associated stochastic differential equations},
	volume = {24},
	year = {2011}}

@misc{NIST:DLMF,
	howpublished = {\url{https://dlmf.nist.gov/}, Release 1.2.4 of 2025-03-15},
	key = {{\relax DLMF}},
	note = {F.~W.~J. Olver, A.~B. {Olde Daalhuis}, D.~W. Lozier, B.~I. Schneider, R.~F. Boisvert, C.~W. Clark, B.~R. Miller, B.~V. Saunders, H.~S. Cohl, and M.~A. McClain, eds.},
	title = {{\it NIST Digital Library of Mathematical Functions}},
	url = {https://dlmf.nist.gov/},
	bdsk-url-1 = {https://dlmf.nist.gov/}}

@article{Chen17,
	abstract = {In this paper, we study the existence and uniqueness of solutions for general fractional-time parabolic equations of mixture type, and their probabilistic representations in terms of the corresponding inverse subordinators with or without drifts. An explicit relation between occupation measure for Markov processes time-changed by inverse subordinator in open sets and that of the original Markov process in the open set is also given.},
	author = {Chen, Zhen-Qing},
	date = {2017/09/01/},
	date-added = {2023-12-07 13:35:15 +0100},
	date-modified = {2023-12-07 13:35:22 +0100},
	doi = {https://doi.org/10.1016/j.chaos.2017.04.029},
	isbn = {0960-0779},
	journal = {Chaos, Solitons \& Fractals},
	journal1 = {Future Directions in Fractional Calculus Research and Applications},
	keywords = {Fractional-time derivative; Subordinator; Inverse subordinator; L{\'e}vy measure; Occupation measure},
	pages = {168--174},
	title = {Time fractional equations and probabilistic representation},
	url = {https://www.sciencedirect.com/science/article/pii/S0960077917301649},
	volume = {102},
	year = {2017},
	bdsk-url-1 = {https://www.sciencedirect.com/science/article/pii/S0960077917301649},
	bdsk-url-2 = {https://doi.org/10.1016/j.chaos.2017.04.029}}

@book{Gripenberg90,
	abstract = {The rapid development of the theories of Volterra integral and functional equations has been strongly promoted by their applications in physics, engineering and biology. This text shows that the theory of Volterra equations exhibits a rich variety of features not present in the theory of ordinary differential equations. The book is divided into three parts. The first considers linear theory and the second deals with quasilinear equations and existence problems for nonlinear equations, giving some general asymptotic results. Part III is devoted to frequency domain methods in the study of nonlinear equations. The entire text analyses n-dimensional rather than scalar equations, giving greater generality and wider applicability and facilitating generalizations to infinite-dimensional spaces. The book is generally self-contained and assumes only a basic knowledge of analysis. The many exercises illustrate the development of the theory and its applications, making this book accessible to researchers in all areas of integral and differential equations.},
	address = {Cambridge},
	author = {Gripenberg, G. and Londen, S. O. and Staffans, O.},
	date-added = {2023-12-07 10:32:28 +0100},
	date-modified = {2023-12-07 10:32:40 +0100},
	db = {Cambridge Core},
	doi = {DOI: 10.1017/CBO9780511662805},
	dp = {Cambridge University Press},
	isbn = {9780521372893},
	publisher = {Cambridge University Press},
	series = {Encyclopedia of Mathematics and its Applications},
	title = {Volterra Integral and Functional Equations},
	url = {https://www.cambridge.org/core/product/2A26C67AFCE028453C046D0DBFDE418C},
	year = {1990},
	bdsk-url-1 = {https://www.cambridge.org/core/product/2A26C67AFCE028453C046D0DBFDE418C},
	bdsk-url-2 = {https://doi.org/10.1017/CBO9780511662805}}

@article{Filipovic19,
	abstract = {We study discretizations of polynomial processes using finite state Markov processes satisfying suitable moment matching conditions. The states of these Markov processes together with their transition probabilities can be interpreted as Markov cubature rules. The polynomial property allows us to study such rules using algebraic techniques. Markov cubature rules aid the tractability of path-dependent tasks such as American option pricing in models where the underlying factors are polynomial processes.},
	author = {Damir Filipovi{\'c} and Martin Larsson and Sergio Pulido},
	date-added = {2023-11-15 10:29:12 +0100},
	date-modified = {2023-11-15 10:29:44 +0100},
	doi = {https://doi.org/10.1016/j.spa.2019.06.010},
	issn = {0304-4149},
	journal = {Stochastic Processes and their Applications},
	keywords = {Polynomial process, Cubature rule, Asymptotic moments, Transition rate matrix, Transition probabilities, American options},
	number = {4},
	pages = {1947-1971},
	title = {Markov cubature rules for polynomial processes},
	url = {https://www.sciencedirect.com/science/article/pii/S0304414919303850},
	volume = {130},
	year = {2020},
	bdsk-url-1 = {https://www.sciencedirect.com/science/article/pii/S0304414919303850},
	bdsk-url-2 = {https://doi.org/10.1016/j.spa.2019.06.010}}

@article{Filipovic16,
	author = {Filipovi{\'c}, Damir and Larsson, Martin},
	date-modified = {2026-06-22 12:28:02 +0200},
	journal = {Finance and Stochastics},
	number = {4},
	pages = {931--972},
	publisher = {Springer},
	title = {Polynomial diffusions and applications in finance},
	volume = {20},
	year = {2016}}

@article{Leonenko13,
	abstract = {The stochastic solution to a diffusion equations with polynomial coefficients is called a Pearson diffusion. If the first time derivative is replaced by a Caputo fractional derivative of order less than one, the stochastic solution is called a fractional Pearson diffusion. This paper develops an explicit formula for the covariance function of a fractional Pearson diffusion in steady state, in terms of Mittag-Leffler functions. That formula shows that fractional Pearson diffusions are long-range dependent, with a correlation that falls off like a power law, whose exponent equals the order of the fractional derivative.},
	author = {Nikolai N. Leonenko and Mark M. Meerschaert and Alla Sikorskii},
	date-added = {2023-09-22 13:16:08 +0200},
	date-modified = {2023-09-22 13:16:17 +0200},
	doi = {https://doi.org/10.1016/j.camwa.2013.01.009},
	issn = {0898-1221},
	journal = {Computers \& Mathematics with Applications},
	keywords = {Pearson diffusion, Fractional derivative, Correlation function, Mittag-Leffler function},
	note = {Fractional Differentiation and its Applications},
	number = {5},
	pages = {737-745},
	title = {Correlation structure of fractional Pearson diffusions},
	url = {https://www.sciencedirect.com/science/article/pii/S0898122113000266},
	volume = {66},
	year = {2013},
	bdsk-url-1 = {https://www.sciencedirect.com/science/article/pii/S0898122113000266},
	bdsk-url-2 = {https://doi.org/10.1016/j.camwa.2013.01.009}}

@inbook{Cox15,
	abstract = {In Chapter 1, we have seen how the algebra of the polynomial rings k[x1,{\thinspace}{\ldots},{\thinspace}xn] and the geometry of affine algebraic varieties are linked. In this chapter, we will study the method of Gr{\"o}bner bases, which will allow us to solve problems about polynomial ideals in an algorithmic or computational fashion. The method of Gr{\"o}bner bases is also used in several powerful computer algebra systems to study specific polynomial ideals that arise in applications. In Chapter 1, we posed many problems concerning the algebra of polynomial ideals and the geometry of affine varieties. In this chapter and the next, we will focus on four of these problems.},
	address = {Cham},
	author = {Cox, David A. and Little, John and O'Shea, Donal},
	booktitle = {Ideals, Varieties, and Algorithms: An Introduction to Computational Algebraic Geometry and Commutative Algebra},
	date-added = {2023-09-21 10:29:58 +0200},
	date-modified = {2023-09-21 10:30:07 +0200},
	doi = {10.1007/978-3-319-16721-3_2},
	isbn = {978-3-319-16721-3},
	pages = {49--119},
	publisher = {Springer International Publishing},
	title = {Gr{\"o}bner Bases},
	url = {https://doi.org/10.1007/978-3-319-16721-3_2},
	year = {2015},
	bdsk-url-1 = {https://doi.org/10.1007/978-3-319-16721-3_2}}

@book{Feller71,
	added-at = {2009-03-19T17:13:29.000+0100},
	address = {New York},
	author = {Feller, William},
	biburl = {https://www.bibsonomy.org/bibtex/2780e5c7a7c4d5e1c30f5e8128c6cca34/peter.ralph},
	date-added = {2023-08-15 15:10:53 +0200},
	date-modified = {2023-08-15 15:10:59 +0200},
	description = {Feller volume 2},
	interhash = {1cb77c0723c4f6777267e0becc521356},
	intrahash = {780e5c7a7c4d5e1c30f5e8128c6cca34},
	keywords = {feller reference},
	mrclass = {60.00},
	mrnumber = {MR0270403 (42 \#5292)},
	pages = {xxiv+669},
	publisher = {John Wiley \& Sons Inc.},
	series = {Second edition},
	timestamp = {2009-03-19T17:13:29.000+0100},
	title = {An introduction to probability theory and its applications. {V}ol. {II}.},
	year = 1971}

@techreport{Filipovic17,
	abstract = {We develop a comprehensive mathematical framework for polynomial jump-diffusions, which nest affine jump-diffusions and have broad applications in finance. We show that the polynomial property is preserved under exponentiation and subordination. We present a generic method for option pricing based on moment expansions. As an application, we introduce a large class of novel financial asset pricing models that are based on polynomial jump-diffusions.},
	author = {Damir Filipovi{\'c} and Martin Larsson},
	date-added = {2023-07-27 16:50:40 +0200},
	date-modified = {2023-07-27 16:50:52 +0200},
	institution = {Swiss Finance Institute},
	keywords = {polynomial jump-diffusions; affine jump-diffusions; exponentiation; subordination; asset pricing mod},
	month = Nov,
	number = {17-60},
	title = {{Polynomial Jump-Diffusion Models}},
	type = {Swiss Finance Institute Research Paper Series},
	url = {https://ideas.repec.org/p/chf/rpseri/rp1760.html},
	year = 2017,
	bdsk-url-1 = {https://ideas.repec.org/p/chf/rpseri/rp1760.html}}

@inbook{Bertoin99,
	abstract = {0. Foreword1. Elements on subordinators1.1. Definitions and first properties1.2. The L{\'e}vy-Khintchine formula1.3. The renewal measure1.4. The range of a subordinator2. Regenerative property2.1. Regenerative sets2.2. Connection with Markov processes3. Asymptotic behaviour of last passage times3.1. Asymptotic behaviour in distribution3.1.1. The self-similar case3.1.2. The Dynkin-Lamperti theorem3.2. Asymptotic sample path behaviour4. Rates of growth of local time4.1. Law of the iterated logarithm4.2. Modulus of continuity5. Geometric properties of regenerative sets5.1. Fractal dimensions5.1.1. Box-counting dimension5.1.2. Hausdorff and packing dimensions5.2. Intersections with a regenerative set5.2.1. Equilibrium measure and capacity5.2.2. Dimension criteria5.2.3. Intersection of independant regenerative sets6. Burgers equation with Brownian initial velocity6.1. Burgers equation and the Hopf-Cole Solution6.2. Brownian initial velocity6.3. Proof of the theorem7. Random covering7.1. Setting7.2. The Laplace exponent of the uncovered set7.3. Some properties of the uncovered set8. L{\'e}vy processes8.1. Local time at a fixed point8.2. Local time at the supremum8.3. The spectrally negative case8.4. Bochner's subordination for L{\'e}vy processes9. Occupation times of a linear Brownian motion9.1. Occupation times and subordinators9.2. L{\'e}vy measure and Laplace exponent9.2.1. L{\'e}vy measure via excursion theory9.2.2. Laplace exponent via the Sturm-Liouville equation9.2.3. Spectral representation of the Laplace exponent9.3. The zero set of a one-dimensional diffusionReferences},
	address = {Berlin, Heidelberg},
	author = {Bertoin, Jean},
	booktitle = {Lectures on Probability Theory and Statistics: Ecole d'Et{\'e} de Probailit{\'e}s de Saint-Flour XXVII - 1997},
	date-added = {2023-06-13 14:51:17 +0200},
	date-modified = {2023-06-13 14:51:25 +0200},
	doi = {10.1007/978-3-540-48115-7_1},
	editor = {Bernard, Pierre},
	isbn = {978-3-540-48115-7},
	pages = {1--91},
	publisher = {Springer Berlin Heidelberg},
	title = {Subordinators: Examples and Applications},
	url = {https://doi.org/10.1007/978-3-540-48115-7_1},
	year = {1999},
	bdsk-url-1 = {https://doi.org/10.1007/978-3-540-48115-7_1}}

@book{Schilling17,
	address = {Cambridge},
	author = {Schilling, Ren{\'e}L.},
	date-added = {2023-06-09 17:29:57 +0200},
	date-modified = {2023-06-09 17:30:04 +0200},
	edition = {Second edition},
	isbn = {9781316620243; 1316620247},
	la = {eng},
	lk = {https://worldcat.org/title/989971920},
	publisher = {Cambridge University Press},
	title = {Measures, integrals and martingales},
	year = {2017}}

@book{Sikorskii12,
	address = {Berlin, Boston},
	author = {Mark M. Meerschaert and Alla Sikorskii},
	date-added = {2023-06-02 14:28:31 +0200},
	date-modified = {2023-06-02 14:28:45 +0200},
	doi = {doi:10.1515/9783110258165},
	isbn = {9783110258165},
	lastchecked = {2023-06-02},
	publisher = {De Gruyter},
	title = {Stochastic Models for Fractional Calculus},
	url = {https://doi.org/10.1515/9783110258165},
	year = {2012},
	bdsk-url-1 = {https://doi.org/10.1515/9783110258165}}

@book{Kilbas06,
	address = {Amsterdam},
	author = {Kilbas, A. A. and Srivastava, H. M. and Trujillo, Juan J.},
	date-added = {2023-05-29 17:38:34 +0200},
	date-modified = {2023-05-29 17:38:41 +0200},
	edition = {1st ed},
	isbn = {9780444518323; 0444518320; 0080462073; 9780080462073},
	la = {eng},
	lk = {https://worldcat.org/title/162586541},
	publisher = {Elsevier},
	series = {North-Holland mathematics studies},
	title = {Theory and applications of fractional differential equations},
	year = {2006},
	bdsk-file-1 = {YnBsaXN0MDDSAQIDBFxyZWxhdGl2ZVBhdGhYYm9va21hcmtfECwuLi8uLi8uLi8uLi9Eb3dubG9hZHMvY2l0YXRpb24tMjY1MjMxNzE2LmJpYk8RA+Bib29r4AMAAAAABBAwAAAAAAAAAAAAAAAAAAAAAAAAAAAAAAAAAAAAAAAAAAAAAADQAgAABQAAAAEBAABVc2VycwAAAAUAAAABAQAAamRhd2cAAAAJAAAAAQEAAERvd25sb2FkcwAAABYAAAABAQAAY2l0YXRpb24tMjY1MjMxNzE2LmJpYgAAEAAAAAEGAAAEAAAAFAAAACQAAAA4AAAACAAAAAQDAADFOAAAAAAAAAgAAAAEAwAAHCMGAAAAAAAIAAAABAMAAE8jBgAAAAAACAAAAAQDAADd7y8AAAAAABAAAAABBgAAcAAAAIAAAACQAAAAoAAAAAgAAAAABAAAQcUZERyouN0YAAAAAQIAAAEAAAAAAAAADwAAAAAAAAAAAAAAAAAAAAgAAAAEAwAAAgAAAAAAAAAEAAAAAwMAAPUBAAAIAAAAAQkAAGZpbGU6Ly8vDAAAAAEBAABNYWNpbnRvc2ggSEQIAAAABAMAAABQoRtzAAAACAAAAAAEAABBxuOahQAAACQAAAABAQAAQUMzQzdFQjEtMDRCQS00MjRELUFDRkUtNzFFRThGMjhENUREGAAAAAECAACBAAAAAQAAAO8TAAABAAAAAAAAAAAAAAABAAAAAQEAAC8AAAAAAAAAAQUAABoAAAABAQAATlNVUkxEb2N1bWVudElkZW50aWZpZXJLZXkAAAQAAAADAwAAoBIAAN8AAAABAgAAZTFkMmVjNTNlNTU0YjMyZWJiM2JkZGYxNTY1ODg3NDZlMmE2MGEzNGUyOWY3M2YwZDE5ZWU5MDkxZDFlNWM4NzswMDswMDAwMDAwMDswMDAwMDAwMDswMDAwMDAwMDswMDAwMDAwMDAwMDAwMDIwO2NvbS5hcHBsZS5hcHAtc2FuZGJveC5yZWFkLXdyaXRlOzAxOzAxMDAwMDEwOzAwMDAwMDAwMDAyZmVmZGQ7NTQ7L3VzZXJzL2pkYXdnL2Rvd25sb2Fkcy9jaXRhdGlvbi0yNjUyMzE3MTYuYmliAADYAAAA/v///wEAAAAAAAAAEQAAAAQQAABYAAAAAAAAAAUQAACwAAAAAAAAABAQAADYAAAAAAAAAEAQAADIAAAAAAAAAAIgAACkAQAAAAAAAAUgAAAUAQAAAAAAABAgAAAkAQAAAAAAABEgAABYAQAAAAAAABIgAAA4AQAAAAAAABMgAABIAQAAAAAAACAgAACEAQAAAAAAADAgAACwAQAAAAAAAAHAAAD4AAAAAAAAABHAAAAUAAAAAAAAABLAAAAIAQAAAAAAAIDwAADoAQAAAAAAALgBAIDcAQAAAAAAAAAIAA0AGgAjAFIAAAAAAAACAQAAAAAAAAAFAAAAAAAAAAAAAAAAAAAENg==},
	bdsk-file-2 = {YnBsaXN0MDDSAQIDBFxyZWxhdGl2ZVBhdGhZYWxpYXNEYXRhXxApLi4vLi4vLi4vRG93bmxvYWRzL2NpdGF0aW9uLTMwMzM1MTY1MC5iaWJPEQFmAAAAAAFmAAIAAAxNYWNpbnRvc2ggSEQAAAAAAAAAAAAAAAAAAADgTiyaQkQAAf////8WY2l0YXRpb24tMzAzMzUxNjUwLmJpYgAAAAAAAAAAAAAAAAAAAAAAAAAAAAAAAAAAAAAAAAAAAAAAAAAAAAAA/////+C/WOAAAAAAAAAAAAADAAIAAAogY3UAAAAAAAAAAAAAAAAACURvd25sb2FkcwAAAgAuLzpVc2Vyczphc3NlZjpEb3dubG9hZHM6Y2l0YXRpb24tMzAzMzUxNjUwLmJpYgAOAC4AFgBjAGkAdABhAHQAaQBvAG4ALQAzADAAMwAzADUAMQA2ADUAMAAuAGIAaQBiAA8AGgAMAE0AYQBjAGkAbgB0AG8AcwBoACAASABEABIALFVzZXJzL2Fzc2VmL0Rvd25sb2Fkcy9jaXRhdGlvbi0zMDMzNTE2NTAuYmliABMAAS8AABUAAgAM//8AAAAIAA0AGgAkAFAAAAAAAAACAQAAAAAAAAAFAAAAAAAAAAAAAAAAAAABug==}}

@book{Schilling10,
	address = {Berlin, New York},
	author = {Ren{\'e} L. Schilling and Renming Song and Zoran Vondracek},
	date-added = {2023-05-25 20:15:27 +0200},
	date-modified = {2023-05-30 15:50:35 +0200},
	doi = {doi:10.1515/9783110215311},
	isbn = {9783110215311},
	lastchecked = {2023-05-25},
	publisher = {De Gruyter},
	title = {Bernstein functions},
	url = {https://doi.org/10.1515/9783110215311},
	year = {2010},
	bdsk-url-1 = {https://doi.org/10.1515/9783110215311}}

@article{Garrappa18,
	abstract = {The computation of the Mittag-Leffler (ML) function with matrix arguments, and some applications in fractional calculus, are discussed. In general the evaluation of a scalar function in matrix arguments may require the computation of derivatives of possible high order depending on the matrix spectrum. Regarding the ML function, the numerical computation of its derivatives of arbitrary order is a completely unexplored topic; in this paper we address this issue and three different methods are tailored and investigated. The methods are combined together with an original derivatives balancing technique in order to devise an algorithm capable of providing high accuracy. The conditioning of the evaluation of matrix ML functions is also studied. The numerical experiments presented in the paper show that the proposed algorithm provides high accuracy, very often close to the machine precision.},
	author = {Garrappa, Roberto and Popolizio, Marina},
	date = {2018/10/01},
	date-added = {2023-05-17 17:57:35 +0200},
	date-modified = {2023-05-17 17:57:44 +0200},
	doi = {10.1007/s10915-018-0699-5},
	id = {Garrappa2018},
	isbn = {1573-7691},
	journal = {Journal of Scientific Computing},
	number = {1},
	pages = {129--153},
	title = {Computing the Matrix Mittag-Leffler Function with Applications to Fractional Calculus},
	url = {https://doi.org/10.1007/s10915-018-0699-5},
	volume = {77},
	year = {2018},
	bdsk-url-1 = {https://doi.org/10.1007/s10915-018-0699-5}}

@article{Cuchiero12,
	abstract = {We introduce a class of Markov processes, called m-polynomial, for which the calculation of (mixed) moments up to order m only requires the computation of matrix exponentials. This class contains affine processes, processes with quadratic diffusion coefficients, as well as L{\'e}vy-driven SDEs with affine vector fields. Thus, many popular models such as exponential L{\'e}vy models or affine models are covered by this setting. The applications range from statistical GMM estimation procedures to new techniques for option pricing and hedging. For instance, the efficient and easy computation of moments can be used for variance reduction techniques in Monte Carlo methods.},
	author = {Cuchiero, Christa and Keller-Ressel, Martin and Teichmann, Josef},
	date = {2012/10/01},
	date-added = {2023-05-16 11:06:19 +0200},
	date-modified = {2023-05-16 11:06:25 +0200},
	doi = {10.1007/s00780-012-0188-x},
	id = {Cuchiero2012},
	isbn = {1432-1122},
	journal = {Finance and Stochastics},
	number = {4},
	pages = {711--740},
	title = {Polynomial processes and their applications to mathematical finance},
	url = {https://doi.org/10.1007/s00780-012-0188-x},
	volume = {16},
	year = {2012},
	bdsk-url-1 = {https://doi.org/10.1007/s00780-012-0188-x}}

@article{Toaldo15,
	abstract = {This paper takes under consideration subordinators and their inverse processes (hitting-times). The governing equations of such processes are presented by means of convolution-type integro-differential operators similar to the fractional derivatives. Furthermore the concept of time-changed C0-semigroup is discussed in case the time-change is performed by means of the hitting-time of a subordinator. Such time-change gives rise to bounded linear operators governed by integro-differential time-operators. Because these operators are non-local the presence of long-range dependence is investigated.},
	author = {Toaldo, Bruno},
	date = {2015/01/01},
	date-added = {2023-05-15 15:30:04 +0200},
	date-modified = {2023-05-15 15:30:10 +0200},
	doi = {10.1007/s11118-014-9426-5},
	id = {Toaldo2015},
	isbn = {1572-929X},
	journal = {Potential Analysis},
	number = {1},
	pages = {115--140},
	title = {Convolution-Type Derivatives, Hitting-Times of Subordinators and Time-Changed C0-semigroups},
	url = {https://doi.org/10.1007/s11118-014-9426-5},
	volume = {42},
	year = {2015},
	bdsk-url-1 = {https://doi.org/10.1007/s11118-014-9426-5}}

@article{Orsingher18,
	abstract = {Semi-Markov processes are a generalization of Markov processes since the exponential distribution of time intervals is replaced with an arbitrary distribution. This paper provides an integro-differential form of the Kolmogorov's backward equations for a large class of homogeneous semi-Markov processes, having the form of an abstract Volterra integro-differential equation. An equivalent evolutionary (differential) form of the equations is also provided. Fractional equations in the time variable are a particular case of our analysis. Weak limits of semi-Markov processes are also considered and their corresponding integro-differential Kolmogorov's equations are identified.},
	author = {Enzo Orsingher and Costantino Ricciuti and Bruno Toaldo},
	date-added = {2023-05-15 14:56:21 +0200},
	date-modified = {2023-05-15 14:56:31 +0200},
	doi = {https://doi.org/10.1016/j.jfa.2018.02.011},
	issn = {0022-1236},
	journal = {Journal of Functional Analysis},
	keywords = {Semi-Markov processes, Time-changed processes, Integro-differential equations, Fractional equations},
	number = {4},
	pages = {830-868},
	title = {On semi-Markov processes and their Kolmogorov's integro-differential equations},
	url = {https://www.sciencedirect.com/science/article/pii/S002212361830079X},
	volume = {275},
	year = {2018},
	bdsk-url-1 = {https://www.sciencedirect.com/science/article/pii/S002212361830079X},
	bdsk-url-2 = {https://doi.org/10.1016/j.jfa.2018.02.011}}

@article{Meerschaert13,
	author = {{Meerschaert, M. M.} and {Straka, P.}},
	date-added = {2023-05-15 10:43:28 +0200},
	date-modified = {2023-05-15 10:43:39 +0200},
	doi = {10.1051/mmnp/20138201},
	journal = {Math. Model. Nat. Phenom.},
	number = 2,
	pages = {1-16},
	title = {Inverse Stable Subordinators},
	url = {https://doi.org/10.1051/mmnp/20138201},
	volume = 8,
	year = 2013,
	bdsk-url-1 = {https://doi.org/10.1051/mmnp/20138201}}

@article{Meerschaert04,
	author = {Meerschaert, Mark and Scheffler, Hans-Peter},
	date-added = {2023-05-15 10:26:25 +0200},
	date-modified = {2023-05-15 10:44:17 +0200},
	doi = {10.1017/S002190020002043X},
	journal = {Journal of Applied Probability},
	month = {09},
	pages = {623-638},
	title = {Limit theorems for continuous-time random walks with infinite mean waiting times},
	volume = {41},
	year = {2004},
	bdsk-url-1 = {https://doi.org/10.1017/S002190020002043X}}

\end{document}